\newtheorem{theorem}{Theorem}[section]
\newtheorem{lemma}[theorem]{Lemma}
\newtheorem{proposition}[theorem]{Proposition}
\theoremstyle{definition}
\newtheorem{definition}[theorem]{Definition}
\newtheorem{example}[theorem]{Example}
\theoremstyle{remark}
\newtheorem{remark}[theorem]{Remark}
\numberwithin{equation}{section}
\newcommand{\cE}{\mathcal{E}}
\newcommand{\cN}{\mathcal{N}}
\newcommand{\cB}{\mathcal{B}}
\newcommand{\cP}{\mathcal{P}}
\newcommand{\bN}{\mathbb{N}}
\newcommand{\R}{\mathbb{R}}
\newcommand{\si}{\sigma }
\newcommand{\ga}{\gamma }
\newcommand{\Ga}{\Gamma }
\newcommand{\ones}{\mathbbm{1}}
\newcommand{\one}{\mathbf{1}}
\newcommand{\Mod}{\operatorname{Mod}}
\newcommand{\Dom}{\operatorname{Dom}}
\newcommand{\Adm}{\operatorname{Adm}}
\newcommand{\Ext}{\operatorname{Ext}}
\newcommand{\co}{\operatorname{conv}}
\newcommand{\bi}{\begin{itemize}}
	\newcommand{\ei}{\end{itemize}}
\newcommand{\BL}{\operatorname{BL}}
\newcommand{\lbr}{\left\{ }
\newcommand{\rbr}{\right\} }
\numberwithin{equation}{section}
\newcommand{\beqa}{\begin{eqnarray}}
	\newcommand{\eeqa}{\end{eqnarray}}
\newcommand{\beqan}{\begin{eqnarray*}}
	\newcommand{\eeqan}{\end{eqnarray*}}
\newcommand{\beq}{\begin{equation}}
	\newcommand{\eeq}{\end{equation}}
\newcommand{\beqn}{\begin{equation*}}
	\newcommand{\eeqn}{\end{equation*}}
\newcommand{\PP}{\mathscr{P}}
\newcommand{\bit}{\begin{itemize}}
	\newcommand{\eit}{\end{itemize}}
\newcommand{\Gahat}{\widehat{\Ga}}
\newcommand{\Gatil}{\widetilde{\Ga}}
\begin{document}
	
	\title{\bf Modulus of hypertrees\thanks{This material is based upon work supported by the National Science Foundation under Grant n. 2154032.}}
	\author[1]{Huy Truong}
	\author[1]{Pietro Poggi-Corradini}
	\affil[1]{\small Dept. of Mathematics, Kansas State University, Manhattan, KS 66506, USA.} 
	\date{}
	\maketitle
	\begin{abstract}
		Lorea \cite{Lo} and later Frank et al.~ \cite{Frank} generalized  graphic matroids to hypergraphic matroids. In \cite{Frank}, the authors introduced hypertrees as a generalization of spanning trees and proved  a form of  the theorem of Tutte \cite{on} and Nash-Williams \cite{edge-disjoint}. In \cite{basemodulus, pietrofairest, huyfulkerson}, the authors explored the modulus of the family of spanning trees in graphs and of the family of bases of matroids, and provided connections to the notions of strength and fractional arboricity. They also established Fulkerson duality for these families. 
		In this paper, we extend these results to hypertrees, and show that the modulus of hypertrees uncovers a hierarchical structure within arbitrary hypergraphs.
	\end{abstract}
	\noindent {\bf Keywords:} Hypergraphs, hypertrees, modulus, strength, fractional arboricity, Fulkerson duality.
	
	\vspace{0.1in}
	
	\noindent {\bf 2020 Mathematics Subject Classification:} 90C27 (Primary) ; 05B35 (Secondary).
	\section{Introduction}\label{sec:intro}
	
	The theory of blocking and anti-blocking polyhedra was introduced by 
	Fulkerson in \cite{fulkersonblocking, fulkersonanti}. This theory describes
	dual relationships between sets of constraints in optimization problems and 
	has significant applications in combinatorial optimization.  In \cite{huyfulkerson, chopraon}, it was shown that the {\it Fulkerson blocker family}, as defined in Definition \ref{def:blocker} below, of the family of spanning trees of a graph can be characterized by the family of feasible partitions of the vertex set. This result can be deduced from a theorem of Tutte 
	\cite{on} and of Nash-Williams \cite{edge-disjoint}. Furthermore, in \cite{basemodulus}, the authors extended this result to the family of bases of matroids. In this paper, we want to explore similar properties in the setting of hypergraphic matroids. 
	
	Hypergraphic matroids were studied first by Lorea \cite{Lo} and later
	by Frank et al \cite{Frank}. They can be seen as generalizations of graphic
	matroids. In \cite{Frank}, the authors generalized the notion of spanning trees to the one of hypertrees. Let $H = (V, E)$ be a hypergraph. For a non-empty set $X \subseteq V$ and a subset $F \subseteq E$, the set $F[X]$  represents the collection of hyperedges in $F$ that are entirely contained in $X$. The hypergraph $(X, E[X])$ is called the {\it subhypergraph induced by the vertex subset $X$} and is denoted by $H[X]$. 
	A set $F \subseteq E$ is called a {\it hyperforest} if
	$|F[X]| \le |X|-1$ for every non-empty subset $X \subseteq V$. A hyperforest $F$ is called a 
	{\it hypertree} of $H$ if $|F|=|V|-1$. In the special case where $H$ is a graph, $F \subseteq E$ is a hypertree if and only if $F$ is a spanning tree.

	Lorea \cite{Lo} demonstrated that the hyperforests of a hypergraph $H$ form the family of independent sets of a matroid, which is called a 
	\textit{hypergraphic matroid} and denoted by $M(H)$. 
	Later, Frank et al. \cite{Frank} continued the study of hypergraphic matroids 
	and established Theorem \ref{rank}, which is presented below. We give some notations for this theorem. Let $\PP = \{V_1, \ldots, V_k\}$ be a partition of the vertex set 
	$V$, and let $F \subseteq E$, we define $\delta_F(\PP)$ as the set of hyperedges in $F$ that intersect at least two parts in $\PP$. The notation $|\PP|$ represents the number of parts 
	in the partition $\PP$. For simplicity, we sometimes use $\delta(\PP)$ in 
	place of $\delta_E(\PP)$.
	
\begin{theorem}[\cite{Frank}]\label{rank}
	Given a hypergraph $H = (V,E)$, let $M(H)$ be the associated hypergraphic 
	matroid of $H$. For $F \subseteq E$, the rank $r(F)$ of $F$ is given by 
	the following formula:
	\[
	r(F) = \min \left\{ |V| - |\PP| + |\delta_F(\PP)| : \, \PP \text{ is a 
		partition of } V \right\}.
	\]
\end{theorem}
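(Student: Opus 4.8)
The plan is to establish the formula by proving the two inequalities separately, where the ``$\ge$'' direction is understood as: the minimum on the right-hand side is attained \emph{with equality} for a suitably chosen partition. The easy direction is ``$\le$''. Let $I \subseteq F$ be any hyperforest and let $\PP = \{V_1,\dots,V_k\}$ be any partition of $V$. Every hyperedge of $I$ either lies inside exactly one part or meets at least two parts, so $|I| = \sum_{i} |I[V_i]| + |\delta_I(\PP)|$. Since $I$ is a hyperforest, $|I[V_i]| \le |V_i| - 1$ for each $i$, and since $I \subseteq F$ we have $\delta_I(\PP) \subseteq \delta_F(\PP)$. Summing gives $|I| \le \sum_i (|V_i| - 1) + |\delta_F(\PP)| = |V| - |\PP| + |\delta_F(\PP)|$. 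Taking $I$ to be a maximum hyperforest in $F$ (so $|I| = r(F)$) and then minimizing over $\PP$ yields $r(F) \le \min_{\PP}\{|V| - |\PP| + |\delta_F(\PP)|\}$.

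The heart of the argument is the reverse inequality, and for this I would introduce the notion of a tight set. Fix a maximum hyperforest $I \subseteq F$, so $|I| = r(F)$; note that a hyperforest contains no hyperedge of size $\le 1$, since such a hyperedge inside a singleton $X$ would violate $|I[X]| \le |X|-1$. Call a non-empty $X \subseteq V$ \emph{tight} if $|I[X]| = |X| - 1$; in particular every singleton is tight. The key lemma is that the tight sets are closed under intersection and union among sets that meet: if $X, Y$ are tight and $X \cap Y \neq \emptyset$, then both $X \cap Y$ and $X \cup Y$ are tight. This follows from the set-theoretic identities $I[X] \cap I[Y] = I[X \cap Y]$ and $I[X] \cup I[Y] \subseteq I[X \cup Y]$, which give $|I[X \cup Y]| \ge |I[X]| + |I[Y]| - |I[X \cap Y]| \ge (|X|-1) + (|Y|-1) - (|X \cap Y| - 1) = |X \cup Y| - 1$; combined with the hyperforest bounds $|I[X \cup Y]| \le |X \cup Y| - 1$ and $|I[X \cap Y]| \le |X \cap Y| - 1$, every inequality in the chain is forced to be an equality. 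Consequently the tight sets through a fixed vertex have a unique maximal element, distinct maximal tight sets are disjoint, and so the maximal tight sets form a partition $\PP = \{V_1, \dots, V_k\}$ of $V$; moreover any tight set is contained in a single part of $\PP$.

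With this partition in hand I would show that $\delta_F(\PP) \subseteq I$. If some $e \in F \setminus I$ met at least two parts, then by maximality of $I$ the set $I + e$ is dependent, so there is a non-empty $X \subseteq V$ with $|(I+e)[X]| \ge |X|$; since adding one hyperedge can only increase $|I[X]|$ by at most one, this forces $|I[X]| = |X| - 1$ and $e \subseteq X$, i.e.\ $X$ is tight and contains $e$. But then $e$ lies inside a single part of $\PP$, contradicting that $e$ crosses. Hence $\delta_F(\PP) = \delta_I(\PP) \subseteq I$, and decomposing $I$ over $\PP$ with every part tight gives
\[
r(F) = |I| = \sum_{i}|I[V_i]| + |\delta_I(\PP)| = \sum_{i}(|V_i| - 1) + |\delta_F(\PP)| = |V| - |\PP| + |\delta_F(\PP)|,
\]
which is at least $\min_{\PP}\{|V| - |\PP| + |\delta_F(\PP)|\}$. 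Together with the first paragraph this proves the formula.

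I expect the tight-set lemma of the second paragraph to be the main obstacle, and it is the only place where the hypergraphic setting genuinely differs from ordinary graphs: the induced-subhypergraph operator $E[\,\cdot\,]$ is not additive over a partition, so one cannot simply mimic the graphic proof, and it is precisely the submodular-type inequality $|I[X \cup Y]| \ge |I[X]| + |I[Y]| - |I[X \cap Y]|$ interacting with the hyperforest constraints that rescues a laminar structure on tight sets. An alternative, less self-contained route would be to invoke Lorea's description of $M(H)$ as the matroid induced from the cycle matroid of the complete graph $K_V$ via the incidence ``$e$ versus the clique edges inside $e$'', apply the rank formula for induced matroids (equivalently, matroid union), and then reindex the resulting minimum over subsets $F' \subseteq F$ by the partition of $V$ into connected components of $\bigcup_{e \in F'}\binom{e}{2}$; this also recovers the stated formula, but I would present the direct argument above as the main proof.
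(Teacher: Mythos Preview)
The paper does not supply a proof of this statement; Theorem~\ref{rank} is quoted from \cite{Frank} as background, so there is no in-paper argument to compare against. Your proof is correct. The ``$\le$'' direction is routine, and the tight-set lemma driving the ``$\ge$'' direction is sound: the identities $I[X]\cap I[Y]=I[X\cap Y]$ and $I[X]\cup I[Y]\subseteq I[X\cup Y]$ hold as stated, and together with the hyperforest inequalities $|I[X\cup Y]|\le |X\cup Y|-1$ and $|I[X\cap Y]|\le |X\cap Y|-1$ they force equality throughout when $X\cap Y\neq\emptyset$. The maximal tight sets therefore partition $V$, and your maximality argument that every $e\in F\setminus I$ is trapped in a single tight set is correct, giving $\delta_F(\PP)=\delta_I(\PP)$ and the desired equality.

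The alternative route you sketch at the end---realizing $M(H)$ as the matroid induced from the cycle matroid of $K_V$ via the bipartite incidence between hyperedges and their internal pairs, and then applying the rank formula for induced matroids (equivalently Rado's theorem or matroid union)---is the standard approach in the literature and is the one taken in \cite{Frank}. That route is shorter if one is willing to quote the induced-matroid rank formula; your direct tight-set argument is more elementary and entirely self-contained, essentially redoing by hand the submodularity that the matroid-union machinery encapsulates.
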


In \cite{Frank}, the authors extended a theorem of Tutte \cite{on} and 
Nash-Williams \cite{edge-disjoint} to give the maximum number of disjoint 
hypertrees contained in a hypergraph. The result is stated as follows.

\begin{theorem}[\cite{Frank}]\label{thm:nash}
	A hypergraph $H = (V,E)$ contains $k$ disjoint hypertrees if and only if 
	\begin{equation}\label{eq:k-p-c}
			|\delta(\PP)| \geq k (|\PP| - 1)
	\end{equation}
	holds for every partition $\PP$ of $V$.
\end{theorem}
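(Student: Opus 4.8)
The plan is to derive Theorem~\ref{thm:nash} from the classical matroid base-packing theorem of Nash--Williams and Edmonds, applied to the hypergraphic matroid $M(H)$, using the rank formula of Theorem~\ref{rank} to translate between the matroid-theoretic packing condition and the partition condition \eqref{eq:k-p-c}. Since $k=0$ is vacuous, assume $k\ge 1$. Taking $\PP=\{V\}$ in Theorem~\ref{rank} gives $r(E)\le |V|-1$. If $r(E)<|V|-1$, then $H$ has no hypertree at all (a hypertree is an independent set of size $|V|-1$), while a partition $\PP$ attaining $r(E)=|V|-|\PP|+|\delta(\PP)|\le |V|-2$ has $|\PP|\ge 2$ and $|\delta(\PP)|\le |\PP|-2<k(|\PP|-1)$, so \eqref{eq:k-p-c} fails too; hence I may assume $r(E)=|V|-1$, in which case a hyperforest is a hypertree precisely when it is a basis of $M(H)$, and ``$H$ contains $k$ disjoint hypertrees'' means exactly ``$M(H)$ contains $k$ pairwise disjoint bases.''

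For the direction ``$k$ disjoint hypertrees $\Rightarrow$ \eqref{eq:k-p-c}'' I would argue directly, without the packing theorem. Fix a partition $\PP=\{V_1,\dots,V_m\}$ and a single hypertree $T$. Each hyperedge of $T$ is either contained in exactly one block $V_j$ (so it lies in $T[V_j]$) or meets at least two blocks (so it lies in $\delta_T(\PP)$), and these cases are mutually exclusive, so $|T|=|\delta_T(\PP)|+\sum_{j=1}^{m}|T[V_j]|$. Plugging in $|T|=|V|-1$ and the hyperforest inequality $|T[V_j]|\le |V_j|-1$ yields $|\delta_T(\PP)|\ge (|V|-1)-\sum_{j}(|V_j|-1)=|\PP|-1$. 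Summing over $k$ pairwise disjoint hypertrees $T_1,\dots,T_k$, whose crossing sets $\delta_{T_i}(\PP)$ are pairwise disjoint subsets of $\delta(\PP)$, gives $|\delta(\PP)|\ge k(|\PP|-1)$.

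For the converse, I would invoke the matroid base-packing theorem: $M(H)$ has $k$ pairwise disjoint bases if and only if $|E|-|F|\ge k\bigl(r(E)-r(F)\bigr)$ for every $F\subseteq E$. So fix $F\subseteq E$ and, via Theorem~\ref{rank}, choose a partition $\PP$ with $r(F)=|V|-|\PP|+|\delta_F(\PP)|$. Because $r(E)=|V|-1$, the quantity $k\bigl(r(E)-r(F)\bigr)$ equals $k(|\PP|-1)-k\,|\delta_F(\PP)|$. The hyperedges crossing $\PP$ split according to membership in $F$, namely $|\delta(\PP)|=|\delta_F(\PP)|+|\delta_{E\setminus F}(\PP)|$ with $|\delta_{E\setminus F}(\PP)|\le |E\setminus F|=|E|-|F|$; combining this with the hypothesis $|\delta(\PP)|\ge k(|\PP|-1)$ and with $k\ge 1$ (so $k\,|\delta_F(\PP)|\ge |\delta_F(\PP)|$) gives
\[
k(|\PP|-1)-k\,|\delta_F(\PP)|\ \le\ k(|\PP|-1)-|\delta_F(\PP)|\ \le\ |\delta_{E\setminus F}(\PP)|\ \le\ |E|-|F|,
\]
which is exactly the inequality the packing theorem demands.

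The step I expect to be the main obstacle is this last translation: one must feed into the rank formula the partition that is optimal for $F$ rather than for $E$, and then handle the $F$-crossing hyperedges, which contribute with opposite effective signs on the two sides of the target inequality, using the hypothesis $k\ge 1$ to absorb the discrepancy. Everything else is bookkeeping with Theorem~\ref{rank} and the identity $|T|=|\delta_T(\PP)|+\sum_j|T[V_j]|$; an alternative would be to also route the easy direction through the packing theorem, but the direct counting argument above is shorter and self-contained.
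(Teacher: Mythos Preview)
The paper does not supply its own proof of Theorem~\ref{thm:nash}; the result is simply quoted from \cite{Frank}, so there is no in-paper argument to compare against. Your proposal is therefore being assessed on its own merits.

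Your argument is correct. The easy direction is a clean counting argument, and your handling of the degenerate case $r(E)<|V|-1$ is sound: the trivial partition $\PP=\{V\}$ always gives the value $|V|-1$ in the rank formula, so any strictly smaller value must come from a partition with $|\PP|\ge 2$, and then $|\delta(\PP)|\le|\PP|-2<k(|\PP|-1)$ for $k\ge 1$. For the hard direction, the reduction to Edmonds' matroid base-packing theorem via Theorem~\ref{rank} is exactly the natural route, and your chain of inequalities is right; the key observation that $k\ge 1$ lets you replace $k|\delta_F(\PP)|$ by $|\delta_F(\PP)|$ is the only place one might pause, and you have it correct. This is essentially how one would expect the proof in \cite{Frank} to go as well, since that paper develops the hypergraphic matroid precisely to make such matroidal packing/covering arguments available.
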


Another fundamental property of hypergraphs concerns their decomposition into disjoint hyperforests. The {\it arboricity} of a graph is the minimum number of edge-disjoint forests into which the edge-set can be decomposed. Nash-Williams \cite{nash1964decomposition} gave a characterization of this number, and
Frank et al.~in \cite{Frank} extended this to hypergraphs as follows.

\begin{theorem}[\cite{Frank}]
	A hypergraph $H = (V,E)$ can be partitioned into $k$ disjoint hyperforests 
	if and only if, for every $X \subseteq V$, 
	\[
	|E[X]| \leq k (|X| - 1).
	\]
\end{theorem}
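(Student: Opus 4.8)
The plan is to deduce the statement from the matroid union / covering theorem applied to the hypergraphic matroid $M(H)$ of Lorea \cite{Lo}, together with the rank formula of Theorem \ref{rank}. Recall that the covering theorem says that the ground set $E$ of a matroid with rank function $r$ can be partitioned into $k$ independent sets if and only if $|F| \le k\,r(F)$ for every $F \subseteq E$ (any covering by $k$ independent sets refines to a partition, since subsets of independent sets are independent). As the independent sets of $M(H)$ are exactly the hyperforests, it will suffice to show that the condition ``$|E[X]| \le k(|X|-1)$ for every non-empty $X \subseteq V$'' is equivalent to ``$|F| \le k\,r(F)$ for every $F \subseteq E$''. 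The necessity half of the theorem needs none of this: if $E = F_1 \sqcup \cdots \sqcup F_k$ with each $F_i$ a hyperforest, then for any non-empty $X$ the sets $F_i[X]$ partition $E[X]$, so $|E[X]| = \sum_i |F_i[X]| \le k(|X|-1)$.

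For the equivalence, one direction falls straight out of Theorem \ref{rank}: given a non-empty $X$, apply the rank formula to $F = E[X]$ with the partition $\PP$ of $V$ consisting of $X$ and the singletons $\{v\}$ for $v \notin X$. No hyperedge of $E[X]$ meets two parts of $\PP$, so $\delta_{E[X]}(\PP) = \emptyset$ and $|\PP| = |V| - |X| + 1$, giving $r(E[X]) \le |X| - 1$; hence ``$|F| \le k\,r(F)$ for all $F$'' yields $|E[X]| \le k\,r(E[X]) \le k(|X|-1)$.

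Conversely, assuming the vertex-set condition, fix $F \subseteq E$ and let $\PP = \{V_1,\dots,V_m\}$ attain the minimum in Theorem \ref{rank}, so $r(F) = |V| - m + |\delta_F(\PP)|$. Since the hyperedges of $F$ outside $\delta_F(\PP)$ are exactly those lying inside a single block, $F\setminus\delta_F(\PP) = \bigsqcup_{j} F[V_j]$, and using $|F[V_j]| \le |E[V_j]| \le k(|V_j|-1)$,
\[
|F| = |\delta_F(\PP)| + \sum_{j=1}^m |F[V_j]| \le |\delta_F(\PP)| + k(|V| - m).
\]
Because $k \ge 1$ one may bound $|\delta_F(\PP)| \le k\,|\delta_F(\PP)|$, so $|F| \le k\big(|V| - m + |\delta_F(\PP)|\big) = k\,r(F)$.

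I expect the only delicate points to be: (i) pinning down the right tool — the matroid covering theorem for $M(H)$ — which is what turns the problem into a bare rank inequality; and (ii) the translation through Theorem \ref{rank}, in particular seeing that the ``crossing'' hyperedges $\delta_F(\PP)$ are benign because overcounting them by the factor $k$ is permissible once $k \ge 1$ (the degenerate case $k = 0$, which forces $E = \emptyset$ already by taking $X = V$, being immediate). If one prefers not to invoke the covering theorem as a black box, the same counting can be run inside a direct matroid-union/augmenting argument on $M(H)$, but the combinatorial heart is unchanged.
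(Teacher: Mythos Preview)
The paper does not give its own proof of this theorem; it is quoted from Frank et al.~\cite{Frank} as background and no argument appears in the text. So there is nothing to compare against.

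That said, your proof is correct. Applying Edmonds' matroid covering theorem to the hypergraphic matroid $M(H)$ reduces the question to the rank inequality $|F|\le k\,r(F)$, and your two-way translation via the rank formula of Theorem~\ref{rank} is clean: the partition $\{X\}\cup\{\{v\}:v\notin X\}$ yields $r(E[X])\le |X|-1$, and for the converse the decomposition $|F|=|\delta_F(\PP)|+\sum_j|F[V_j]|$ together with $|\delta_F(\PP)|\le k\,|\delta_F(\PP)|$ (valid once $k\ge 1$) gives $|F|\le k\,r(F)$. The degenerate case $k=0$ and the implicit restriction to non-empty $X$ (needed since $|E[\emptyset]|=0\le -k$ would otherwise fail) are both handled. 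This is essentially the expected route and very likely mirrors the original argument in~\cite{Frank}.
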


One of the main purposes of this paper is to generalize the Fulkerson duality 
between spanning trees and partitions of the vertex set to the context of hypergraphs utilizing Theorem \ref{thm:nash}. 
For a positive integer $k$, a hypergraph $H$ is said to be 
\textit{$k$-partition-connected} if $H$ satisfies (\ref{eq:k-p-c}). A 
$1$-partition-connected hypergraph is also referred to as a 
\textit{partition-connected} hypergraph. 
A hypergraph $H$ is said to be \textit{connected} if, for every non-empty proper 
subset $X$ of $V$, there exists a hyperedge of $H$ that intersects both $X$ and 
$V \setminus X$. It follows from the definition that partition-connected 
hypergraphs must be connected, but a connected hypergraph may not be 
partition-connected. 
Theorem \ref{thm:nash} above shows that $H$ is partition-connected if and only 
if $H$ contains a hypertree.
For a vector $x \in \mathbb{R}^E$ and $F \subseteq E$, we use $x(F)$ to denote 
$\sum_{e \in F} x(e)$. The indicator vector of $F$ is denoted by 
$\ones_F \in \mathbb{R}^{E}_{\geq 0}$. 
Building on Theorem \ref{thm:nash}, the authors in \cite{ba23al} demonstrated 
that given a connected hypergraph $H = (V,E)$, the polyhedron defined by the 
following system of partition inequalities for vectors $x \in \mathbb{R}^E$,
\begin{align}
	& x(\delta(\PP)) \geq |\PP| - 1, \quad \text{for all partitions $\PP$ of $V$,} 
	\label{in1} \\
	& x \geq 0, \label{in2}
\end{align}
has integral extreme points. We call this polyhedron the \textit{partition polyhedron} and denote it by $P = P(H) \subset \mathbb{R}_{\geq 0}^E$. Informally, this result shows that the Fulkerson blocker family of the family 
of partitions of the vertex set contains integral points. This motivates us to further explore this result as follows.

For every multiset $A$ from $E$, we denote $ V[A] \subset V$ as the set of vertices in $\bigcup_{e \in A} e$. The hypergraph $(V[A], A)$ is called the {\it subhypergraph induced by the hyperedge-multiset $A$} and denoted by $H[A]$.
	The indicator vector $x^{A} \in \mathbb{R}^E_{\geq 0}$ of $A$ is defined such that $x^{A}(e)$ represents the multiplicity of $e$ in $A$, see Example \ref{ex:ex1} for instance.
	We introduce the following definition.
	\begin{definition}\label{def:omega}
		The family of all indicator vectors of multisets $A$ from $E$ such that $H[A]$ is a hypertree with vertex set $V$ is called the {\it multi-tree} family of $H$ and is denoted by $\Omega = \Omega(H)$.
	\end{definition}
	\begin{example}\label{ex:ex1}
		Consider the hypergraph $H$ in Figure \ref{fig:example1}. A multiset $A$ from $E$ can be represented by its indicator vector $x^{A} = (x_1,x_2)$ of $A$, where each $x_i\in\bN$ is the number of copies of $e_i$ that are in $A$. Then, the multi-tree family $\Omega(H)$ contains 3 vectors: $(1,1),(2,0)$, and $(0,2)$.
	\end{example} 
	\begin{figure}[t]
		\centering
		\includegraphics[scale=0.5]{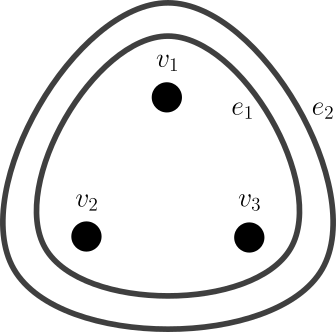}
		\caption{A hypergraph $H =(V,E)$ with $V =\lbr v_1,v_2,v_3 \rbr,$ $ E = \lbr e_1,e_2\rbr$, and $e_1 = e_2 =\lbr v_1,v_2,v_3 \rbr$.}
		\label{fig:example1} 
	\end{figure} 
 It was established in \cite{ba23al} that every extreme point of the partition polyhedron $P$ is a vector belonging to $\Omega(H)$. In the context of graphs, these extreme points are
	the indicator vectors of spanning trees. As a generalization, this paper investigates the Fulkerson duality between the 
	system of partition inequalities and the multi-tree family. Before stating our 
	result, we introduce some necessary notations.
	
	A partition $\PP=\{V_1, \ldots, V_k\}$ of $V$ is said to be {\it feasible} if each $V_i$ induces a connected subhypergraph $H[V_i]$ of $H$, for $i = 1,\dots,k$. For $v \in V$, the vertex-deleted subhypergraph $H \setminus v$ is obtained from $H$ by deleting $v$ from the vertex set, from all hyperedges containing $v$, and discarding any resulting empty hyperedges and loops. A hypergraph $H = (V,E)$ is said to be {\it vertex-biconnected}, if no single vertex deletion disconnects the hypergraph. 
	Let $f$ be a hyperedge of the hypergraph $H$. By $H/f$, we denote the hypergraph obtained from $H$ by contracting 
	the hyperedge $f$ into a new vertex $v_f$ and removing any resulting loops, if there are any. Specifically, $V(H/f) = (V(H) \setminus {f}) \cup \{v_f\}$, and a hyperedge $e'$ is in $ E(H/f)$, if and only if, either 
	$e' = e$ for some $e \in E(H)$ with $e \cap f = \emptyset$, or 
	$e' = (e \setminus f) \cup \{v_f\}$ for some $e \in E(H) \setminus \{f\}$ with $e \cap f \neq \emptyset$ and $e \cap (V(H) \setminus f) \neq \emptyset$. Let $F \subseteq E(H)$, then 
	$H/F$ is the hypergraph obtained from $H$ by contracting all hyperedges in $F$. If $S$ is a subhypergraph of $H$, then $H/S$ denotes $H/E(S)$.
	Let $\PP$ be a feasible partition, the shrunk hypergraph $H_{\PP}$ with respect to $\PP$ 
	is defined as the contraction $H / (E \setminus \delta(\PP))$. 
	See Definition \ref{def:blocker} for the notion of Fulkerson blocker families, the following theorem is our first main result. This theorem  provides a minimal description of the system of partition inequalities, see Remark \ref{re:minimal}.
	\begin{theorem}\label{thm:fulkerson}
		The Fulkerson blocker family $\widehat{\Omega}$ of $\Omega$ is the set of vectors $\frac{1}{|\PP|-1}\ones_{\delta(\PP)}$ for all feasible partitions $\PP$ whose shrunk hypergraphs $H_{\PP}$ are vertex-biconnected.
	\end{theorem}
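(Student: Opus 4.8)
The plan is to translate the statement into the language of blocking polyhedra and then reduce it to two structural facts about hypergraphs. Write $P=P(H)$ for the partition polyhedron, let $\BL(\cdot)$ and $\Dom(\cdot)$ denote the blocking polyhedron and the dominant, recall that $\widehat{\Omega}=\operatorname{ext}\BL(\Omega)$, and set $\cF:=\{\frac{1}{|\PP|-1}\ones_{\delta(\PP)}:\PP\text{ feasible and }H_\PP\text{ vertex-biconnected}\}$, a finite set, so that the assertion becomes $\widehat{\Omega}=\cF$. First I would note $\Omega\subseteq P$: if $x^A\in\Omega$ and $\PP=\{V_1,\dots,V_k\}$ is any partition, then $A\setminus\delta_A(\PP)=\bigsqcup_i A[V_i]$ with $|A[V_i]|\le|V_i|-1$, so $x^A(\delta(\PP))=|A|-\sum_i|A[V_i]|\ge(|V|-1)-\sum_i(|V_i|-1)=|\PP|-1$. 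Since $P$ is pointed with recession cone $\mathbb{R}^E_{\ge0}$ and all of its extreme points lie in $\Omega$ by \cite{ba23al}, these two inclusions force $\Dom(\Omega)=P$; hence $\BL(\Omega)=\BL(P)$ and $\widehat{\Omega}=\operatorname{ext}\BL(P)$. Observe also that every $\frac{1}{|\PP|-1}\ones_{\delta(\PP)}$ with $|\PP|\ge2$ lies in $\BL(P)$, being a rescaled defining inequality of $P$.

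To get $\widehat{\Omega}\subseteq\cF$ I would prove a minimal description of $P$: every partition inequality $x(\delta(\PP))\ge|\PP|-1$ is a nonnegative combination, together with $x\ge0$, of those indexed by $\cF$. The proof is by induction on $|\PP|$. If some $V_i$ is disconnected in $H[V_i]$, refining $V_i$ into the connected components of $H[V_i]$ produces a partition with the same crossing-edge set and strictly more parts, whose inequality is at least as strong; so we may assume $\PP$ feasible. If $H_\PP$ is vertex-biconnected we are done; otherwise pick a cut vertex $v$ of $H_\PP$. The key observation is that the branches at $v$ carry the edge set cleanly: removing $v$ from a hyperedge that meets two different components of $H_\PP\setminus v$ would reconnect them, so no hyperedge can do this. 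Writing $S_1,\dots,S_m$ ($m\ge2$) for the components of $H_\PP\setminus v$ and $\PP^{(l)}$ for the partition obtained from $\PP$ by fusing all parts outside $S_l$ into one, one checks that each $\PP^{(l)}$ is feasible, that $|\PP^{(l)}|<|\PP|$, that $\delta(\PP)=\bigsqcup_l\delta(\PP^{(l)})$, and that $\sum_l(|\PP^{(l)}|-1)=|\PP|-1$; hence the $\PP$-inequality is exactly the sum of the $\PP^{(l)}$-inequalities and the induction closes. Thus $P$ is cut out already by $x\ge0$ together with the inequalities $\langle x,q\rangle\ge1$, $q\in\cF$, that is, $P=\BL(\cF)$; since $\cF$ is finite, $\BL(P)=\BL(\BL(\cF))=\Dom(\cF)$, so $\widehat{\Omega}=\operatorname{ext}\Dom(\cF)\subseteq\cF$.

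For the reverse inclusion I would show every $y=\frac{1}{|\PP|-1}\ones_{\delta(\PP)}\in\cF$ is an extreme point of $\BL(\Omega)$, by exhibiting $|E|$ linearly independent constraints active at $y$. The coordinate constraints $\{x_e\ge0:e\notin\delta(\PP)\}$ provide $|E|-|\delta(\PP)|$ of these, so it remains to see that the vectors $x^A$ with $A\in\Omega$ and $x^A(\delta(\PP))=|\PP|-1$ project onto a spanning subset of $\mathbb{R}^{\delta(\PP)}$. For such $A$, equality forces $|A[V_i]|=|V_i|-1$ for each $i$; then $A[V_i]$ must span $V_i$, so it is a multi-tree of $H[V_i]$, and the crossing part of $A$ descends to a multi-tree of $H_\PP$. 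Conversely, a short estimate, $|A[X]|\le\sum_{i:\,X\cap V_i\ne\emptyset}(|X\cap V_i|-1)+(|\widehat X|-1)=|X|-1$ with $\widehat X$ the image of $X$ in $H_\PP$, shows that any multi-tree of $H_\PP$, combined with arbitrarily chosen multi-trees of the $H[V_i]$ (which exist because $\PP$ is feasible), recombines into such an $A$. Hence the projections in question are exactly the indicator vectors of multi-trees of $H_\PP$, and the problem reduces to the claim that, for a vertex-biconnected hypergraph $K$, the indicator vectors of its multi-trees affinely span the hyperplane $\{x\in\mathbb{R}^{E(K)}:\sum_f x_f=|V(K)|-1\}$. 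Since those vectors have constant coordinate sum $|V(K)|-1\ne0$, this is equivalent to their linearly spanning $\mathbb{R}^{E(K)}$, which is what the vertex criterion needs. Granting it, $y$ is a vertex of $\BL(\Omega)$, so $\cF\subseteq\widehat{\Omega}$, and together with the previous paragraph $\widehat{\Omega}=\cF$.

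The main obstacles are the two hypergraph inputs. The branch decomposition at a cut vertex of $H_\PP$ is the engine of the minimal description and requires careful bookkeeping---especially the feasibility of the $\PP^{(l)}$---but it behaves essentially as in the graph case. The genuinely hard part is the full-dimensionality claim for the multi-tree polytope of a vertex-biconnected hypergraph: this is exactly where hypergraphs diverge from graphs, since a vertex-biconnected hypergraph need not be partition-connected and may admit no honest hypertree at all---only multi-trees with repeated hyperedges---so matroid base-exchange is unavailable. I expect this step to require a dedicated induction (along a contraction, or along an ear-type decomposition of vertex-biconnected hypergraphs) carried out directly at the level of multisets, and it is where I would invest most of the effort.
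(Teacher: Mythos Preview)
Your overall architecture is sound and coincides with the paper's: establish $\Dom(\Omega)=P$ using the result of \cite{ba23al} (this is Proposition~\ref{pro:dualpair}), then prove the two inclusions $\widehat\Omega\subseteq\cF$ and $\cF\subseteq\widehat\Omega$ separately. Your cut-vertex decomposition for the first inclusion is essentially Lemma~\ref{lem:cl3}; the only cosmetic difference is that you split into all $m$ branches at once while the paper splits into two and writes the usage vector as an explicit convex combination. Your reduction of the second inclusion to the claim that the multi-tree vectors of a vertex-biconnected hypergraph $K$ span $\R^{E(K)}$ is also exactly what drives Lemma~\ref{lem:cl2}: the paper shows that if $w=\tfrac12(\rho_1+\rho_2)$ with $\rho_i\in\Adm(\Omega)$, then $\rho_1=\rho_2$ on $\delta(\PP)$, and the engine is precisely an exchange statement equivalent to your spanning claim.

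Where you diverge is in your assessment of that hard step. You write that ``matroid base-exchange is unavailable'' because $K$ need not be partition-connected, and you propose a dedicated induction. The paper's insight is that base exchange \emph{is} available once one passes to $K^t$ with $t=|V(K)|$: multi-trees of $K$ are in bijection with hypertrees of $K^t$, i.e.\ with bases of the hypergraphic matroid $M(K^t)$ (Remark~\ref{re:1-1}). For any two hyperedges $e,f$ of $K$, the paper invokes a known characterization of vertex-biconnectivity (Theorem~\ref{thm:commoncycle}): since $K^t$ is vertex-biconnected with no cut edges and all degrees at least~$2$, copies $e^1,f^1$ lie on a common cycle in $K^t$; padding this cycle with additional parallel copies (always available since each parallel class has $t=|V|$ members) produces a genuine circuit of $M(K^t)$ containing $e^1$ and $f^1$. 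Ordinary matroid base exchange then yields bases $B_1,B_2$ with $B_2=(B_1\setminus\{e^1\})\cup\{f^1\}$, hence multi-trees $x_1,x_2\in\Omega(K)$ with $x_1-x_2=\ones_e-\ones_f$ (Lemma~\ref{lem:twovectors}). These differences, together with any single multi-tree vector of nonzero coordinate sum, span $\R^{E(K)}$. So your plan is correct, but the route you anticipate for the crux is harder than necessary; the $H^t$ device that restores matroid exchange is the missing idea.
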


	The next goal of this paper is to study modulus for the family of hypertrees and the multi-tree family of hypergraphs. The theory of {\it discrete modulus} has received significant attention in recent years \cite{modulus, pietrominimal, pietroblocking, pietrofairest, huyfulkerson, basemodulus}. This framework is a very flexible tool for measuring the richness of families of objects defined on a finite set. For instance, the modulus of the family of spanning trees in undirected graphs was investigated in \cite{pietrofairest, huyfulkerson}. More recently, \cite{basemodulus} studied the modulus of the family of bases of matroids, demonstrating its strong connection to two well-studied concepts in matroid theory: strength and fractional arboricity. 
	Following the concepts of strength and fractional arboricity in graphs, 
	we define the {\it strength} of a hypergraph $H$ as
	\begin{equation}\label{eq:S}
		S(H) := \min \left\{ \frac{|\delta(\PP)|}{|\PP| - 1} : \PP \text{ is a partition of } V, \ |\PP| \geq 2 \right\},
	\end{equation}
	and define the {\it fractional arboricity} of $H$ as
	\begin{equation}\label{eq:D}
		D(H) := \max \left\{ \frac{|E[X]|}{|X| - 1} : X \subseteq V, \ |X| \geq 2 \right\}.
	\end{equation}
For a hypergraph $H$ with given weights $\sigma \in \mathbb{R}^E_{>0}$ on the 
hyperedge set $E$, we generalize the strength of $H$ with respect to the 
weights $\sigma$ as follows:
\begin{equation}\label{eq:weighted-s}
	S_{\sigma}(H) = \min \left\{ \frac{\sigma(\delta(\PP))}{|\PP| - 1} : 
	\PP \text{ is a partition of } V, \ |\PP| \geq 2 \right\}.
\end{equation}

We aim to extend the concept of modulus for spanning trees to the context of 
hypergraphs; see Definition \ref{def:mod} for the definition of modulus. 
One immediate corollary of the Fulkerson duality in Theorem \ref{thm:fulkerson} 
is the relationship between $1$-modulus and strength, which is described as follows:
\begin{equation}
	\Mod_{1,\sigma}(\Omega(H)) = S_{\sigma}(H),
\end{equation}
for any set of weights $\sigma \in \mathbb{R}^E_{> 0}$, see Remark \ref{re:mod1-s}.
	
	Hence, in the unweighted case, the relationship simplifies to:
	\begin{equation}\label{eq:omega-s}
		\Mod_{1}(\Omega(H)) = S(H).
	\end{equation}
	
	Next, we consider the connection between the strength of a hypergraph and its 
	 family of hypertrees. Let $H$ be a partition-connected hypergraph, and let 
	$\Gamma = \Gamma(H)$ denote the family of all hypertrees of $H$, referred to as 
	the \textit{hypertree family} of $H$. 
	By definition of $\Omega(H)$, the set of indicator vectors of hypertrees in 
	$\Gamma(H)$ is contained in $\Omega(H)$. In general, 1-modulus of 
	the hypertree family does not necessarily coincide with the strength. In other words, we may have
	\begin{equation}\label{eq:s-not-equal}
		\Mod_{1,\sigma}(\Gamma(H)) \neq S_{\sigma}(H),
	\end{equation}
 for some weights $\si$, see Example \ref{ex:mod1-s} for instance. While (\ref{eq:s-not-equal}) shows that a general statement is not available, we provide a sufficient condition for equality in the following theorem. To our knowledge, this theorem cannot be deduced from Theorem \ref{thm:nash}, highlighting the importance of our proof.
	\begin{theorem}\label{mod-strength}
		Let $H$ be a partition-connected hypergraph. Then,
		\begin{equation}
			\Mod_1(\Ga(H)) = S(H).
		\end{equation}
	
	\end{theorem}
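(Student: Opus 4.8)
The plan is to prove the two inequalities $\Mod_1(\Ga(H)) \le S(H)$ and $\Mod_1(\Ga(H)) \ge S(H)$ separately, where the first is essentially free from the monotonicity of modulus and the second is the heart of the matter.

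For the upper bound, recall that the indicator vectors of hypertrees form a subfamily of $\Omega(H)$, and modulus is monotone decreasing under enlarging the family; hence $\Mod_1(\Ga(H)) \le \Mod_1(\Omega(H)) = S(H)$ by \eqref{eq:omega-s}. (One must also check $\Ga(H) \ne \emptyset$, which holds because $H$ is partition-connected, by Theorem \ref{thm:nash} with $k=1$.)

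For the lower bound, the natural route is to exhibit, for an optimal $1$-modulus density $\rho^*$ on the hyperedges, a probability measure $\mu$ on $\Ga(H)$ whose ``expected usage vector'' $\bar\rho := \bE_{\Ga \sim \mu}[\ones_{E(\Ga)}]$ is constant, equal to $1/S(H)$ on every hyperedge — or, dually, to use the Fulkerson/LP characterization of $1$-modulus: $\Mod_1(\Ga(H))$ equals the optimal value of the LP $\min \sum_e \rho(e)$ subject to $\rho(E(\Ga)) \ge 1$ for all hypertrees $\Ga$, $\rho \ge 0$, whose dual is a fractional packing of hypertrees. By LP duality it suffices to produce a fractional packing of hypertrees of total weight $S(H)$ in which every hyperedge is used with total weight at most $1$. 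Equivalently, scaling, one wants to write the all-ones vector $\ones_E$ (or a vector dominated by it on the support) as $S(H)$ times a convex combination of hypertree indicator vectors. This is exactly a weighted/fractional version of the Nash--Williams--Tutte packing statement: Theorem \ref{thm:nash} says $H$ contains $k$ disjoint hypertrees iff $|\delta(\PP)| \ge k(|\PP|-1)$ for all $\PP$; the fractional relaxation should give that one can fractionally pack hypertrees to total weight exactly $S(H) = \min_\PP |\delta(\PP)|/(|\PP|-1)$ using each hyperedge at most once. I would obtain this by applying Theorem \ref{thm:nash} to a suitable integer multiple: choose $N$ so that $N\cdot S(H)$ is close to an integer, work in the hypergraph $H^{(N)}$ obtained by taking $N$ parallel copies of each hyperedge, observe that the partition bound in $H^{(N)}$ becomes $N|\delta(\PP)| \ge \lfloor N\, S(H)\rfloor (|\PP|-1)$, hence $H^{(N)}$ contains $\lfloor N\, S(H)\rfloor$ disjoint hypertrees; each hyperedge of $H$ has only $N$ copies, so in the resulting packing each hyperedge is used at most $N$ times, i.e. after dividing by $N$ we get a fractional packing of value $\lfloor N\,S(H)\rfloor/N \to S(H)$ with per-hyperedge usage $\le 1$. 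Feeding this packing into the LP dual bounds $\Mod_1(\Ga(H))$ from below by $\lfloor N\, S(H)\rfloor/N$ for every $N$, hence by $S(H)$.

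The main obstacle is the fractional packing step: one must verify carefully that the partition inequality scales correctly under taking parallel copies (a hyperedge crossing $\PP$ in $H$ contributes $N$ crossing copies in $H^{(N)}$, and $\delta(\PP)$ contains no loops, so $|\delta_{H^{(N)}}(\PP)| = N|\delta_H(\PP)|$ exactly), and that a set of $\lfloor N\,S(H)\rfloor$ \emph{edge-disjoint} hypertrees in $H^{(N)}$ really does translate to a fractional packing in $H$ in which no hyperedge is overused — this uses that the $\lfloor N\,S(H)\rfloor$ hypertrees are disjoint as edge-sets in $H^{(N)}$ and there are only $N$ copies of each original hyperedge available. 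A secondary point is to confirm that $1$-modulus of a nonempty finite family indeed equals the covering LP value and its dual the packing LP value (a standard fact, stated in the references \cite{pietrofairest, huyfulkerson}), and that taking the limit $N \to \infty$ is legitimate since $\Mod_1$ does not depend on $N$. Modulo these verifications, combining the two inequalities yields $\Mod_1(\Ga(H)) = S(H)$.
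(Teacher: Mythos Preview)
Your upper bound $\Mod_1(\Ga(H)) \le \Mod_1(\Omega(H)) = S(H)$ is correct. The lower bound, however, has a genuine gap, and it is precisely the gap the paper warns about when it says ``to our knowledge, this theorem cannot be deduced from Theorem \ref{thm:nash}.''

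The problem is the step ``a set of $\lfloor N\,S(H)\rfloor$ edge-disjoint hypertrees in $H^{(N)}$ really does translate to a fractional packing in $H$.'' In a hypergraph, two parallel copies of a hyperedge $e$ with $|e|\ge 3$ are \emph{independent} in the hypergraphic matroid (take $X=e$: then $2\le |e|-1$). Hence a hypertree of $H^{(N)}$ can use several copies of the same hyperedge, and its projection to $E(H)$ is an element of $\Omega(H)$, not of $\Ga(H)$. Your packing argument therefore produces a fractional packing of multi-trees, which via LP duality only yields $\Mod_1(\Omega(H))\ge S(H)$ --- already known --- and says nothing about $\Mod_1(\Ga(H))$. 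To get a fractional packing of genuine hypertrees you would need to guarantee that the $\lfloor N S(H)\rfloor$ disjoint hypertrees in $H^{(N)}$ can be chosen each using at most one copy per hyperedge; Theorem \ref{thm:nash} does not give this, and establishing it is essentially equivalent to the statement you are trying to prove (it is the fractional Edmonds base-packing bound $s(M(H))\ge S(H)$).

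The paper's proof avoids this trap by working through the matroid strength: it uses $\Mod_1(\Ga(H))=s(M(H))$ from the base-modulus theory, and then invokes the $2$-modulus structure result (Theorem \ref{main1}) to show that the optimizing set $E_{\max}$ for $s(M(H))$ is actually a partition cut $\delta_H(\PP)$ with each part inducing a partition-connected subhypergraph, which forces $s(M(H))=|\delta_H(\PP)|/(|\PP|-1)\ge S(H)$. The reverse inequality $s(M(H))\le S(H)$ is obtained by a direct rank comparison. So the key extra ingredient you are missing is exactly the identification of the matroid-strength optimizer with a partition cut, which does not come for free from Nash--Williams--Tutte.
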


	Next, we shift our focus to $2$-modulus for the hypertree family $\Ga(H)$. We will establish connections 
	among the $2$-modulus of $\Ga(H)$, the strength, and the fractional arboricity of $H$ in Section \ref{sec:hypertree}. Specifically, following the notion of homogeneous graphs and homogeneous matroids in \cite{pietrofairest,huyfulkerson,basemodulus}, we introduce the following definition.
	\begin{definition}\label{def:hom1}
		Let $H$ be a partition-connected hypergraph, and let $\Ga = \Ga(H)$ be the hypertree family of $H$. Let $\widehat{\Ga}$ be the Fulkerson blocker family of $\Ga$, see Definition \ref{def:blocker}. The hypergraph $H$ is said to be {\it homogeneous} if the optimal density $\eta^*$ of $\Mod_2(\widehat{\Ga})$ is a constant vector.  
	\end{definition}
	We characterize homogeneous partition-connected hypergraphs in the following theorem.
	\begin{theorem}\label{thm:pc-hom}
		Let $H$ be a partition-connected hypergraph. Then $H$ is homogeneous if and only if its strength coincides with its fractional arboricity.
	\end{theorem}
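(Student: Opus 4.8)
Let me think about this. We want to characterize when $H$ is homogeneous, i.e., when the optimal density $\eta^*$ for $\Mod_2(\widehat{\Gamma})$ is constant, in terms of $S(H) = D(H)$.

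The strategy should parallel the matroid case in \cite{basemodulus}. The key facts I expect to use:
- Modulus duality: $\Mod_2(\Gamma)$ and $\Mod_2(\widehat{\Gamma})$ are related; the optimal $\rho^*$ for $\Gamma$ and optimal $\eta^*$ for $\widehat{\Gamma}$ satisfy $\rho^* \eta^* = $ something (they're essentially parallel, $\eta^* = \rho^*/\Mod_2(\Gamma)$ up to normalization, by Fulkerson/Beurling-type duality for $p=2$).
- An "expected value" / probabilistic interpretation: the optimal $\rho^*(e)$ equals (a scalar times) the probability that $e$ lies in a random hypertree drawn from the optimal pmf $\lambda^*$ on $\Gamma$.
- Theorem \ref{mod-strength}: $\Mod_1(\Gamma(H)) = S(H)$, and the analogous relation $\Mod_1(\Omega(H)) = S(H)$, plus a relation connecting $\Mod_\infty$ or the max-density side to $D(H)$.

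**Outline.**

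First I would recall the $p=2$ modulus duality: $\eta^*$ is (a positive scalar multiple of) $\rho^*$, where $\rho^*$ is the optimal density for $\Mod_2(\Gamma)$. Hence $H$ is homogeneous iff $\rho^*$ is constant on $E$. So the whole theorem reduces to: \emph{the optimal density for $\Mod_2(\Gamma(H))$ is constant iff $S(H) = D(H)$.}

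Second, I would use the probabilistic characterization of $\rho^*$. For spanning-tree / matroid-base modulus (and this should carry over to $\Gamma(H)$ since $\Gamma(H)$ is the set of bases of the hypergraphic matroid $M(H)$ — this is exactly why the matroid results in \cite{basemodulus} apply), the optimal $\rho^*$ satisfies $\rho^*(e) = c \cdot \eta^*(e) = c' \cdot \bP_{\lambda^*}[e \in T]$ for the optimal pmf $\lambda^*$ on $\Gamma$, where the edge-usage probabilities are a point in the base polytope / spanning-tree polytope. The number $\bP_{\lambda^*}[e\in T]$ summed over $e$ equals $|V|-1 = r(M(H))$ (every hypertree has exactly $|V|-1$ edges). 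So $\rho^*$ constant $\iff$ $\bP_{\lambda^*}[e \in T] = \frac{|V|-1}{|E|}$ for every $e$, i.e., there is a "fair" / uniformly-spread distribution over hypertrees.

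Third — and this is the heart — I would show: such a uniform edge-usage distribution over hypertrees exists $\iff$ $S(H) = D(H)$. For the "if" direction: $D(H) = \frac{|E[V]|}{|V|-1} = \frac{|E|}{|V|-1}$ exactly when the max in \eqref{eq:D} is attained at $X = V$; combined with $S(H) \le \frac{|E|}{|V|-1} \le D(H)$ always (the partition into singletons gives $\frac{|\delta|}{|V|-1} = \frac{|E|}{|V|-1}$ when $H$ has no loops, and $X=V$ is a candidate in the arboricity max), equality $S(H)=D(H)$ forces $S(H) = D(H) = \frac{|E|}{|V|-1}$. Then one uses Theorem \ref{thm:nash} (applied after scaling, or via the density $\frac{|E|}{|V|-1}$ being both the packing and covering rate) to build a fractional packing of hypertrees that uses every edge with equal total multiplicity $\frac{|V|-1}{|E|}$; a homogeneity/averaging argument (symmetrizing over the partition-inequality-tight structure) produces the required $\lambda^*$. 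For "only if": if $\rho^*$ is constant then reading off $\Mod_1$ on the blocker via Theorem \ref{mod-strength} and the corresponding statement on the $\Omega$ side gives $S(H) = \Mod_1(\Gamma) $-type identities that pin the common value to $\frac{|E|}{|V|-1}$, and since $D(H) \ge \frac{|E|}{|V|-1} \ge S(H)$ always, the constancy forces the extreme densities to coincide, i.e., $D(H) = S(H)$.

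**Main obstacle.** The delicate point is the third step — converting "constant optimal density for $\Mod_2$" into the combinatorial identity $S(H) = D(H)$, because unlike $\Mod_1$ (which Theorem \ref{mod-strength} ties directly to $S(H)$), the $p=2$ optimal density is a genuine Euclidean projection and its constancy is equivalent to a delicate balance between the \emph{lower} bound coming from partition inequalities (strength side) and the \emph{upper} bound coming from the fractional-arboricity/packing side. The cleanest route is probably: (a) show $\Mod_2(\Gamma)$ is sandwiched, $\frac{(|V|-1)^2}{|E|} \le \Mod_2(\Gamma) $, with equality iff $\rho^*$ is constant (Cauchy–Schwarz on $\sum \rho^*(e) = |V|-1$ using that $\rho^* \equiv$ the base-polytope point); (b) independently show, using Theorem \ref{thm:nash} / Theorem \ref{rank}, that $\Mod_2(\Gamma) = \frac{(|V|-1)^2}{|E|}$ holds iff the minimum-strength partition is the all-singletons partition \emph{and} the max-arboricity set is all of $V$, which is precisely $S(H) = D(H) = \frac{|E|}{|V|-1}$. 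I expect (b) to require the explicit description of the base polytope of $M(H)$ via Theorem \ref{rank} and a careful complementary-slackness analysis of the $\Mod_2$ quadratic program, mirroring the matroid argument in \cite{basemodulus} but with the hypergraphic rank function substituted in.
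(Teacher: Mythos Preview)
Your proposal heads in a valid direction but takes a much longer and more hazardous route than the paper does. The paper's proof is essentially three lines: since $\Gamma(H)$ is the family of bases of the hypergraphic matroid $M(H)$, the result from \cite{basemodulus} says directly that $\eta^*$ is constant if and only if the \emph{matroid} strength $s(M(H))$ equals the \emph{matroid} fractional arboricity $D(M(H))$. The only remaining work is to identify these matroid quantities with the hypergraph quantities $S(H)$ and $D(H)$, and that is exactly what Theorem~\ref{mod-strength} (giving $s(M(H))=S(H)$) and Theorem~\ref{thm:mod:f} (giving $D(M(H))=D(H)$) accomplish. No Cauchy--Schwarz, no complementary-slackness analysis, no fractional packing construction is needed here.

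You do note in passing that ``$\Gamma(H)$ is the set of bases of the hypergraphic matroid $M(H)$ --- this is exactly why the matroid results in \cite{basemodulus} apply,'' but then you abandon this observation and try to rebuild the matroid homogeneity characterization from scratch. That is where the detour starts. Your ``main obstacle'' (step~(b), converting constancy of $\rho^*$ into $S(H)=D(H)$ via the base polytope and the rank formula from Theorem~\ref{rank}) is precisely the content of the matroid result you could have cited; re-deriving it for the hypergraphic rank function is redundant. Moreover, your sketch of step~(b) is loose: the claim that $\Mod_2(\Gamma)=(|V|-1)^2/|E|$ iff ``the minimum-strength partition is the all-singletons partition \emph{and} the max-arboricity set is all of $V$'' conflates optimality with uniqueness of optima, and the ``only if'' direction as written leans on vague appeals to $\Mod_1$ identities rather than a clean argument. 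These are fixable, but the fix is exactly the matroid argument in \cite{basemodulus} --- so you may as well invoke it.

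In short: the correct and short proof is the reduction you almost made. Cite the matroid homogeneity criterion from \cite{basemodulus}, then apply Theorems~\ref{mod-strength} and~\ref{thm:mod:f} to translate $s(M(H))=D(M(H))$ into $S(H)=D(H)$.
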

	Furthermore, we show that the 2-modulus of the hypertree family reveals a 
	hierarchical structure within hypergraphs, which is related to strength and 
	fractional arboricity; see the \textit{Hypergraph Decomposition Process} 
	described in Section \ref{sec:hypertree}.
	
	A natural question arises: what happens when $H$ is not partition-connected? 
	Does it still reveal any hierarchical structure? The answer is yes. 
	Specifically, we will show that the modulus of the multi-tree family of any connected 
	hypergraph reveals a hierarchical structure within it, which is also related to 
	strength and fractional arboricity.
	Let $H = (V,E)$ be a connected hypergraph. Note that $H$ does not necessarily 
	contain any hypertree. A connection between the 2-modulus of the multi-tree family $\Omega(H)$ and 
	the hypertree family $\Gamma(H)$ is established in the following theorem.
	
	\begin{theorem}\label{thm:mod-mod}
		Let $H = (V,E)$ be a connected hypergraph. If $H$ is partition-connected, 
		then $\Mod_{2}(\widehat{\Omega})$ and $\Mod_{2}(\widehat{\Gamma})$ have the 
		same optimal density.
	\end{theorem}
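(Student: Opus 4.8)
The plan is to pass, via the self--duality of $2$--modulus under Fulkerson blocking, from the blocker families to the ``primal'' families $\Omega,\Gamma$, and thence to a matroid base--polytope membership. For a finite family $\cF\subseteq\R^E_{\ge 0}$, writing $\Dom(\co\cF):=\co(\cF)+\R^E_{\ge 0}$, one has the standard facts $\BL(\widehat\cF)=\Dom(\co\cF)$ and hence $\Mod_2(\widehat\cF)=\min\{\|\rho\|_2^2:\rho\in\Dom(\co\cF)\}$ (cf.\ \cite{fulkersonblocking,huyfulkerson,basemodulus}); since $\|\cdot\|_2^2$ is strictly convex and coordinatewise increasing on $\R^E_{\ge 0}$, its minimizer $\eta^*$ over $\Dom(\co\cF)$ in fact lies in $\co(\cF)$ and is the unique point of $\co(\cF)$ closest to the origin. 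Applying this to $\cF=\Omega$ and to $\cF=\Gamma$ and using $\Gamma\subseteq\Omega$ (so $\co\Gamma\subseteq\co\Omega$), it suffices to prove that the point $\eta^*$ of $\co\Omega$ closest to the origin lies in $\co\Gamma$; for then $\eta^*$ is also the point of the smaller set $\co\Gamma$ closest to the origin, so $\Mod_2(\widehat\Omega)$ and $\Mod_2(\widehat\Gamma)$ have the common optimal density $\eta^*$.

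Write $n=|V|$ and let $r$ be the rank function of the hypergraphic matroid $M(H)$. I would first record two facts. (i) Every $x^A\in\Omega$ has $x^A(E)=|A|=n-1$, so $\co\Omega$, and in particular $\eta^*$, lies in $\{z\ge 0:z(E)=n-1\}$; since $H$ is partition--connected it contains a hypertree, so $r(E)=n-1=\eta^*(E)$. (ii) The family $\Gamma$ is exactly the set of bases of $M(H)$, so by Edmonds' base--polytope theorem $\co\Gamma=\{z\in\R^E_{\ge 0}:z(S)\le r(S)\text{ for all }S\subseteq E,\ z(E)=n-1\}$. As $\eta^*\ge 0$ and $\eta^*(E)=n-1$, showing $\eta^*\in\co\Gamma$ reduces to proving $\eta^*(S)\le r(S)$ for every $S\subseteq E$, equivalently, since $\eta^*(E\setminus S)=(n-1)-\eta^*(S)$, to proving
\[
\eta^*(E\setminus S)\ \ge\ (n-1)-r(S)\qquad\text{for every }S\subseteq E.
\]

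For this I would fix $S$ and use Theorem \ref{rank} to choose a partition $\PP$ of $V$ with $r(S)=n-|\PP|+|\delta_S(\PP)|$, so that $(n-1)-r(S)=(|\PP|-1)-|\delta_S(\PP)|$. Every multi--tree on $V$ is partition--connected, hence $x^A(\delta(\PP))\ge|\PP|-1$ for each $x^A\in\Omega$, and therefore $\eta^*(\delta(\PP))\ge|\PP|-1$. Since $\delta(\PP)\setminus S\subseteq E\setminus S$ and $\delta(\PP)\cap S=\delta_S(\PP)$,
\[
\eta^*(E\setminus S)\ \ge\ \eta^*\big(\delta(\PP)\setminus S\big)\ =\ \eta^*(\delta(\PP))-\eta^*(\delta_S(\PP))\ \ge\ (|\PP|-1)-\eta^*(\delta_S(\PP)),
\]
so the displayed inequality follows once we know $\eta^*(\delta_S(\PP))\le|\delta_S(\PP)|$, which in turn follows from the coordinatewise bound $\eta^*_e\le 1$ for all $e\in E$. (Conversely this bound is necessary, since coordinates of base indicators are $0/1$; so $\eta^*_e\le 1$ for all $e$ is in fact equivalent to $\eta^*\in\co\Gamma$.)

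The hard part, and the main obstacle, is establishing $\eta^*_e\le 1$. The natural route is a variational argument against the minimality of $\eta^*$. One has $\co\Omega=P(H)\cap\{z(E)=n-1\}$, the partition polyhedron $P(H)$ being $\Dom(\co\Omega)$ by the result of \cite{ba23al} recalled in the Introduction; so if $\eta^*_{e_0}>1$ one wants to redistribute the excess $\eta^*_{e_0}-1$ onto other coordinates, keeping $z(E)=n-1$ and all partition inequalities satisfied, while strictly decreasing $\|z\|_2$. The coordinates that receive the excess must lie in $\delta(\PP)$ for every partition $\PP$ that is tight at $\eta^*$ and has $e_0\in\delta(\PP)$; showing that enough such coordinates exist with small enough $\eta^*$--value, and that the perturbation decreases the $2$--norm, is where Theorem \ref{rank} and the Kuhn--Tucker description of the projection enter, and it appears cleanest to argue by induction on $|V|$, restricting $\eta^*$ to the induced subhypergraphs $H[V_1],\dots,H[V_k]$ cut out by a tight partition $\PP=\{V_1,\dots,V_k\}$ (partition--connectedness of $H$ being used throughout to keep $\Gamma$, hence all the moduli, well defined). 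Once $\eta^*_e\le 1$ is in hand, the two displays give $\eta^*(S)\le r(S)$ for all $S$, hence $\eta^*\in\co\Gamma$, and by the first paragraph $\Mod_2(\widehat\Omega)$ and $\Mod_2(\widehat\Gamma)$ share the optimal density $\eta^*$.
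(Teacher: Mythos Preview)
Your reduction is sound and attractive: showing that the point $\eta^*$ of $\co\Omega$ closest to the origin already lies in the base polytope $\co\Gamma$ of $M(H)$ would indeed finish the proof, and your use of Theorem~\ref{rank} to reduce the constraints $\eta^*(S)\le r(S)$ to the single coordinatewise bound $\eta^*_e\le 1$ is correct and clean. The identification $\co\Omega = P(H)\cap\{z(E)=n-1\}$ is also correct (any $z$ in the dominant with $z(E)=n-1$ must lie in $\co\Omega$, since adding a nonnegative vector would raise the sum).

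The gap is that you do not prove $\eta^*_e\le 1$. Your variational/inductive sketch is not an argument: you have not exhibited the coordinates to which the excess $\eta^*_{e_0}-1$ can be moved while preserving all tight partition constraints, and ``it appears cleanest to argue by induction'' is not a proof. This is exactly the place where partition-connectedness must enter; without it the bound is false (take $|V|=3$, a single hyperedge $e=V$: then $\Omega=\{(2)\}$ and $\eta^*=(2)$). So as written the proposal is incomplete at its crucial step.

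The paper proceeds quite differently. It passes to the symmetric blow-up $H^t$ and uses the base-modulus theory of \cite{basemodulus} to prove (Theorem~\ref{thm:main2}) that the maximal coordinate of the optimal density for $\Mod_2(\widehat\Omega)$ equals $1/S(H)$; then it matches the Hypergraph Decomposition Processes for $\Omega$ (via $H^t$) and for $\Gamma$ level by level, which pins down the entire optimal density for both problems simultaneously. If you are willing to quote that single consequence of Theorem~\ref{thm:main2}, namely $\tilde\eta^*_{\max}=1/S(H)\le 1$ when $H$ is partition-connected, then your base-polytope argument becomes a complete and arguably tidier alternative: it replaces the recursive matching of two decompositions by one Edmonds--polytope membership check. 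Without that input, however, the bound $\eta^*_e\le 1$ remains unproved.
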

	\begin{remark}
		By Theorem \ref{thm:mod-mod}, if $H$ is partition-connected, $\Mod_{2}(\widehat{\Omega})$ and $\Mod_{2}(\widehat{\Ga})$ reveal the same hierarchical structure of $H$, see the Hypergraph Decomposition Process described in Section \ref{sec:hypertree} and Section \ref{sec:omega}.
	\end{remark}
	The notion of homogeneity for partition-connected hypergraphs in Definition \ref{def:hom1} can be generalized to connected hypergraphs as follows.
	\begin{definition}\label{def:hom2}
		A connected hypergraph $H$ is said to be {\it homogeneous} if the optimal density $\tilde{\eta}^*$ of $\Mod_2(\widehat{\Omega}(H))$ is a constant vector. If $H$ is partition-connected, then by Theorem \ref{thm:mod-mod}, this definition coincides with Definition \ref{def:hom1}.
	\end{definition}
The following theorem extends Theorem \ref{thm:pc-hom} to the general case of 
connected hypergraphs.

\begin{theorem}\label{thm:c-hom}
	Let $H = (V,E)$ be a connected hypergraph. Then $H$ is homogeneous if and 
	only if its strength coincides with its fractional arboricity.
\end{theorem}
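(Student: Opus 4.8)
The plan is to translate the definition of homogeneity into a statement about the partition polyhedron $P=P(H)$ and then evaluate the quantities involved. By Fulkerson duality together with Theorem~\ref{thm:fulkerson} and Remark~\ref{re:minimal}, the densities admissible for $\widehat{\Omega}$ are exactly the points of $P(H)$; equivalently, $P(H)$ is the dominant of $\co(\Omega(H))$ (using that, by \cite{ba23al}, every extreme point of $P(H)$ belongs to $\Omega(H)$). Hence $\Mod_2(\widehat{\Omega})=\min\{\sum_{e\in E}\eta(e)^2:\eta\in P(H)\}$, and by strict convexity the optimal density $\tilde\eta^*$ is the unique $\ell_2$-projection of the origin onto $P(H)$. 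Thus $H$ is homogeneous precisely when this projection is a constant vector.

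First I would pin down the only possible constant candidate. A constant vector $c\,\ones_E$ lies in $P(H)$ iff $c\,|\delta(\PP)|\ge|\PP|-1$ for every partition $\PP$ with $|\PP|\ge2$; since $H$ is connected, $|\delta(\PP)|\ge1$ for every such $\PP$, so this is equivalent to $c\ge1/S(H)$. Consequently, if $\tilde\eta^*$ is constant it must be the smallest admissible multiple of $\ones_E$, namely $\tilde\eta^*=\tfrac1{S(H)}\ones_E$. By the first-order optimality condition for projection onto the convex set $P(H)$, the point $\tfrac1{S(H)}\ones_E\in P(H)$ is nearest the origin iff $\langle\eta,\ones_E\rangle\ge\tfrac{|E|}{S(H)}$ for all $\eta\in P(H)$; since $\tfrac1{S(H)}\ones_E$ attains equality here, this amounts to $\min_{\eta\in P(H)}\eta(E)=\tfrac{|E|}{S(H)}$.

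The core computation is then $\min_{\eta\in P(H)}\eta(E)=|V|-1$. Indeed, by \cite{ba23al} every extreme point of $P(H)$ is an indicator vector $x^A\in\Omega(H)$ with $V[A]=V$, hence $|A|=|V|-1$ and $x^A(E)=|V|-1$; so the linear functional $\eta\mapsto\eta(E)$ is constant equal to $|V|-1$ on the extreme points of $P(H)$, and since $P(H)$ is a nonempty pointed polyhedron in $\R^E_{\ge0}$ on which this functional is bounded below, its minimum over $P(H)$ is $|V|-1$. (Alternatively, the lower bound $\ge|V|-1$ comes from the constraint attached to the all-singletons partition, and the upper bound from exhibiting any multi-tree, which exists because enough parallel copies of every hyperedge make the multi-hypergraph $(|V|-1)$-partition-connected by Theorem~\ref{thm:nash}, and whose indicator lies in $P(H)$ by the rank formula of Theorem~\ref{rank}.) I expect the main obstacle to be assembling these imported facts about $\Omega(H)$ and the extreme points of $P(H)$ cleanly, in particular the multiset/rank bookkeeping for multi-hypergraphs; once in hand, the rest is routine. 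Combining the above, $H$ is homogeneous if and only if $S(H)=\tfrac{|E|}{|V|-1}$.

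Finally I would match this condition with the equality $S(H)=D(H)$. Since $E[V]=E$, one always has $D(H)\ge\tfrac{|E|}{|V|-1}$, and testing $S(H)$ against the all-singletons partition gives $S(H)\le\tfrac{|E|}{|V|-1}$; hence $S(H)=D(H)$ forces both to equal $\tfrac{|E|}{|V|-1}$, so $H$ is homogeneous. Conversely, if $S(H)=\tfrac{|E|}{|V|-1}$, then for any $X\subsetneq V$ with $|X|\ge2$, testing $S(H)$ against the partition $\{X\}\cup\{\{v\}:v\notin X\}$ --- whose crossing set is contained in $E\setminus E[X]$ --- yields $\tfrac{|E|}{|V|-1}=S(H)\le\tfrac{|E|-|E[X]|}{|V|-|X|}$, which rearranges to $\tfrac{|E[X]|}{|X|-1}\le\tfrac{|E|}{|V|-1}$ (the case $X=V$ being trivial); taking the maximum over $X$ gives $D(H)\le\tfrac{|E|}{|V|-1}=S(H)$, whence $S(H)=D(H)$. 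When $H$ is partition-connected this is consistent with Theorem~\ref{thm:pc-hom} via Theorem~\ref{thm:mod-mod}, a useful sanity check.
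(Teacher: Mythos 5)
Your proof is correct, but it takes a genuinely different route from the paper's. The paper obtains Theorem \ref{thm:c-hom} as an immediate corollary of Theorem \ref{thm:main2}, which identifies $1/\tilde{\eta}^*_{\max}=S(H)$ and $1/\tilde{\eta}^*_{\min}=D(H)$; that theorem is in turn proved by lifting $H$ to the parallel-copy hypergraph $H^t$, applying the symmetry reduction of Section \ref{sec:sym}, and importing the matroid base-modulus results (Theorems \ref{main1} and \ref{thm:mod:f}, built on \cite{basemodulus}). You instead work directly with the partition polyhedron $P(H)=\Adm(\widehat{\Omega})$, characterize $\tilde{\eta}^*$ as the Euclidean projection of the origin onto $P(H)$, and use the first-order optimality condition to reduce homogeneity to the linear-programming identity $\min_{\eta\in P(H)}\eta(E)=|E|/S(H)$. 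The computation $\min_{\eta\in P(H)}\eta(E)=|V|-1$ is clean (lower bound from the all-singletons partition constraint, upper bound from the fact that every extreme point of $P(H)$ lies in $\Omega(H)$ and hence has coordinate sum $|V|-1$, with nonemptiness of $\Omega(H)$ supplied by Theorem \ref{thm:nash} applied to $H^t$), and the final equivalence $S(H)=|E|/(|V|-1)\iff S(H)=D(H)$ is an elementary manipulation of the definitions \eqref{eq:S}--\eqref{eq:D} via the partitions $\{X\}\cup\{\{v\}:v\notin X\}$. What your approach buys is a self-contained proof of this theorem that bypasses $H^t$, the symmetry machinery, and the matroid modulus theory, relying only on the integrality of the extreme points of $P(H)$ from \cite{ba23al}; it also isolates the pleasant intermediate characterization that $H$ is homogeneous iff $S(H)=\theta(H)=|E|/(|V|-1)$. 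What it does not give is the quantitative content of Theorem \ref{thm:main2} in the inhomogeneous case (the identification of $\tilde{\eta}^*_{\max}$ and $\tilde{\eta}^*_{\min}$ with $1/S(H)$ and $1/D(H)$ and the optimality of $E_{\max}$, $E_{\min}$), which the paper needs anyway to drive the decomposition process.
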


	\begin{figure}[t]
		\centering
		\includegraphics[scale=.6]{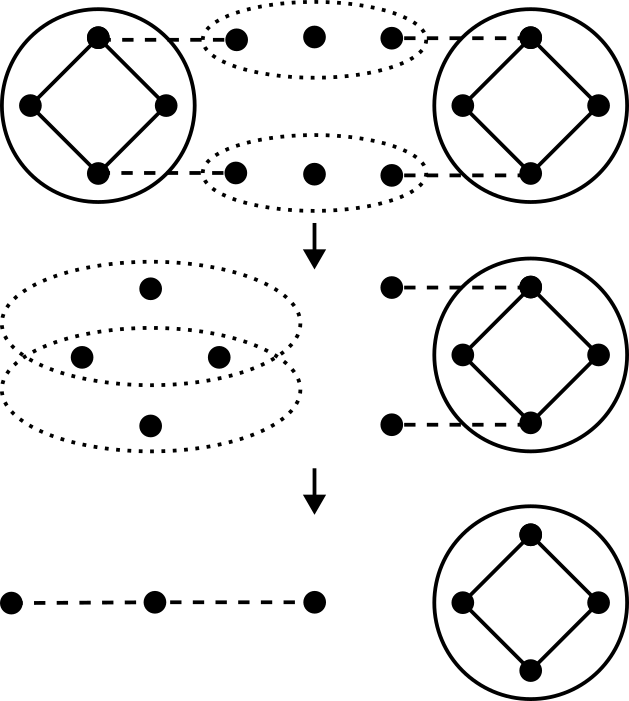}
		\caption{A hypergraph where hyperedges are styled according to the optimal density $\eta^*$. Solid edges are present with $\eta^* = \frac{3}{5}$, dashed edges with $\eta^* = 1$ and dotted edges with  $\eta^* = \frac{3}{2}$.}
		\label{fig:example2} 
	\end{figure}
	
	Finally, we present an example of $\Mod_{2}(\widehat{\Omega})$ revealing a hierarchical structure of a hypergraph $H$.
	\begin{example}\label{ex:fig2} Figure \ref{fig:example2} describes an example of the Hypergraph Decomposition Process discussed in Section \ref{sec:omega}. Consider a connected hypergraph $H$ in the top of Figure \ref{fig:example2}. Here, the optimal density $\eta^*$ of $\Mod_2(\widehat{\Omega}(H))$ takes three distinct values, indicated by the edge styles as described in the figure caption. 
		First, we remove the dotted hyperedges from $H$. As the result, we obtain two identical connected hypergraphs, denoted by $H_1$ and $H_2$, displayed in the middle right of the figure, and two isolated vertices. On the other hand, we shrink $H_1$ and $H_2$ in the original hypergraph $H$, the resulting shrunk graph is the hypergraph, denoted by $K$, displayed in the middle left. Notice that the hyperedge set of $H$ is the disjoint union of the hyperedge sets of $H_1,H_2$ and $K$. So, we say that $H$ decomposes into $H_1,H_2$, two isolated vertices and $K$. Next, for $H_1$ (similarly for $H_2$), we continue to decompose it by removing the dashed hyperedges from $H_1$. Then, we obtain a connected hypergraph displayed in the bottom right, denoted by $N$, and two isolated vertices. In $H_1$, we shrink $N$ to a single vertex to get the line hypergraph $L$ in the bottom left. The decomposition process stops here since all the created hypergraphs (isolated vertices, $K$, $N$ and $L$) are homogeneous.
	\end{example}
	\textbf{Organization of this paper:}
	\begin{itemize}
		\item Section \ref{sec:prel} introduces key definitions and notations.
		\item Section \ref{sec:Fulkerson} explores Fulkerson duality for the 
		system of partition inequalities and $\Omega(H)$. Specifically, we establish Theorem \ref{thm:fulkerson} which provides a minimal description of the system of partition inequalities, see Remark \ref{re:minimal}.
		
		\item Section \ref{sec:hypertree} focuses on the modulus of the family 
		of hypertrees in a hypergraph, referred to as the {\it hypertree modulus}.
		\item Section \ref{sec:omega} examines the modulus of $\Omega(H)$, 
		referred to as the {\it multi-tree modulus}.
	\end{itemize}
	
	\section{Preliminaries}\label{sec:prel}
	\subsection{Hypergraphs and hypertrees}\label{sec:prel1}
	
	A hypergraph $H$ is defined as a pair $(V, E)$, where $V$ is 
	the set of vertices and $E$ is a collection of nonempty subsets 
	of $V$, called hyperedges. The hyperedges in $E$ are not 
	necessarily distinct. A hyperedge consisting of a single 
	vertex is called a loop, and two hyperedges containing the 
	same vertices are called parallel hyperedges. In this paper, 
	we consider hypergraphs with no loops.

	Let $H = (V, E)$ be a hypergraph. For a subset $F \subseteq E$, 
	we denote $V[F] \subset V$ as the set of vertices in 
	$\bigcup_{e \in F} e$. The hypergraph $(V[F], F)$ is referred 
	to as the {\it subhypergraph induced by the hyperedge subset $F$} 
	and is denoted by $H[F]$. 
	Next, let $t$ be a positive integer. The hypergraph 
	$H^t = (V, E^t)$ is defined to have the same vertex set $V$, 
	where $E^t$ is constructed by replacing each hyperedge 
	$e \in E$ with a set of $t$ parallel hyperedges 
	$\{e^1, \dots, e^t\}$. The set of $t$ parallel hyperedges 
	$\{e^1, \dots, e^t\}$ in $E^t$ is called the 
	{\it parallel group} corresponding to the hyperedge $e$ 
	and is denoted by $g_e$. 
	Let $F$ be a subset of $E^t$. We define a multiset $F'$ 
	from $E$ generated by $F$, where the multiplicity of each 
	hyperedge $e \in E$ in $F'$ is the cardinality of 
	$F \cap g_e$.

	A hypergraph $H$ is said to be {\it forest-representable} or {\it wooded}, if it is possible to select two distinct vertices from each hyperedge of $H$ such that the chosen pairs, when considered as graph edges on $V$, form a forest. If the representing forest can be made connected, then $H$ is called {\it tree-representable}. Lov\'asz showed in \cite{lovasz70} that a hypergraph $H$ is a hyperforest if and only if it is wooded.
	
	For any nonempty subset $F \subseteq E(H)$, the density of $F$ is defined to be \[
	\theta_H(F) = \frac{|F|}{|V(H[F])| - 1}.
	\]
	We often use $\theta(H)$ to denote $\theta_{H}(E(H))$. Let $S(H)$ and $D(H)$ be the strength and  the fractional arboricity of $H$ defined as in (\ref{eq:S}) and (\ref{eq:D}), respectively. It follows immediately that for any loopless nontrivial hypergraph $H$, we have
	\[
	S(H) \leq \theta(H) \leq D(H).
	\]
	Moreover, it is also straightforward that
	\begin{equation*}
		S(H) = \min \{ \theta(H/F):F \subset E(H),|V(H/F)| \geq 2  \},
	\end{equation*}
	and
	\begin{equation*}
		D(H) = \max \{ \theta_H(F): F \subset E(H),|V(H[F])| \geq 2 \}.
	\end{equation*}

\subsection{Discrete modulus}
Let $E$ be a finite set with given weights 
$\sigma \in \mathbb{R}^E_{>0}$ assigned to each 
element $e$ in $E$. We say that $\Gamma$ is a family 
of objects in $E$ if, for each object $\gamma \in \Gamma$, 
there is an associated function  
$\mathcal{N}(\gamma, \cdot)^T: E \to \mathbb{R}_{\geq 0}$, 
which we interpret as a {\it usage vector} in 
$\mathbb{R}^E_{\geq 0}$. In other words, $\Gamma$ is 
associated with a $|\Gamma| \times |E|$ 
{\it usage matrix} $\mathcal{N}$. 
From now on, we assume that $\Gamma$ is nonempty and 
that each object $\gamma \in \Gamma$ uses at least one 
element in $E$ with a positive and finite amount.

A {\it density} $\rho$ is a vector in 
$\mathbb{R}^E_{\geq 0}$. For each $\gamma \in \Gamma$, 
we define the {\it total usage cost} 
$\ell_{\rho}(\gamma)$ of $\gamma$ with respect to $\rho$ as
\[
\ell_{\rho}(\gamma) = \sum\limits_{e \in E} 
\mathcal{N}(\gamma, e) \rho(e).
\]
A density $\rho \in \R^E_{\geq 0 }$ is  called {\it admissible} for $\Ga$, if for all $\ga\in \Ga$,
	$ \ell_{\rho}(\ga) \geq 1 .$
	The {\it admissible set} $\Adm(\Ga)$ of $\Ga$ is defined as the set of all admissible densities for $\Ga$,
	\begin{equation}\label{eq:adm-set}
		\Adm(\Ga) := \left\{ \rho \in \R^E_{\geq 0 }: \cN\rho \geq \one \right\}. 
	\end{equation} 
	Fix $1 \leq p < \infty$, the {\it energy} of the density  $\rho$ is defined as \[\cE_{p,\si}(\rho):=\sum\limits_{e \in E}\si(e)\rho(e)^p.\]
	
	\begin{definition}\label{def:mod}
		The {\it $p$-modulus} of $\Gamma$ is
		\begin{equation}\label{modp}
			\Mod_{p,\sigma}(\Gamma):= 
			\inf\limits_{\rho \in \Adm(\Gamma)} 
			\mathcal{E}_{p,\sigma}(\rho).
		\end{equation}
	\end{definition}
	
	Here is an example.
	
	\begin{example}\label{ex:mod1-s}
		Consider the hypergraph $H$ in Figure \ref{fig:example1}. 
		The hypertree family $\Gamma(H)$ contains only one 
		hypertree $\{e_1,e_2\}$ with the indicator vector $(1,1)$. 
		If we are given the weights $\sigma = (1,2)$, the strength 
		of $H$ is $ S_{\sigma}(H) = 3/2$, while 
		$\Mod_{1,\sigma}(\Gamma(H)) = 1$.
	\end{example} 
	
	When $\si$ is the vector of all ones, we omit $\si$ and write $\cE_{p}(\rho) := \cE_{p,\si}(\rho)$ and $\Mod_{p}(\Ga) :=\Mod_{p,\si}(\Ga)$.
	Next, let $K$ be a closed convex set in $\R^E_{\geq 0}$ such that $ \varnothing \subsetneq K \subsetneq \R^E_{\geq 0}$ and $K$ is {\it recessive}, meaning that $K +\R^E_{\geq 0} = K$. The {\it blocker} $\BL(K)$ of $K$ is defined as, \[\BL(K) = \lbr \eta \in \R^E_{\geq 0} : \eta^{T}\rho \geq 1, \forall \rho \in K \rbr. \] Note that the admissible set  $\Adm(\Ga)$ defined in (\ref{eq:adm-set}) is closed, convex in $ \R^E_{\geq 0}$ and recessive.
	We will routinely identify $\Ga$ with the set of its usage vectors $\left\{ \cN(\ga,\cdot )^T: \ga \in \Ga \right\}$ in $\R^E_{\geq 0}$, hence we can write $\Ga \subset \R^E_{\geq 0 }$. The {\it dominant} of $\Ga$ is defined as
	\[ \Dom(\Ga):= \co(\Ga) + \R^E_{\geq 0},\]
	where $\co(\Ga)$ denotes the convex hull of $\Ga$ in $\R^E$. We recall Fulkerson duality for modulus.
	\begin{definition}\label{def:blocker} Let $\Ga$ be a family of objects on a finite ground set $E$.
		The {\it Fulkerson blocker family} $\widehat{\Ga}$ of $\Ga$ is defined as the set of all the extreme points of $\Adm(\Ga)$:
		\[ \widehat{\Ga} := \Ext\left(\Adm(\Ga)\right) \subset  \R^E_{\geq 0}.\]
	\end{definition}
	Fulkerson blocker duality \cite{fulkersonblocking} states that
	\begin{equation}\label{eq:dom-adm-block-hat}
		\Dom(\widehat{\Ga})= \Adm(\Ga) = \BL(\Adm(\Gahat)),
	\end{equation}
	\begin{equation}\label{eq:dom-adm-block}
		\Dom(\Ga)= \Adm(\widehat{\Ga}) = \BL\left(\Adm(\Ga)\right).
	\end{equation}
	Moreover,  $\widehat{\Ga}$  has its own Fulkerson blocker family, and $\widehat{\widehat{\Ga}}  \subset \Ga$.
	\begin{definition}\label{def:fulkerson-dual}
		Let $\Ga$ and $\widetilde{\Ga}$ be two sets of vectors in $\R^E_{\geq 0}$. We say that $\Ga$ and $\widetilde{\Ga}$ are a {\it Fulkerson dual pair} (or $\Gatil$ is a {\it Fulkerson dual family} of $\Ga$) if \[\Adm(\Gatil) = \BL(\Adm(\Ga)).\] 
	\end{definition}
	Note that if $\Gatil$ is a Fulkerson dual family of $\Ga$, then $\Ga$ is also a Fulkerson dual family of $\Gatil$. Moreover, the Fulkerson blocker family $\Gahat$ is the smallest Fulkerson dual family of $\Ga$, meaning that $\Gahat \subset \Gatil.$
	
	When  $1<p < \infty$, let $q:=p/(p-1)$ be the Hölder conjugate exponent of $p$. For any set of weights $\si \in \R^E_{>0}$,  define the dual set of weights $\widetilde{\si}$ as $\widetilde{\si}(e):=\si(e)^{-\frac{q}{p}}$ for all $e\in E$. Let $\Gatil$ be a Fulkerson dual family of $\Ga$. Fulkerson duality for modulus  \cite[Theorem 3.7]{pietroblocking} states that
	\begin{equation}
		\Mod_{p,\si}(\Ga)^{\frac{1}{p}}\Mod_{q,\widetilde{\si}}(\widetilde{\Ga})^{\frac{1}{q}}=1.
	\end{equation}
	Moreover, the optimal $\rho^*$ of $\Mod_{p,\si}(\Ga) $ and the optimal $\eta^*$ of $\Mod_{q,\widetilde{\si}}(\widetilde{\Ga})$ always exist, are unique, and are related as follows,
	\begin{equation}\label{eq:weighted-eta-rho}
		\eta^{\ast}(e) = \frac{\si(e)\rho^{\ast}(e)^{p-1}}{\Mod_{p,\si}(\Ga)}, \quad \forall e\in E.
	\end{equation}
	When $p=2$, we have
	\begin{equation}\label{eq:mod2}
		\Mod_{2,\si}(\Ga)\Mod_{2,\si^{-1}}(\widetilde{\Ga})=1 \qquad\text{and}\qquad   \displaystyle \eta^{\ast}(e) = \frac{\si(e)}{\Mod_{2,\si}(\Ga)}\rho^{\ast}(e) \quad \forall  e\in E.
	\end{equation}
	According to the probabilistic interpretation of modulus \cite{pietrominimal}, we can express
	
	\begin{equation}\label{eq:modmeo}
		\Mod_2(\Ga)^{-1} = \min\limits_{\mu \in \cP(\Ga)} \mu^T\cN\cN^T\mu = \min\limits_{\eta \in \co(\Ga)} \sum\limits_{e \in E} \eta^2(e),
	\end{equation}
	where $\cP(\Ga)$ is the set of all probability mass functions in $\R^\Ga_{\geq 0}$ associated with $\Ga$.
	The middle optimization problem in (\ref{eq:modmeo}) is known as the {\it minimum expected overlap} (MEO) problem for $\Ga$, see \cite{pietrofairest,basemodulus}. And the optimization problem on the right side of (\ref{eq:modmeo}) is equivalent to $\Mod_2(\Gahat)$ since $\Adm(\Gahat) = \Dom(\Ga) = \co(\Ga) + \R^E_{\geq 0}$.

		\section{Fulkerson duality for hypertrees}\label{sec:Fulkerson}
		In this section, we prove Theorem \ref{thm:fulkerson}. 
		To start, we define the usage vectors of the family of 
		feasible partitions.
		
		\begin{definition}
			Let $H =(V,E)$ be a connected hypergraph. Let $\Phi$ 
			denote the family of all feasible partitions $\PP$ 
			of $H$, with the usage vectors defined as:
			\begin{equation}\label{usage2}
				\widetilde{\mathcal{N}}(\PP,\cdot)^T :=
				\frac{1}{|\PP|-1} \ones_{\delta(\PP)}.
			\end{equation}
		\end{definition}
		
		Then, the admissible set of $\Phi$ is given by:
		\begin{equation*}
			\Adm(\Phi) = \left\{ \eta \in \R^E_{\geq 0} :  \widetilde{\cN} \eta \geq \one \right\}.
		\end{equation*}
		Let $H =(V,E)$ be a connected hypergraph. Let $\Omega=\Omega(H)$ be the multi-tree family defined in Definition \ref{def:omega}. We want to study Fulkerson duality between two families  $\Phi$ and $\Omega$. We start with the following lemma.
		\begin{lemma}\label{lem:phi-P}
			The admissible set  $\Adm(\Phi)$ of $\Phi$ is equal to the partition polyhedron $P$ defined as in \eqref{in1}-\eqref{in2}.  
		\end{lemma}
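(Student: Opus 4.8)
The plan is to establish the two inclusions $P \subseteq \Adm(\Phi)$ and $\Adm(\Phi) \subseteq P$ directly. Writing the admissibility condition $\widetilde{\cN}\eta \geq \one$ out componentwise, the row indexed by a feasible partition $\PP$ reads $\frac{1}{|\PP|-1}\,\eta(\delta(\PP)) \geq 1$, that is, $\eta(\delta(\PP)) \geq |\PP|-1$. Since every feasible partition is in particular a partition of $V$, the inequalities defining $\Adm(\Phi)$ form a subset of those defining $P$; hence $P \subseteq \Adm(\Phi)$ is immediate.

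The content of the lemma is the reverse inclusion. Fix $\eta \in \Adm(\Phi)$ and an arbitrary partition $\PP = \{V_1, \ldots, V_k\}$ of $V$; I must verify $\eta(\delta(\PP)) \geq |\PP|-1$. If $k = 1$ the right-hand side is $0$ and the inequality holds because $\eta \geq 0$, so assume $k \geq 2$. Refine $\PP$ to the partition $\PP'$ obtained by replacing each $V_i$ by the vertex sets of the connected components of the induced subhypergraph $H[V_i]$. Each part of $\PP'$ then induces a connected subhypergraph, so $\PP'$ is feasible, and refining can only increase the number of parts, so $|\PP'| \geq |\PP| \geq 2$; in particular $\PP'$ indexes a genuine constraint of $\Adm(\Phi)$.

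The crux is the claim $\delta(\PP') = \delta(\PP)$. The inclusion $\delta(\PP) \subseteq \delta(\PP')$ is clear since $\PP'$ refines $\PP$. For the opposite inclusion, suppose a hyperedge $e$ lies in $\delta(\PP') \setminus \delta(\PP)$. Then $e$ is contained in a single part $V_i$ of $\PP$, so $e \in E[V_i]$, yet $e$ meets at least two connected components of $H[V_i]$. But the vertex set of a single hyperedge always lies within one connected component of any subhypergraph containing it (a lone hyperedge spans a connected subhypergraph on its own vertices), a contradiction. Granting the claim, feasibility of $\PP'$ yields $\eta(\delta(\PP)) = \eta(\delta(\PP')) \geq |\PP'| - 1 \geq |\PP| - 1$, which completes the reverse inclusion and hence the proof.

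I expect no serious obstacle here; the only subtlety is recognizing that passing to this ``component refinement'' does not alter the cut set $\delta(\cdot)$, which rests on the elementary fact that no hyperedge can straddle two components of an induced subhypergraph. The degenerate case $|\PP| = 1$ must also be acknowledged, but it is handled trivially by nonnegativity of $\eta$.
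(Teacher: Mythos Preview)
Your proof is correct and follows essentially the same approach as the paper: refine an arbitrary partition by replacing each part with the connected components of its induced subhypergraph, observe that the refinement is feasible with at least as many parts and an unchanged cut set, and conclude. Your write-up is in fact more careful than the paper's, as you explicitly justify $\delta(\PP') = \delta(\PP)$ and handle the degenerate case $|\PP| = 1$.
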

		
		\begin{proof}
			By definition, we have that $ P \subset \Adm(\Phi)$. Let $\PP = \{V_1,\dots,V_k\}$ be a partition of $V$. For each $i =1,\dots,k$, if  $H[V_i]$ is not connected and has at least 2 connected components, we divide the class $V_i$ into smaller classes corresponding to the vertex sets of those connected components. This new partition is feasible, the number of classes is increased and the cut set $\delta(\PP)$ remains unchanged. Therefore $ \Adm(\Phi) \subset P$. Thus, we conclude that $ \Adm(\Phi) = P$.
		\end{proof}
		
		Since $\Adm(\Phi) = P$ where $P$ defined as in \eqref{in1}-\eqref{in2}, as mentioned in the introduction, we have that  \[\widehat{\Phi} \subset \Omega.\] For instance, the hypergraph in Figure \ref{fig:example2} has the partition polyhedron $P=\{(x_1, x_2) \, : \, x_1 + x_2 \ge 2, \, x \ge 0\}$. The extreme points of $P$ are $(2,0)$ and $(0,2)$. We use this fact to achieve the following result.
		\begin{proposition}\label{pro:dualpair}
			We have that \[\Dom(\Omega) = \Adm(\Phi).\] In other words, 
			$\Omega$ and $\Phi$ form a Fulkerson dual pair.
		\end{proposition}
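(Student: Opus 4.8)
The plan is to prove the set identity $\Dom(\Omega)=\Adm(\Phi)$ by establishing the two inclusions separately, and then to read off the ``Fulkerson dual pair'' assertion from the abstract Fulkerson identities \eqref{eq:dom-adm-block-hat}--\eqref{eq:dom-adm-block}. The two external inputs are Lemma \ref{lem:phi-P}, identifying $\Adm(\Phi)$ with the partition polyhedron $P$ of \eqref{in1}--\eqref{in2}, and the fact recalled just above, that every extreme point of $P$ lies in $\Omega$, i.e. $\widehat{\Phi}=\Ext(\Adm(\Phi))\subseteq\Omega$.

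For the inclusion $\Dom(\Omega)\subseteq\Adm(\Phi)$, since $\Adm(\Phi)=P$ is convex and recessive it suffices to show $\Omega\subseteq P$. Fix $x^A\in\Omega$, so that $H[A]$, viewed as the hypergraph $(V,A)$ on the full vertex set with the multiset $A$ as its hyperedge collection, is a hypertree, hence in particular contains a hypertree (itself). Applying Theorem \ref{thm:nash} to $H[A]$ with $k=1$ yields $|\delta_A(\PP)|\ge|\PP|-1$ for every partition $\PP$ of $V$. The only point needing care is the multiset bookkeeping: $x^A(\delta(\PP))=\sum_{e\in\delta(\PP)}x^A(e)$ counts precisely the hyperedges of $A$ that cross $\PP$, with multiplicity, so $x^A(\delta(\PP))=|\delta_A(\PP)|\ge|\PP|-1$; thus $x^A\in P$. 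Then $\Dom(\Omega)=\co(\Omega)+\R^E_{\ge 0}\subseteq P+\R^E_{\ge 0}=P=\Adm(\Phi)$.

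For the reverse inclusion $\Adm(\Phi)\subseteq\Dom(\Omega)$, I would use $\widehat{\Phi}\subseteq\Omega$: monotonicity of $\Dom$ gives $\Dom(\widehat{\Phi})\subseteq\Dom(\Omega)$, while \eqref{eq:dom-adm-block-hat} applied to $\Phi$ gives $\Dom(\widehat{\Phi})=\Adm(\Phi)$, so $\Adm(\Phi)\subseteq\Dom(\Omega)$. (Alternatively one can invoke the decomposition theorem for polyhedra directly: $P$ is nonempty, line-free since $P\subseteq\R^E_{\ge 0}$, and has recession cone $\R^E_{\ge 0}$, so $P=\co(\Ext(P))+\R^E_{\ge 0}\subseteq\co(\Omega)+\R^E_{\ge 0}=\Dom(\Omega)$.) Combining the two inclusions gives $\Dom(\Omega)=\Adm(\Phi)$. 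Finally, \eqref{eq:dom-adm-block} applied to $\Omega$ reads $\Dom(\Omega)=\BL(\Adm(\Omega))$, hence $\Adm(\Phi)=\BL(\Adm(\Omega))$, which by Definition \ref{def:fulkerson-dual} is exactly the statement that $\Omega$ and $\Phi$ form a Fulkerson dual pair. The genuinely new ingredient is the forward inclusion $\Omega\subseteq P$ — short, but the only place Theorem \ref{thm:nash} is used — and the main thing to get right there is treating $A$ consistently as a multiset so that $x^A(\delta(\PP))$ really equals the crossing-hyperedge count $|\delta_A(\PP)|$; everything else is a routine assembly of Lemma \ref{lem:phi-P}, the cited extreme-point result of \cite{ba23al}, and the abstract Fulkerson-duality identities.
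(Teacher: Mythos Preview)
Your proposal is correct and follows essentially the same approach as the paper: both directions are established the same way, using $\widehat{\Phi}\subset\Omega$ together with \eqref{eq:dom-adm-block-hat} for $\Adm(\Phi)\subset\Dom(\Omega)$, and checking that each $x^A\in\Omega$ satisfies the partition inequalities for $\Dom(\Omega)\subset\Adm(\Phi)$. The only cosmetic difference is that the paper phrases the forward inclusion as ``$H[A]$ is a hypertree, hence partition-connected'' (invoking the definition of partition-connectedness), whereas you cite Theorem~\ref{thm:nash} with $k=1$ explicitly; these are the same step.
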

		
		\begin{proof}
			In \cite{ba23al}, it was shown that every extreme point of 
			the partition polyhedron $P$ defined as in \eqref{in1}-\eqref{in2} 
			belongs to $\Omega(H)$. Hence, by Lemma \ref{lem:phi-P}, we have  
			$\widehat{\Phi} \subset \Omega$. This implies that 
			$\Adm(\Phi) = \Dom(\widehat{\Phi}) \subset \Dom(\Omega)$. 
			
			Let $x^A$ be an indicator vector in $\Omega$, where $A$ is a 
			multiset from $E$. By the definition of $\Omega$, the hypergraph 
			$H[A] = (V(H[A]),E(H[A]))$ is a hypertree with vertex set $V$, 
			and hence it is partition-connected. Thus, for any feasible 
			partition $\PP$ of $V$ in $\Phi(H)$,
			\[
			|\delta_{E(H[A])}(\PP)| \geq |\PP| -1.
			\]
			This is equivalent to 
			\[
			x^{A} \cdot \ones_{\delta_{E(H)}(\PP)} \geq |\PP| -1,
			\]
			for any feasible partition $\PP$ of $V$ in $\Phi(H)$.
			
			Therefore, $x^{A} \in \Adm(\Phi)$. Thus, 
			$\Dom(\Omega) \subset \Adm(\Phi)$. In conclusion, we have 
			$\Dom(\Omega) = \Adm(\Phi)$. 	
		\end{proof}
		
		By the definition of Fulkerson dual pairs, Proposition 
		\ref{pro:dualpair} implies that $\widehat{\Omega} \subset \Phi$. 
		Hence, the remaining task is to describe $\widehat{\Omega}$ 
		precisely. 
		
		Next, we provide the proof of Theorem \ref{thm:fulkerson}. 
		This proof follows a similar strategy from Fulkerson duality 
		for spanning trees in \cite{huyfulkerson}.

	\begin{proof}[Proof of Theorem \ref{thm:fulkerson}]
		Let $\Theta$ represent the family of all feasible 
		partitions $\PP$ of $H$ such that the shrunk 
		hypergraphs $H_{\PP}$ are vertex-biconnected. 
		
		The proof follows from the fact that 
		$\widehat{\Omega} \subset \Phi$ and the two lemmas 
		stated below. From Lemma \ref{lem:cl3}, we have 
		$\Phi \cap \widehat{\Omega} \subseteq \Theta$. 
		Furthermore, Lemma \ref{lem:cl2} establishes that 
		$\Theta \subset \widehat{\Omega}$.
	\end{proof}
	
	\begin{remark}\label{re:minimal}
		The Fulkerson blocker family $\widehat{\Omega}$ gives a 
		minimal inequality description $\Adm(\widehat{\Omega})$ 
		of the partition polyhedron $P$ defined as in 
		\eqref{in1}-\eqref{in2}. This is a common theme in 
		the study of various convex polytopes.
	\end{remark}
	
		\begin{lemma} \label{lem:cl3}
			We have  $ \Phi \cap \widehat{\Omega} \subseteq \Theta $.
		\end{lemma}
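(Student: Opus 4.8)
The plan is to show that if a feasible partition $\PP$ is an extreme point of $\Adm(\Omega)$, then its shrunk hypergraph $H_{\PP}$ is vertex-biconnected; equivalently, to prove the contrapositive: if $H_{\PP}$ has a cut vertex, then $\frac{1}{|\PP|-1}\ones_{\delta(\PP)}$ is not an extreme point of $\Adm(\Omega)$. Since we already know $\widehat{\Omega}\subset\Phi$ (from Proposition \ref{pro:dualpair}), every vector in $\Phi\cap\widehat{\Omega}$ has the form $\widetilde{\cN}(\PP,\cdot)^T = \frac{1}{|\PP|-1}\ones_{\delta(\PP)}$ for a feasible partition $\PP$, so it suffices to rule out those $\PP$ whose shrunk hypergraph fails to be vertex-biconnected.

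First I would set up the reduction to the shrunk hypergraph. Note that $\delta_H(\PP)$ is exactly the hyperedge set of $H_{\PP} = H/(E\setminus\delta(\PP))$, and the partitions of $V$ refining-compatibly with $\PP$ correspond to partitions of $V(H_{\PP})$; moreover the quantity $|\PP|-1$ equals $|V(H_{\PP})|-1$ since each part $V_i$ becomes a single vertex (the parts are connected, as $\PP$ is feasible) and no loops survive. Thus membership of $\eta = \frac{1}{|\PP|-1}\ones_{\delta(\PP)}$ in $\Adm(\Omega(H))$, and its being an extreme point thereof, transfers to the corresponding statement about $\frac{1}{|V(H_{\PP})|-1}\ones_{E(H_{\PP})}$ in $\Adm(\Omega(H_{\PP}))$ — the partition inequalities \eqref{in1} for $H$ restricted to partitions coarsening $\PP$ are precisely those for $H_{\PP}$, and the inequalities that separate parts of $\PP$ are not tight at $\eta$ so they do not affect which face $\eta$ lies in. This lets me assume without loss of generality that $\PP$ is the partition into singletons and $H = H_{\PP}$, so the claim becomes: if $H$ is partition-connected and $\ones_E/(|V|-1)$ is an extreme point of $P(H)$, then $H$ is vertex-biconnected.

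Next, suppose $v$ is a cut vertex of $H$, so $H\setminus v$ splits into components inducing vertex sets $X_1,\dots,X_m$ with $m\ge 2$. Write $E_j$ for the hyperedges meeting $X_j\cup\{v\}$ but no other $X_i$; these, together with any hyperedges through $v$ that I'll need to account for carefully, partition $E$ once I check that every hyperedge not containing $v$ lies entirely in one $X_j$. I would then perturb $\ones_E$ by adding a small $\pm\epsilon$ to the coordinates in $E_1$ and a compensating $\mp\epsilon$ on the coordinates in $E_2$, balancing the perturbation so that every partition inequality $x(\delta(\PP))\ge|\PP|-1$ that is tight at $\ones_E/(|V|-1)$ remains satisfied (indeed remains an equality) — the point is that a tight partition $\PP$ has $|\delta(\PP)| = |\PP|-1$, which forces $\PP$ to be ``laminar-like'' across the cut vertex, so the perturbation splits into independent balanced perturbations on the two sides. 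Exhibiting two admissible points whose average is $\ones_E/(|V|-1)$ then contradicts extremality.

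The main obstacle I anticipate is the bookkeeping around the cut vertex $v$ and hyperedges that contain $v$ together with vertices from several $X_j$: in a hypergraph (unlike a graph) a single hyperedge can straddle many of the pieces $X_j$, so the perturbation must be designed to leave such hyperedges' coordinates untouched, or the tightness analysis of the separating partitions must be done on the contracted hypergraphs $H/(V\setminus(X_j\cup\{v\}))$ one side at a time. I expect the cleanest route is to argue that a tight partition cannot separate $v$'s ``side-1 neighborhood'' from its ``side-2 neighborhood'' nontrivially, reducing everything to perturbing within $E_1$ alone against a fixed slack on the $v$-straddling edges; the details of why such tight partitions decompose is where the real work lies, and it mirrors the spanning-tree argument in \cite{huyfulkerson} that the proof of Theorem \ref{thm:fulkerson} says it follows.
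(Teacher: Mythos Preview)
Your overall strategy---contrapositive via a cut vertex of $H_{\PP}$---matches the paper, but both your execution and your ``main obstacle'' diverge from what is actually needed.

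First, the obstacle you anticipate does not exist. By the paper's definition of vertex deletion, $H_{\PP}\setminus v$ is obtained by removing $v$ from every hyperedge. If some hyperedge of $H_{\PP}$ contained $v$ together with vertices from two different components $X_i,X_j$, then after deleting $v$ that hyperedge would still connect $X_i$ and $X_j$, contradicting that $v$ is a cut vertex. Hence every hyperedge of $\delta(\PP)=E(H_{\PP})$ lies entirely within one ``side'' $X_j\cup\{v\}$, and the sets you call $E_j$ already partition $\delta(\PP)$ with no straddling edges to worry about.

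Second, the paper avoids both your reduction to $H_{\PP}$ and your perturbation/tightness analysis. Working directly in $H$, it groups the components of $H_{\PP}\setminus v_j$ into two nonempty sides $A,B$ and forms two \emph{coarser} feasible partitions of $V(H)$: $\PP_1$ merges all parts indexed by $B\cup\{j\}$ into one block, and $\PP_2$ merges all parts indexed by $A\cup\{j\}$. Because $\delta(\PP_1)$ and $\delta(\PP_2)$ partition $\delta(\PP)$ (by the no-straddling observation above) and $(|\PP_1|-1)+(|\PP_2|-1)=|\PP|-1$, one gets the exact identity
\[
\frac{1}{|\PP|-1}\ones_{\delta(\PP)}=\frac{|\PP_1|-1}{|\PP|-1}\cdot\frac{1}{|\PP_1|-1}\ones_{\delta(\PP_1)}+\frac{|\PP_2|-1}{|\PP|-1}\cdot\frac{1}{|\PP_2|-1}\ones_{\delta(\PP_2)}.
\]
Both summands are usage vectors of feasible partitions, hence lie in $\Phi\subset\Adm(\Omega)$ (using $\Adm(\Omega)=\Dom(\Phi)$ from Proposition~\ref{pro:dualpair}); they are distinct, so $\frac{1}{|\PP|-1}\ones_{\delta(\PP)}$ is not extreme. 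This bypasses any need to identify the tight faces at $\eta$ or to justify that extremality transfers along the shrinking map, which in your sketch are asserted rather than proved.
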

		\begin{proof} 
			Let $\PP \in \Phi \cap \widehat{\Omega}$. Suppose $\PP= \left\{ V_1,V_2,...,V_{k} \right\}$, each $H[V_i]$ shrinks to a vertex denoted by $v_i$ in  $H_{\PP}$. If $H_{\PP}$ is vertex-biconnected, then $P\in \Theta$. Assume that  $H_{\PP}$ is not vertex-biconnected. Then, there is a vertex $v_j$ such that the deletion of $v_j$ disconnects $H_{\PP}$ into two subhypergraphs $H_1$ and $H_2$ of $H_{\PP}$, each of them can have one or several connected components. Denote $A := \lbrace i: v_i \in \text{the vertex set of } H_1 \rbrace$. Denote $B := \lbrace i: v_i \in \text {the vertex set of } H_2 \rbrace$. Then,  $A,B$ and $\lbrace j\rbrace$ are disjoint and $A \cup B \cup \lbrace j\rbrace  = \lbrace 1,2\dots,k \rbrace$.
			Let \[  \displaystyle M := \left( \bigcup_{i\in B \cup \left\{ j\right\} } V_i \right)\subset V(H),\] we define a partition $\PP_1$ of the vertex set $V$ as follows: 
			\[  \PP_1 := \left\{ V_i: i \in A \right\}  \cup   \left\{ M \right\}.\] 
			Let \[  \displaystyle N := \left( \bigcup_{i\in A \cup \left\{ j\right\} } V_i  \right)\subset V(H) ,\]  we define a partition $\PP_2$ of the vertex set $V$ as follows: 
			\[  \PP_2 := \left\{ V_i: i \in B \right\}  \cup   \left\{ N \right\}. \]
			
			Since $v_j$ is connected with all the connected components of $H_1$ within $H_{\PP} \setminus H_2$, then the subhypergraph of $H_{\PP}$ induced by $A \cup \lbrace j \rbrace$  is connected. Therefore, the subhypergraph of $H$ induced by $N$ is connected. Hence, the partition $\PP_2$ is feasible. Similarly, the partition $\PP_1$ is feasible. Note that we have $|\PP_1| = |A|+1$ and $|\PP_2| = |B|+1$. Thus  $|\PP_1|+|\PP_2|=  |A|+1+|B|+1 =|\PP| +1$.  We also have that $\delta{\PP_1} \cup \delta{\PP_2} =\delta{\PP}$ and $\delta{\PP_1} \cap \delta{\PP_2} = \emptyset$. Therefore,

			\begin{equation}\label{convex}
				\frac{1}{|\PP|-1}\mathbbm{1}_{\delta(\PP)}= \frac{|\PP_1|-1}{|\PP|-1}.\frac{1}{|\PP_1|-1}\mathbbm{1}_{\delta(\PP_1)}+ \frac{|\PP_2|-1}{|\PP|-1}.\frac{1}{|\PP_2|-1}\mathbbm{1}_{\delta(\PP_2)}.
			\end{equation}
			Notice that $\frac{|\PP_1|-1}{|\PP|-1}+\frac{|\PP_2|-1}{|\PP|-1}=1$. So $\frac{1}{|\PP|-1}\mathbbm{1}_{\delta(\PP)}$ is a convex combination of  $\frac{1}{|\PP_1|-1}\mathbbm{1}_{\delta(\PP_1)}$ and  $\frac{1}{|\PP_2|-1}\mathbbm{1}_{\delta(\PP_2)}$. Since $\frac{1}{|\PP_1|-1}\mathbbm{1}_{\delta(\PP_1)}$,
			$\frac{1}{|\PP_2|-1}\mathbbm{1}_{\delta(\PP_2)}$$\in \Adm(\Omega)$ and  $\frac{1}{|\PP_1|-1}\mathbbm{1}_{\delta(\PP_1)} \neq$
			$\frac{1}{|\PP_2|-1}\mathbbm{1}_{\delta(\PP_2)}$, we have  $\frac{1}{|\PP|-1}\mathbbm{1}_{\delta(\PP)} \notin \widehat{\Omega}$, this is a  contradiction. Therefore, $H_{\PP}$ is vertex-biconnected and $\PP\in \Theta$.
		\end{proof}
		Before providing the next lemma, we give a remark as follows.
		\begin{remark}\label{rem:shrunk-pc}
			Let $\PP$ be a feasible partition of a hypergraph $H$. If $H$ is partition-connected, then the shrunk graph $H_P$ is also partition-connected.
		\end{remark}
		
		\begin{lemma} \label{lem:cl2}
			We have  $\Theta \subset \widehat{\Omega}$.
		\end{lemma}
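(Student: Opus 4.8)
The plan is to fix a feasible partition $\PP\in\Theta$, write $k:=|\PP|\ge 2$ and $\eta_0:=\frac{1}{k-1}\ones_{\delta(\PP)}\in\R^E$, and show that $\eta_0$ is an extreme point of $\Adm(\Omega)$, i.e.\ $\eta_0\in\widehat{\Omega}$; since $\PP$ is arbitrary this yields $\Theta\subseteq\widehat{\Omega}$. First one checks $\eta_0\in\Adm(\Omega)$: for any multi-hypertree $A$ of $H$ the hypergraph $H[A]$ is partition-connected, so $|\delta_A(\PP)|\ge k-1$ and hence $x^A\cdot\eta_0\ge 1$. To prove extremality I would argue by contradiction: suppose $\eta_0=\tfrac12(\rho_1+\rho_2)$ with $\rho_1\ne\rho_2$ in $\Adm(\Omega)$, and set $d:=\rho_1-\eta_0=\eta_0-\rho_2\ne 0$, so that $\eta_0\pm d\in\Adm(\Omega)$. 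From $\rho_1,\rho_2\ge 0$ and $\eta_0(e)=0$ for $e\notin\delta(\PP)$ we get $d(e)=0$ there, i.e.\ $\supp(d)\subseteq\delta(\PP)$; and since $x^A\cdot\eta_0=1\le x^A\cdot(\eta_0\pm d)$ for every \emph{tight} multi-hypertree $A$ of $H$ (one with $|\delta_A(\PP)|=k-1$), we get $x^A\cdot d=0$ for all such $A$.

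Next I would transfer this to the shrunk hypergraph $H_{\PP}$. Contraction deletes none of the hyperedges of $\delta(\PP)$ and keeps distinct copies distinct, so there is a bijection $E(H_{\PP})\leftrightarrow\delta(\PP)$, under which we regard $d$ as a vector in $\R^{E(H_\PP)}$. The crucial claim is that \emph{every} $x^B\in\Omega(H_\PP)$ lifts to a tight multi-hypertree $A$ of $H$ with $x^A(e)=x^B(e)$ for all $e\in\delta(\PP)$. To build $A$: since $\PP$ is feasible, each $H[V_i]$ is connected, hence $\Omega(H[V_i])\ne\varnothing$; choose $T_i\in\Omega(H[V_i])$ and let $A$ be the multiset union of the preimage of $B$ in $\delta(\PP)$ together with $T_1,\dots,T_k$. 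Then $|A|=(k-1)+\sum_i(|V_i|-1)=|V|-1$, $V[A]=V$, and the hyperforest inequalities $|A[X]|\le|X|-1$ hold: splitting $X$ along the parts $V_i$ and putting $I:=\{i:X\cap V_i\ne\varnothing\}$, the $T_i$'s contribute at most $|X|-|I|$ while the $B$-part contributes at most $|I|-1$ by the hyperforest property of $B$ in $H_\PP$. Thus $A$ is a multi-hypertree of $H$ with $\delta_A(\PP)$ equal to the preimage of $B$, hence tight, so $x^B\cdot d=x^A\cdot d=0$. Therefore $d$ is orthogonal to every vector of $\Omega(H_\PP)$.

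It remains to show that the vectors of $\Omega(H_\PP)$ span $\R^{E(H_\PP)}$, which forces $d=0$ and gives the desired contradiction; this is where vertex-biconnectedness of $H_\PP$ must enter, and it is the main obstacle. I would proceed by replication: for $t$ large $H_\PP^t$ is partition-connected (automatically, since $H_\PP$ is connected — and immediately by Remark \ref{rem:shrunk-pc} when $H$ itself is partition-connected), so the hypertrees of $H_\PP^t$ are exactly the bases of the matroid $M(H_\PP^t)$, and collapsing each parallel group $g_e$ maps their indicator vectors into $\Omega(H_\PP)$ via the surjection $\R^{E(H_\PP^t)}\to\R^{E(H_\PP)}$; hence it suffices that the basis indicator vectors of $M(H_\PP^t)$ span $\R^{E(H_\PP^t)}$, which, by the standard fact that the basis indicator vectors of a connected matroid on at least two elements span the whole coordinate space, reduces to showing that $M(H_\PP^t)$ is connected for all large $t$. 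The hard part is precisely this: one must show that a $1$-separation of $M(H_\PP^t)$ would produce a vertex whose removal disconnects $H_\PP$, contradicting $\PP\in\Theta$ — a hypergraphic analogue of Tutte's characterization of matroid connectivity via $2$-connectivity, where replication is genuinely needed (two parallel triples form a vertex-biconnected hypergraph whose hypergraphic matroid is disconnected, yet whose replicated matroid is connected, so one cannot work with $H_\PP$ directly). Granting this, $\Omega(H_\PP)$ spans $\R^{E(H_\PP)}$, so $d=0$, contradicting $d\ne 0$; hence $\eta_0$ is an extreme point of $\Adm(\Omega)$, i.e.\ $\PP\in\widehat{\Omega}$, and $\Theta\subseteq\widehat{\Omega}$.
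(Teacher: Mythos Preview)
Your overall framework coincides with the paper's: write $w=\frac{1}{k-1}\ones_{\delta(\PP)}$, assume $w=\tfrac12(\rho_1+\rho_2)$, observe that $\rho_1,\rho_2$ vanish off $\delta(\PP)$, lift multi-hypertrees of $H_\PP$ to tight multi-hypertrees of $H$ by gluing in a $y_i\in\Omega(H[V_i])$ on each part, and then use these tight objects to force $\rho_1=\rho_2=w$. Your lifting argument and the hyperforest count are correct and match what the paper does.

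The genuine gap is exactly the step you flag with ``granting this'': you reduce extremality to the claim that $\Omega(H_\PP)$ spans $\R^{E(H_\PP)}$, propose to get this from connectivity of $M(H_\PP^t)$, but do not prove that connectivity. The paper does not take this abstract detour. Instead it proves directly (this is the auxiliary Lemma~\ref{lem:twovectors}) that for a vertex-biconnected hypergraph $H_\PP$ and any two hyperedges $e,f\in E(H_\PP)$ there exist $x_1,x_2\in\Omega(H_\PP)$ with $x_2=x_1-\ones_e+\ones_f$. The construction uses a characterization of vertex-biconnected hypergraphs from the literature (Theorem~\ref{thm:commoncycle}): in such a hypergraph any two hyperedges lie on a common cycle, and the paper inflates that cycle inside $H_\PP^t$ (with $t=|V(H_\PP)|$) to a circuit of $M(H_\PP^t)$ containing a copy of $e$ and a copy of $f$, then performs a single basis exchange. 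Plugging these $x_1,x_2$ into your tight lifts $z_1,z_2$ gives $\rho_j\cdot z_1=\rho_j\cdot z_2=1$ and hence $\rho_j(e)=\rho_j(f)$ for all $e,f\in\delta(\PP)$; constancy plus the single equation $\rho_j\cdot x_1=1$ forces $\rho_j\equiv\frac{1}{k-1}$ on $\delta(\PP)$.

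Note that this exchange property is in fact \emph{stronger} than what you asked for: the differences $\ones_e-\ones_f$ already span the zero-sum hyperplane, and together with any single $x_1$ (which has nonzero coordinate sum $k-1$) they span all of $\R^{E(H_\PP)}$; equivalently, it shows that any two elements of $M(H_\PP^t)$ lie in a common circuit, which is precisely the matroid connectivity you wanted. So your strategy is sound, but the missing ingredient is exactly the content of Lemma~\ref{lem:twovectors}, and the route via common cycles is more concrete than arguing abstractly about $1$-separations.
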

		To prove this lemma, we first introduce additional notations and a useful theorem. Let $u,v$ be two vertices in $V$. A {\it walk} $W$ of length $k$ connecting $u$ and $v$ in $H$ is a sequence $v_0 e_1 v_1 e_2 v_2 \dots v_{k-1} e_k v_k$ 
		of vertices and hyperedges (possibly repeated) such that $v_0, v_1, \dots, v_k \in V$, 
		$e_1, \dots, e_k \in E$, $v_0 = u$, $v_k = v$, and for all $i = 1, 2, \dots, k$, the vertices 
		$v_{i-1}$ and $v_i$ are adjacent in $H$ via the edge $e_i$. Then, the hypergraph $H$ is connected if and only if there exists a walk connecting $u$ and $v$ for every pair of distinct vertices $u$ and $v$. For a walk $W = v_0 e_1 v_1 e_2 v_2 \dots v_{k-1} e_k v_k$ in $H$, if $k \geq 2$, $v_0 = v_k$, the vertices $v_0, v_1, \dots, v_{k-1}$ are pairwise distinct
		and the edges $e_1, \dots, e_k$ are also pairwise distinct, then $W$ is called a {\it cycle}. A {\it cut hyperedge} in $H$ is a hyperedge $e \in E$ such that its removal increases the number of connected components of $H$. A \textit{cut vertex} in a hypergraph $H = (V, E)$ with $|V| \geq 2$ is a vertex $v \in V$ such that its deletion increases the number of connected components of $H$.
		
		 We have the following theorem.

		\begin{theorem}\cite[Theorem 3.45]{connection15}\label{thm:commoncycle}
			Let $H = (V, E)$ be a connected hypergraph with $|V| \geq 2$, without hyperedges of cardinality less than 2, and without vertices of degree less than 2. Then the following are equivalent:
			\begin{enumerate}
				\item $H$ has no cut vertices and no cut hyperedges.
				\item Every pair of elements from $V \cup E$ lie on a common cycle.
				\item Every pair of vertices lie on a common cycle.
				\item Every pair of hyperedges lie on a common cycle.
			\end{enumerate}
		\end{theorem}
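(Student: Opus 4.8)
The plan is to reduce the entire statement to the classical characterization of $2$-connected graphs by passing to the \emph{incidence bipartite graph} $B = B(H)$, whose node set is $V \cup E$ and whose edges are the pairs $\{v,e\}$ with $v \in e$. First I would record the dictionary between $H$ and $B$. A hypergraph cycle $v_0 e_1 v_1 \cdots v_{k-1} e_k v_0$ with $k \ge 2$ corresponds exactly to an even cycle $v_0 e_1 v_1 \cdots e_k v_0$ of length $2k \ge 4$ in $B$, and this correspondence is a bijection that preserves which elements of $V \cup E$ appear on the cycle; in particular, two elements of $V \cup E$ lie on a common cycle in $H$ if and only if the associated nodes of $B$ lie on a common cycle in $B$. (A pair of parallel hyperedges sharing two vertices produces a $4$-cycle, matching the $k=2$ case.) Likewise $H$ is connected if and only if $B$ is connected.

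Next I would verify the correspondence of cut structure. Under the hypotheses that every hyperedge has cardinality at least $2$ and every vertex has degree at least $2$, deleting a vertex $v$ from $H$ and deleting the node $v$ from $B$ affect connectivity identically: the only new feature in $B - v$ is that some hyperedge-nodes may become pendants, which does not change the number of components, and the degree hypothesis rules out any vertex-node becoming isolated. Symmetrically, deleting a hyperedge $e$ from $H$ matches deleting the node $e$ from $B$, with the cardinality hypothesis ruling out isolated vertex-nodes. Hence condition (1) is equivalent to $B$ having no cut vertex, i.e.\ to $B$ being $2$-connected (note that $|V \cup E| \ge 4$ under the hypotheses).

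With this dictionary the equivalences fall out. For $(1) \Rightarrow (2)$ I would invoke the classical theorem that in a $2$-connected graph every two nodes lie on a common cycle; applied to $B$ and translated back through the cycle bijection, this is precisely $(2)$. The implications $(2) \Rightarrow (3)$ and $(2) \Rightarrow (4)$ are immediate, since $V$ and $E$ are subsets of $V \cup E$. To close the loop I would prove $(3) \Rightarrow (1)$ and $(4) \Rightarrow (1)$ by contraposition: if $(1)$ fails then $B$ has a cut vertex $c$, and I would observe that, thanks again to the degree and cardinality hypotheses, \emph{every} component of $B - c$ contains at least one vertex-node and at least one hyperedge-node. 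Choosing two vertex-nodes (resp.\ two hyperedge-nodes) in distinct components of $B - c$, the standard ``two arcs of a cycle'' argument shows they cannot lie on a common cycle of $B$: such a cycle would split into two internally disjoint $u$--$v$ paths, at most one of which meets $c$, and the other would then join the two components in $B - c$, a contradiction. Hence $(3)$ (resp.\ $(4)$) fails. Together these give $(1) \Leftrightarrow (2) \Leftrightarrow (3)$ and $(2) \Leftrightarrow (4)$, so all four conditions are equivalent.

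I expect the main obstacle to be the bookkeeping in the incidence-graph dictionary rather than any deep idea: one must check carefully that the cut-vertex and cut-hyperedge correspondences survive the passage to $B$ (handling pendant and potentially isolated nodes), and that the restricted conditions $(3)$ and $(4)$, which only constrain same-side pairs of $B$, still force full $2$-connectivity. This last point is exactly why the observation ``every component of $B - c$ meets both sides'' is essential, and it is precisely here that the hypotheses on hyperedge cardinality and vertex degree are used.
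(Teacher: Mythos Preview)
The paper does not supply its own proof of this theorem; it is quoted verbatim as \cite[Theorem~3.45]{connection15} and used as a black box in the proof of Lemma~\ref{lem:twovectors}. So there is no in-paper argument to compare against.

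That said, your proof is sound and is in fact the standard route to this result. The passage to the incidence bipartite graph $B(H)$ is exactly the right dictionary: hypergraph cycles of length $k\ge 2$ biject with even graph cycles of length $2k$ in $B$, and under the degree-$\ge 2$ and cardinality-$\ge 2$ hypotheses the cut-vertex/cut-hyperedge structure of $H$ matches the cut-vertex structure of $B$. Your check that every component of $B-c$ meets both colour classes is the crucial step that lets the one-sided conditions (3) and (4) recover full $2$-connectivity, and you invoke the hypotheses at precisely the right place for it. The final contrapositive via the two internally disjoint arcs of a putative cycle is the classical argument. Nothing is missing; the only minor cosmetic point is that when you translate ``$H$ has no cut hyperedge'' into a statement about $B$, the hypergraph operation of deleting a hyperedge $e$ removes $e$ from $E$ without touching $V$, whereas deleting the node $e$ from $B$ may leave some vertex-nodes as pendants---you handle the symmetric issue for vertex deletion, and the same reasoning covers this case, but it is worth saying explicitly.
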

		
	Let $\Ga= \Ga(H)$ be the family of all hypertrees where 
	usage vectors are indicator vectors. Let $t = |V|$, and 
	recall the hypergraph $H^t = (V, E^t)$, where $E^t$ is 
	constructed by replacing each hyperedge $e$ in $E$ with 
	a set of $t$ parallel hyperedges $\{e^1,\dots,e^t\}$.  
	Let $\Ga(H^t)$ be the family of all hypertrees of $H^t$ 
	with indicator vectors as usage vectors. The following remark gives a connection between $\Ga(H^t)$ and $\Omega(H)$
	\begin{remark}\label{re:1-1}
		 Each hypertree 
		$\ga \in \Ga(H^t)$ corresponds to a multiset $\ga'$ of $E$ 
		with the indicator vector $x^{\ga'}$, where 
		$x^{\ga'} \in \Omega(H)$. Conversely, every vector in 
		$\Omega(H)$ corresponds to a hypertree in $\Ga(H^t)$. Hence, there is a one to one correspondence between the families $\Omega(H)$ and $\Ga(H^t)$.
	\end{remark}

	Using this theorem, we provide a useful lemma for the proof 
	of Lemma \ref{lem:cl2}.
	
		\begin{lemma}\label{lem:twovectors}
			Let $H$ be a vertex-biconnected hypergraph with $|V| \geq 2$ and $|E| \geq 2$. Let $e$ and $f$ be  two arbitrary distinct and possibly parallel hyperedges. Let $\Omega = \Omega(H)$ be the multi-tree family of $H$. Then there exists two vectors  $x_1,x_2$ in $\Omega$  such that $ x_1 - \ones_{e} + \ones_{f} =x_2$. 
		\end{lemma}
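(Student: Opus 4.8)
The plan is to pass to the $t$-fold parallel blow-up $H^t$ with $t=|V|$. By Remark \ref{re:1-1}, $\Omega(H)$ is in bijection with the set $\Ga(H^t)$ of hypertrees of $H^t$, i.e.\ with the bases of the hypergraphic matroid $M(H^t)$. So it suffices to find bases $F_1,F_2$ of $M(H^t)$ and copies $e^1\in g_e$, $f^1\in g_f$ with $e^1\in F_1$, $f^1\notin F_1$ and $F_2=(F_1\setminus\{e^1\})\cup\{f^1\}$: translating multiplicities through Remark \ref{re:1-1}, and using $e\neq f$, then yields $x_1:=x^{F_1'}$ and $x_2:=x^{F_2'}$ in $\Omega$ with $x_1-\ones_e+\ones_f=x_2$.

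By the matroid exchange property this succeeds as soon as $e^1$ and $f^1$ lie on a common circuit $C$ of $M(H^t)$: extend $C\setminus\{f^1\}$ to a basis $F_1$, observe $f^1\notin F_1$ and that $C$ is then the fundamental circuit of $f^1$ with respect to $F_1$, so since $e^1\in C\cap F_1$ the set $F_1-e^1+f^1$ is a basis. Hence the lemma reduces to showing that $M(H^t)$ is \emph{connected} (every two elements lie on a common circuit).

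To prove connectedness I would use the rank formula of Theorem \ref{rank} together with Theorem \ref{thm:nash}, via two facts. First, for any sub-hypergraph $(V',E')$ of $H$ with $c'$ connected components, $r_{M(H^t)}\bigl((E')^t\bigr)=|V'|-c'$: the upper bound follows by plugging the partition ``component vertex-sets, then singletons'' into Theorem \ref{rank}, and the lower bound because each component is connected with at most $|V|=t$ vertices, hence has a partition-connected $t$-blow-up by Theorem \ref{thm:nash} and so contains a hypertree; in particular $r(E^t)=|V|-1$. Second, for each hyperedge $e$ the $|e|\le t$ hyperedges $e^1,\dots,e^{|e|}$ form a circuit of $M(H^t)$, so all $t$ copies of $e$ lie in one connected component. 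Now suppose $M(H^t)$ had a $1$-separation $E^t=E_1\sqcup E_2$ (nontrivial, $r(E_1)+r(E_2)=|V|-1$). By the second fact each parallel group sits entirely in one side, so $E_1=(E')^t$, $E_2=(E'')^t$ for a partition $E=E'\sqcup E''$; by the first fact and $V[E']\cup V[E'']=V$ the rank equation becomes $|V[E']\cap V[E'']|=c'+c''-1$, where $c',c''$ count the components of $(V[E'],E')$ and $(V[E''],E'')$. The bipartite ``intersection graph'' $\mathcal G$ on these $c'+c''$ components, with one edge per vertex of $V[E']\cap V[E'']$, then has $c'+c''-1$ edges and is connected because $H$ is, so $\mathcal G$ is a tree. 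A leaf of $\mathcal G$ is a component meeting the other side in a single vertex $v_0$ and (as $H$ is loopless) containing at least two vertices; deleting $v_0$ isolates the rest of that component, so $v_0$ is a cut vertex of $H$ — contradicting vertex-biconnectedness. Thus $M(H^t)$ is connected, and the reduction above finishes the proof.

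The crux is this last argument. One might hope to invoke Theorem \ref{thm:commoncycle} directly — it does give a cycle of $H^t$ through $e^1$ and $f^1$ — but the edge set of a hypergraph cycle need not be dependent in the hypergraphic matroid (for instance two large hyperedges sharing exactly two vertices form a $2$-cycle yet are an independent pair), so a common cycle does not by itself produce a common circuit. Controlling the matroid rank through Theorem \ref{rank} is the real work, and vertex-biconnectedness is precisely the hypothesis that rules out the separating partition the rank equation would otherwise allow; everything else (the exchange step, the two rank facts, and the multiplicity bookkeeping via Remark \ref{re:1-1}) is routine.
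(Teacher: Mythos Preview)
Your argument is correct and genuinely different from the paper's. You reduce the lemma to the connectedness of the matroid $M(H^t)$ and establish that by a rank computation (via Theorems \ref{rank} and \ref{thm:nash}) together with the bipartite ``intersection tree'' argument, which neatly forces a cut vertex from any putative $1$-separation. The paper instead constructs an explicit common circuit: it first invokes Theorem \ref{thm:commoncycle} to obtain a hypergraph cycle of $H^t$ through $e^1$ and $f^1$, and then---precisely to address the obstacle you flag, that a hypergraph cycle need not be matroid-dependent---\emph{inflates} the cycle, inserting each vertex of $\bigcup_i a_i$ not yet visited by splitting one of the cycle's hyperedges into two parallel copies. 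This is where the $t=|V|$ parallel copies are spent: each insertion costs one fresh copy from some group, and at most $|V|$ are ever needed. The resulting cycle has exactly as many hyperedges as vertices in its span, hence is dependent, and deleting any hyperedge leaves a path-representable (independent) set, so it is a circuit containing $e^1$ and $f^1$.

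What each approach buys: your route is self-contained within the paper (it uses only Theorems \ref{rank} and \ref{thm:nash}, not the external Theorem \ref{thm:commoncycle}) and actually proves the stronger statement that $M(H^t)$ is a connected matroid, which may be of independent interest. The paper's route is more constructive---one sees the circuit---and makes transparent why $t=|V|$ copies suffice. Your remark that Theorem \ref{thm:commoncycle} cannot be applied \emph{naively} is well taken; the inflation step is exactly the missing piece that turns a hypergraph cycle into a matroid circuit. One small point: both your argument and the paper's implicitly use that $H$ is connected (otherwise $\Omega(H)=\emptyset$ and, in your proof, $V[E']\cup V[E'']=V$ and the connectedness of $\mathcal G$ would fail); this is harmless since the lemma is only applied to shrunk hypergraphs $H_{\PP}$, which are connected.
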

		\begin{proof}
			We consider the hypergraph $H^t$ with $t = |V|$. Since $H$ is vertex-biconnected, it follows that $H^t$ is also vertex-biconnected. By definition of  $H^t$, $H^t$ has no cut hyperedges and has no hyperedges with cardinality less than $2$, and no vertices with degree less than 2. 
			
			Let $e$ and $f$ be  two arbitrary distinct and possibly parallel hyperedges in $H$. The hyperedge $e$ corresponds the the group $g_e$ of $t$ parallel hyperedges $\{e^1,\dots,e^t\}$ in $H^t$ and $f$ corresponds to the group $g_f$ of $t$ parallel hyperedges $\{f^1,\dots,f^t\}$ in $H^t$. Applying Theorem \ref{thm:commoncycle}, we have that $e^1$ and $f^1$ lie on a common cycle $ \pi = v_0a_1v_1a_2v_2\dots a_kv_k$ where $v_k=v_0$ and $k \geq 2$. Let $A= \bigcup_{i=1,\dots,k} a_i$. Enumerate the set $A \setminus \lbr v_0,\dots, v_{k-1}\rbr$ as $\lbr x_1,\dots ,x_m \rbr$ and we extend the cycle $\pi$ as follows. To extend the cycle $\pi$, note that $x_1 \in A$, we pick an hyperedge $a_i$ such that $x_1 \in a_i$, and extend the cycle \[ \pi = v_0a_1v_1\dots v_{i-1}a_iv_i\dots a_kv_k\]
			to \[ v_0a_1v_1\dots v_{i-1}a_ix_1a'_iv_i\dots a_kv_k,\] where $a'_i$ is some hyperedge in the group of $t$ parallel hyperedges corresponding to $a_i$ but not included in the previous cycle. Note that the number of hyperedges in the cycle is at most $|V|=t$, and the previous cycle includes $e^1$ and $f^1$, which belong to different groups $g_{e}$ and $g_{f}$. Thus, there always exists such an $a'_i$, as each group has $t$ elements. We proceed to extend the cycle with $x_2,\dots,x_m$. In the end, this yields a cycle of size $|A|$. Thus, the set $C$ of $|A|$ hyperedges in the cycle forms a circuit in the graphic matroid $M(H^t)$, as we now explain. Indeed, note that the size of $C$ is equal to $|\bigcup_{c\in C}c|$, implying that $C$ is dependent. Furthermore, if any hyperedge $c$ is removed from $C$, the set $C \setminus \{c\}$ is tree-representable, with its representing tree being a path connecting all vertices in $|\bigcup_{c\in C}c|$ derived from the cycle. Therefore $C \setminus \{c\}$ is independent.
			Hence,  $C$ is a minimal depedent set, forming a circuit of the matroid $M(H^t)$.

			Given that $C$ is a circuit of the matroid $M(H^t)$ containing $e^1$ and $f^1$, it follows that $C -\lbr f^1 \rbr$ is a independent set. There exists a base $B_1$ of $M(H^t)$ such that $(C -\lbr f^1 \rbr) \in B_1$. Now, define the set $B_2 := (B_1 - \lbr e^1 \rbr) \cup \lbr f^1 \rbr$  and it is also a base of $M(H^t)$. The corresponding indicator vectors $x^{B'_1}$ and $x^{B'_2}$ in $\Omega$ of two multisets  $B'_1$ and $B'_2$ corresponding to $B_1$ and $B_2$ satisfy the following relation: 
			
			\[x^{B'_1}- \ones_{e} + \ones_{f} =x^{B'_2}.\] 
			This completes the proof.
		\end{proof}
		
		\begin{proof}[Proof for Lemma  \ref{lem:cl2}]
			 Let $\PP \in \Theta$, we want to show that 
			\[w :=\frac{1}{|\PP|-1}\mathbbm{1}_{\delta(\PP)} \in \Ext(\Adm(\Omega))=\widehat{\Omega}.\]
			Assume that there are two densities $\rho_1,\rho_2 \in \Adm(\Omega)$ such that 
			\[ w = \frac{1}{2}(\rho_1+\rho_2).\]
			For every hyperedge $e \in E\backslash \delta(\PP)$, we have \[\frac{1}{2}(\rho_1(e)+\rho_2(e))=w(e)= 0 \Rightarrow \rho_1(e)=\rho_2(e)=0.\] 
			
			Since $\PP \in \Theta$, we have that $H_{\PP}$ is vertex-biconnected. First, assume that $|\delta(\PP)| \geq 2$. Let $e$ and $f$ be  two arbitrary distinct hyperedges in $\delta(\PP)$. By Lemma \ref{lem:twovectors},  there exists two vectors  $x_1,x_2$ in $\Omega(H_{\PP})$  such that $ x_1 - \ones_{e} + \ones_{f} =x_2$. 
			
			For each $i = 1,\dots,|\PP|$, the hypergraph $H[V_i]$ is connected, there exists a vector $y_i$ in $\Omega(H[V_i])$. We concatenate all vectors $y_i$ with vector $x_1$ to form $z_1 \in \R^E_{\geq 0}$ and we have that $z_1\in \Omega(H)$. Similarly, concatenating all vectors $y_i$ with $x_2$ yields $z_2 \in \R^E_{\geq 0} $ and $ z_2 \in \Omega(H)$.
			
			Because  $\rho_1,\rho_2 \in \Adm(\Omega(H))$, we have
			\[\rho_1 \cdot z_1,\rho_1\cdot z_2 ,\rho_2\cdot z_1 ,\rho_2 \cdot z_2 \geq 1.\] 
			Since $\rho_1+\rho_2 = 2w$, we have  $(\rho_1+\rho_2)\cdot z_1 = 2w\cdot z_1 =  2w\cdot x_1 = 2$ and  $(\rho_1 +\rho_2)\cdot z_2 = 2w\cdot z_2 = 2w\cdot x_2 =  2$. This implies 
			\[\rho_1\cdot z_1 =\rho_2\cdot z_1 =\rho_1\cdot z_2 =\rho_2\cdot z_2 =1.\]
			Because  $\rho_1(e)=\rho_2(e)=0 $ for all $e \in E\backslash \delta(\PP)$, we obtain  
			\[\rho_1\cdot x_1= \rho_2\cdot x_1=\rho_1\cdot x_2= \rho_2\cdot x_2=1.\]
			Thus, 
			\begin{align*}
				0 = \rho_1\cdot (x_1-x_2) = \rho_1\cdot (\ones_e - \ones_f)
			\end{align*}
			Hence, since $e$ and $f$ were chosen arbitrarily, $\rho_1$ must be constant in $\delta(\PP)$. Similarly, $\rho_2$ is also constant in $\delta(\PP)$.
			Because $\rho_1\cdot x_1 =\rho_2\cdot x_1 =1$ and $ x_1(E) = |\PP|-1$ , we obtain that 
			
			\[\rho_1(e)= \rho_2(e) = \frac{1}{|\PP|-1}=w(e), \quad \forall e \in \delta(\PP).\]
			Therefore, $\rho_1=\rho_2=w $ and $w$ is an extreme point of $\Adm(\Omega(H))$.
			
			If $|\delta(\PP)|= 1$. By the same argument above, $w$ is an extreme point of $\Adm(\Omega(H))$.
			In conclusion, we have shown that $w \in \widehat{\Omega}$.
		\end{proof}
		
		If $H$ is partition-connected, equivalently, the family $\Ga(H)$ of all hypertrees is not empty. Then, the Fulkerson blocker family $\widehat{\Ga}(H)$ can be found by considering it as the base family of the associated hypergraphic matroids. Here is an example. The hypergraph in Figure \ref{fig:example1} has only one hypertree $\{e_1, e_2\}$ with the usage vector $(1, 1)$, and $\widehat{\Ga}(H)$ includes $(1, 0)$ and $(0, 1)$, which are not related to feasible partitions.

		

		\section{Modulus of the hypertree family}\label{sec:hypertree}
		
		In this section, we study the modulus of the hypertree 
		family of a partition-connected hypergraph. Specifically, 
		we analyze both 1-modulus and 2-modulus of this family, 
		and the main tool we utilize is the theory of the base 
		modulus in matroids.
		
		Let $H=(V,E)$ be a partition-connected hypergraph. This implies that $H$ has at least one hypertree. Let $\Ga = \Ga(H)$ be the hypertree family of $H$, where usage vectors are indicator functions. Let $\Omega = \Omega(H)$ be the multi-tree family of $H$. Note that $\Ga(H) \subset \Omega(H)$.  
		Let $\si \in \R^E_{>0}$ be a set of weights  assigned to the hyperedges in $E$. Our first objective is to analyze the 1-modulus of both the hypertree family and the multi-tree family. 
		\begin{remark}\label{re:mod1-s}
			By Proposition \ref{pro:dualpair}, two families $\Phi$ and $\Omega$ form a Fulkerson dual pair. Since the $1$-modulus is a linear program, at least one optimal density of $\Mod_{1,\si}(\Omega(H))$ must occur at an extreme point of $\Adm(\Omega(H))$. Hence, we have  \[\Mod_{1,\si}(\Omega(H)) = S_{\si}(H),\] for any set of weights $\sigma \in \R^E_{> 0}$.
		\end{remark}
	Let $M = M(H)$ be the hypergraphic matroid associated with $H$. An interesting question is how to interpret known results in the theory of modulus for bases of matroids on hypergraph. Let $r$ be the rank function of $M$. Then, the hypertree family $\Ga = \Ga(H)$ is the family of bases of $M$. Recall the weighted strength $s_{\si}(M)$ of the matroid $M$ with weights $\si \in \R^E_{> 0}$ defined as:
		\[s_{\si}(M) = \min \left\{ \frac{\si(X)}{r(E) - r(E\setminus X)} : X \subseteq E, r(E) > r(E \setminus X) \right\}.\]
		Here, $s_{\si}(M)$ is known as the strength of the matroid $M$, and it coincides with graph strength in the graphs setting. In \cite{basemodulus}, the authors show that \[s_{\si}(M) =  \Mod_{1,\si}(\Ga(H)).\]  Using Definition of $S_{\si}(H)$ in \ref{eq:weighted-s}, we may have
		\[S_{\si}(H) \neq  \Mod_{1,\si}(\Ga(H)).\] However, in the unweighted case, we can prove Theorem \ref{mod-strength}, which states that
			\begin{equation}
			\Mod_1(\Ga(H)) = S(H),
		\end{equation}
		 we will prove it after presenting results in 2-modulus.

		We now turn our attention to the 2-modulus of the hypertree family. 
		The following theorem is one of our main results.
		\begin{theorem}\label{main1}
			Let $H$ be a partition-connected hypergraph, and let 
			$\Ga = \Ga(H)$ be the hypertree family of $H$. Let 
			$\eta^*$ be the optimal density of  
			$\Mod_2(\widehat{\Ga})$, and let $E_{max}$ be the set 
			of elements where $\eta^*(e)$ attains the maximum 
			value of $\eta^*$. Then, $E_{max} = \delta_{H}(\PP)$ 
			for some feasible partition $\PP = \{V_1,\dots,V_k\}$ 
			of $V$ such that each subhypergraph $H[V_i]$ is 
			partition-connected, for $i=1,\dots,k$.
		\end{theorem}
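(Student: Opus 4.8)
The plan is to move to the Fulkerson dual, recognize $\eta^*$ as a minimum-norm point, read off combinatorial structure from the optimality conditions via matroid exchanges, and then translate that structure back to a partition using Theorems~\ref{rank} and~\ref{thm:nash}. Throughout, $M = M(H)$ and $\Ga = \Ga(H)$ is its base family.

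\emph{Step 1 (minimum-norm reformulation).} Since $\Adm(\widehat{\Ga}) = \Dom(\Ga) = \co(\Ga) + \R^E_{\geq 0}$, the optimization on the right of \eqref{eq:modmeo} shows that $\eta^*$ is the unique point of smallest Euclidean norm in the base polytope $\co(\Ga)$. Write $\eta^* = \sum_{T \in \Ga_0} \lambda_T \ones_T$ with $\lambda$ a probability vector and $\Ga_0$ its support. Convexity of $\co(\Ga)$ and minimality of $\|\eta^*\|_2^2$ give $\langle \eta^*, \ones_T\rangle = \|\eta^*\|_2^2$ for all $T \in \Ga_0$ and $\langle \eta^*, \ones_T\rangle \geq \|\eta^*\|_2^2$ for all $T \in \Ga$. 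In particular, if $T \in \Ga_0$, $e \in T$, $f \notin T$, and $(T \setminus \{e\}) \cup \{f\} \in \Ga$, then $\eta^*(f) \geq \eta^*(e)$; dually, if $T \in \Ga_0$, $e \notin T$, $g \in T$, and $(T \cup \{e\}) \setminus \{g\} \in \Ga$, then $\eta^*(g) \leq \eta^*(e)$.

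\emph{Step 2 (cocircuit structure of $E_{max}$).} Let $m := \max_e \eta^*(e) > 0$; then every $e \in E_{max}$ lies in some $T_e \in \Ga_0$. Each element $f$ of the fundamental cocircuit $C^*_M(e;T_e) = \{f : (T_e \setminus \{e\}) \cup \{f\} \in \Ga\}$ satisfies $\eta^*(f) \geq m$ by Step~1, hence $\eta^*(f) = m$, so $C^*_M(e;T_e) \subseteq E_{max}$. Therefore $E_{max}$ is a union of cocircuits of $M$, i.e.\ $G := E \setminus E_{max}$ is a flat of $M$. Using the dual exchange one checks further that every $T \in \Ga_0$ meets $G$ in a basis of $M|G$ (otherwise pick $f \in G \setminus \cl_M(T \cap G)$; a circuit argument produces $g \in T \cap E_{max}$ with $(T \cup \{f\}) \setminus \{g\} \in \Ga$, forcing $\eta^*(f) \geq m$, a contradiction since $f \in G$). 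Consequently $\eta^*(G) = r(G)$, every $T \in \Ga_0$ meets $E_{max}$ in a basis of $M/G$, and $\eta^*|_{E_{max}}$ is a \emph{constant} vector lying in the convex hull of the indicator vectors of the bases of $M/G$.

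\emph{Step 3 (passage to a partition).} Because $G$ is a flat, apply Theorem~\ref{rank}: choose a partition $\PP = \{V_1, \dots, V_k\}$ attaining $r(G) = |V| - |\PP| + |\delta_G(\PP)|$ with $|\PP|$ as large as possible. Tightness of the subadditive bound $r(G) \leq \sum_i r(G \cap E[V_i]) + |\delta_G(\PP)| \leq \sum_i(|V_i|-1) + |\delta_G(\PP)|$ forces $r(G \cap E[V_i]) = |V_i| - 1$ for each $i$, so by Theorem~\ref{thm:nash} each $H[V_i]$ is partition-connected and $\PP$ is feasible. If some $e \in E_{max}$ were contained in a single $V_i$, then evaluating the rank formula for $G \cup \{e\}$ on $\PP$ gives $r(G \cup \{e\}) \leq |V| - |\PP| + |\delta_G(\PP)| = r(G)$, contradicting that $G$ is a flat with $e \notin G$; hence $E_{max} \subseteq \delta_H(\PP)$. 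It remains to establish $\delta_G(\PP) = \emptyset$, which together with the above yields $E_{max} = \delta_H(\PP)$ and completes the proof.

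\emph{Main obstacle.} The step $\delta_G(\PP) = \emptyset$ — equivalently, that the flat $G = E \setminus E_{max}$ has the special form $\bigcup_i E[V_i]$ with partition-connected pieces — is the heart of the matter and cannot follow from flatness of $G$ alone (a general flat of $M(H)$ need not be of this form, e.g.\ $\{e\}$ with $e = V$). Here one must exploit the finer structure of the minimum-norm point beyond Step~2: the fact that $E_{max}$ is the top level set of $\eta^*$, so that $\eta^*|_{E_{max}}$ is constant and $M/G$ behaves like a homogeneous matroid (its base polytope has a constant minimum-norm point), which is precisely the input provided by the base-modulus theory of \cite{basemodulus} identifying $E_{max}$ with the top block of the principal (density) partition of $M$. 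Once that is in hand, the maximality of $|\PP|$ together with the refinement argument (replacing any merely connected part by a finer one via Theorem~\ref{thm:nash}) upgrades the partition to one whose parts are partition-connected, giving the stated conclusion.
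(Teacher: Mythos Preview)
Your proposal has a genuine gap at the decisive step, and you correctly flag it yourself under ``Main obstacle''. Steps~1 and~2 are sound and, in effect, reprove by direct minimum-norm/exchange arguments the facts the paper imports from \cite{basemodulus}: that $G := E \setminus E_{max}$ is a flat of $M$ and that every $T$ in the support meets $G$ in a basis of $M|G$. In Step~3, choosing a rank-optimal partition $\PP$ for $G$ with $|\PP|$ maximal does give partition-connected parts and $E_{max} \subseteq \delta_H(\PP)$, but to conclude $E_{max} = \delta_H(\PP)$ you need $\delta_G(\PP) = \emptyset$, and this you do not prove. The appeal to ``homogeneity of $M/G$'' and ``maximality of $|\PP|$'' is not an argument: if some connected component of $H[G]$ failed to be partition-connected, your rank-optimal $\PP$ would strictly refine the component partition and would necessarily contain $G$-edges crossing $\PP$, so maximizing $|\PP|$ cannot by itself force $\delta_G(\PP)=\emptyset$.

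The paper closes exactly this gap, but by a different route. It takes $\PP$ to be the partition into vertex sets of the connected components $L_i = (V_i, C_i)$ of $H[G]$, so $\delta_G(\PP)=\emptyset$ holds by construction; the work is then to show each $L_i$ is partition-connected. Assuming $L_1$ is not, Theorem~\ref{rank} produces a partition $J$ of $V_1$ with $|\delta_{L_1}(J)| < |J| - 1$. Setting $X := E_{max} \cup \delta_{L_1}(J) \supsetneq E_{max}$ and using that $E_{max}$ is the \emph{maximal} optimal set for the matroid strength $s(M)$, one gets the strict inequality
\[
\frac{|E_{max}|}{r(E)-r(G)} \; < \; \frac{|X|}{r(E)-r(E\setminus X)}\,,
\]
and after showing $r(G)-r(E\setminus X) \ge |\delta_{L_1}(J)|$ this reduces to $a/b < (a+f)/(b+f)$ with $a = |E_{max}|$, $b = r(E)-r(G)$, $f = |\delta_{L_1}(J)|$, forcing $a<b$. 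This contradicts $s(M) \ge 1$. The two inputs you never invoke are precisely the maximality of $E_{max}$ among strength-optimal sets and the bound $s(M)\ge 1$; together they are what rules out $G$-edges crossing the partition. Your Steps~1--2 do give you maximality (implicitly), but the numerical contradiction requires the second input as well, and your outline does not supply it.
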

		
		\begin{proof}
			Denote $ \eta^*_{max}$ as the maximum value of $\eta^*$. Let $M = M(H)$ be the hypergraphic matroid associated with $H$. By Theorem \cite[Theorem 4.1]{basemodulus}, we have that the set $E- E_{max}$ is closed in $M$, and
			\begin{align}
				\frac{1}{\eta^*_{max}}  = s(M),
			\end{align}
			where $s(M)$ is the strength of the unweighted matroid $M$.
			Moreover, Theorem \cite[Theorem 4.1]{basemodulus} shows that $E_{max}$ is optimal for the strength problem $s(M)$. Furthermore, $E_{max}$ is the maximal set that is optimal for $s(M)$ and this can be deduced from the proof of \cite[Theorem 4.1]{basemodulus} as follows. For any set $\emptyset \neq Y \not\subset E_{max}$, the set $Y$ is not optimal for $s(M)$ because
			\begin{align*}
				\eta^*_{max}&> \frac{\eta^*(Y)}{|Y|} & (\text{by definition of $\eta^*_{max}$})\\
				&\geq \frac{r(E) - r(E - Y)}{|Y|} & (\text{by \cite[Lemma 3.12 (i)]{basemodulus}}).
			\end{align*}
			
			Now, we remove $E_{max}$ from the hypergraph $H$ to obtain the subhypergraph $H_1 := H[E- E_{max}]$. Assume that $H_1$ has  $k$ connected components $L_1,\dots,L_k$ with $L_i=(V_i,C_i)$, $k \geq 1$. Let  $\PP$ be the partition of $V$ where each class of $\PP$ is the vertex set $V_i$ of $L_i$, for $i=1,\dots,k.$
			Then $\bigcup_{i =1,\dots,k} C_i = E - E_{max}$.  Note that the set $E - E_{max}$ is closed in $M$, and $E_{max} $ is the maximal set that is optimal for the strength problem $s(M)$, and
			
			\begin{align}
				1 \leq \frac{1}{\eta^*_{max}}= s(M) = \frac{|E_{max}|}{(|V|-1)- r(E- E_{max})}.
			\end{align} 
			Assume that $L_1$ has at least one hyperedge and is not partition-connected. Hence, $r(C_1) < |V_1| -1$. Using Theorem \ref{rank} for the rank function of the matroid $M(H)$ restricted to $C_1$, there exists a feasible partition $J = \{ J_1,J_2,\dots,J_l\}$ of $V_1$ such that
			\[ r(C_1) = |V_1|-|J| + |\delta_{L_1}(J)|.\] Thus,
			\begin{equation}\label{eq:rankL1}
				|\delta_{L_1}(J)|=|J|-|V_1|+r(C_1).
			\end{equation}
			Since $r(C_1) < |V_1| -1$, it follows that $|J|\geq 2$,  $\delta_{L_1}(J)$ is not empty, and $|\delta_{L_1}(J)| <|J|-1$.
			For the matroid strength problem $s(M)$,  we choose a candidate $X := E_{max}\cup \delta_{L_1}(J)$. For convenience, we also denote \[a = |E_{max}|,\quad b = |V| -1 - r(E \setminus E_{max}),\quad  \quad f =|\delta_{L_1}(J)| =  |J|-|V_1|+r(C_1).\]
			Because $\delta_{L_1}(J)$ is not empty, we obtain $E_{max} \subsetneq X$. Since the set $E_{max}$ is the maximal set that is optimal for the strength problem $s(M)$, we have
			\begin{align}\label{eq:strict-ine}
				1 \leq s(M)= \frac{a}{b} < \frac{|X|}{|V|-1 - r(E \setminus X)} = \frac{a+f}{b + r(E \setminus E_{max}) - r(E \setminus X)}.
			\end{align}
			Now, we aim to prove 
			\begin{equation}\label{eq:diff-f}
				r(E \setminus E_{max}) - r(E \setminus X) \geq |J|-|V_1|+r(C_1) = f.
			\end{equation}
			Indeed, we have
			\begin{align*}
				&r(E \setminus E_{max}) - r(E \setminus X)  - f= r(C_1) - r(C_1 \setminus \delta_{L_1}(J)) - f\\
				&=r(C_1) - r(C_1 \setminus \delta_{L_1}(J)) - (|J|-|V_1|+r(C_1)) \\
				&= |V_1| - r(C_1 \setminus \delta_{L_1}(J)) - |J|\\
				& =  (|J_1| - r(E(L_1[J_1]))+\dots+(|J_l| -r(E(L_1[J_l])) - |J|\\
				& \geq 1 +\dots+ 1 -|J| = |J| -|J| =  0.
			\end{align*}
			Therefore, (\ref{eq:strict-ine}) implies that
			\[\frac{a}{b} < \frac{a+f}{b+f}.  \] 
			This is equivalent to $a < b$, which contradicts  $a/b \geq 1$.  Therefore, $\delta_{L_1}(J)$ must be empty, $|J|= 1$, and $L_1$ is partition-connected.
			Similarly, every other connected component $L_i$ at least one hyperedge of the subhypergraph $H[E- E_{max}]$ is partition-connected.
			
			To conclude, we want to show that $E_{max} = \delta_{H}(\PP)$. If there exists a hyperedge $e \in E_{max}$ such that it lies in $H[V_i]$, then adding $e$ to $E(L_i)$ does not change the rank of $E(L_i)$ since $L_i$ is partition-connected. This contradicts the fact that $E - E_{max}$ is closed in $M$. Therefore, we must have $E_{max} \subset \delta_{H}(\PP)$. Let $x \in \delta_{H}(\PP)$. Then $x \in E_{max}$; because otherwise, $x$ would connect two connected components of $H_1$, which is a contradiction. Thus, we conclude that $E_{max} = \delta_{H}(\PP)$.
		\end{proof}

		Next, we will use Theorem \ref{main1} to prove Theorem \ref{mod-strength}. Before doing so, we make some remarks.
		
		\begin{remark}\label{re:s-neq-s}
			If $H$ is not partition-connected, then $H$ has no hypertree and every base of the hypergraphic matroid $M(H)$ has rank less than $|V|-1$. In general, we have $s(M(H)) \neq S(H)$. Here is an example, consider the hypergraph $H =(V,E )$ with $V =\lbr v_1,v_2,v_3 \rbr, E = \lbr e\rbr$ and $e = \lbr v_1,v_2,v_3 \rbr$. Then,  $ S(H) =  1/2$ while $s(M(H)) = 1$.
		\end{remark}
		\begin{remark}\label{mod-strength-mat}
			Let $H$ be a partition-connected hypergraph. The hypertree family $\Ga = \Ga(H)$ is the family of bases of the matroid $M(H)$.  Let $\eta^*$ be the optimal density of  $\Mod_2(\widehat{\Ga})$ and denote $ \eta^*_{max}$ as the maximum value of $\eta^*$. By the theory of the base modulus for matroids in \cite{basemodulus}, we have \[\Mod_1(\Ga(H)) = s(M(H)) = \frac{1}{\eta_{max}^*}.\] 
		\end{remark}
		Now, we present the proof of Theorem \ref{mod-strength}.
		\begin{proof}[Proof of Theorem \ref{mod-strength}]
			Let $M = M(H)$ be the hypergraphic matroid associated with $H$. By Remark \ref{mod-strength-mat}, we have $\Mod_1(\Ga(H)) = s(M)$. By Theorem \ref{main1}, $E_{max} = \delta_{H}(\PP)$  for some feasible partition $\PP = \{V_1,\dots,V_k\}$ of $V$ such that each subhypergraph $H[V_i]$ is partition-connected, for $i=1,\dots,k$. Note that $E_{max} $  is the maximal set that is optimal for the strength problem $s(M)$, then
			\begin{align*}
				s(M(H)) & = \frac{|E_{max}|}{r(E)-r(E- E_{max})}=\frac{|\delta_H(\PP)|}{|\PP|-1}\geq S(H).
			\end{align*}
			For any $X \subset E$ such that $ r(E)-r(X) >0$, assume that hypergraph $H[X]$ has $k$ connected components $H_i = (V_i,X_i)$, for $i=1,\dots,k$. Then \[|V(H/X)| = |V(H)| - \sum\limits_{i =1}^k(|V_i|-1) \leq |V(H)| - \sum\limits_{i =1}^k r(X_i) = |V(H)|- r(X) = r(E)+1-r(X).\] 
			Hence,
			\begin{align*}
				s(M) \leq \frac{|E-X|}{r(E)-r(X)} \leq \frac{|E-X|}{V(H/X)-1}.
			\end{align*}
			Thus, $s(M) \leq S(H)$. In conclusion, we have $s(M) = S(H)$. Therefore, $\Mod_1(\Ga(H)) = s(M)=S(H).$
		\end{proof}

		The next theorem describes the serial rule for the modulus of the hypertree family $\Ga(H)$, this theorem is motivated by the serial rule for the modulus for bases of matroids, see \cite[Theorem 3.19]{basemodulus}. 
		\begin{theorem}\label{thm:emax-serial}
			Let $H$ be a partition-connected hypergraph. Let $\Ga =\Ga(H)$ be the family of hypertrees of $H$. Assume that the optimal solution $\eta^*$ of $\Mod_2(\widehat{\Ga})$ is not a constant vector and let $E_{max}$ be the set of hyperedges $e$ where $\eta^*(e)$ attains the maximum value of $\eta^*$.  Let $H/(E-E_{max})$ be the hypergraph obtained from $H$ by contracting $E-E_{max}$. Let the hypergraph $H[E- E_{max}]$ have $k$ connected components $L_1,\dots,L_k$ with $L_i=(V_i,C_i)$, for $i=1,\dots,k$. Then, 
			\bi 
			\item[ (i)] The hypergraph $H/(E-E_{max})$ and every hypergraph $L_i$ are partition-connected, for $i=1,\dots,k$.
			\item[ (ii)] The modulus problem $\Mod_2 (\widehat{\Ga}(H))$ splits as follows:\begin{equation}
				\Mod_2 (\widehat{\Ga}(H)) = \Mod_2 (\widehat{\Ga}(L_1)) + \Mod_2 (\widehat{\Ga}(L_2)) +\dots + \Mod_2 (\widehat{\Ga}(L_k))+ \Mod_2 (\widehat{\Ga}(H/(E-E_{max})));
			\end{equation}
			\item[ (iii)] The restriction of $\eta^*$ onto $C_i$ is optimal for $\Mod_2(\widehat{\Ga}(L_i)) $ for $i = 1,2,\dots,k$ and the  restriction of $\eta^*$ onto $E_{max}$  is optimal for $\Mod_2(\widehat{\Ga}(H/(E-E_{max})))$. 
			\ei
		\end{theorem}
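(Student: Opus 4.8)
The plan is to lift the whole statement to the hypergraphic matroid $M = M(H)$, whose base family is exactly $\Ga(H)$, and then invoke the serial rule for base modulus \cite[Theorem 3.19]{basemodulus}, feeding in the structural description of $E_{max}$ already obtained in Theorem \ref{main1}.

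For part (i), I would simply read off the needed facts from Theorem \ref{main1}: it produces a feasible partition $\PP = \{V_1,\dots,V_k\}$ with $E_{max} = \delta_H(\PP)$ and each $H[V_i]$ partition-connected, and in its proof the classes $V_i$ are the vertex sets of the components $L_i$ of $H[E-E_{max}]$ (a hyperedge lying inside some $V_i$ cannot cross $\PP$, so $C_i = E[V_i]$ and $L_i = H[V_i]$); hence each $L_i$ is partition-connected. For the contraction, note $E - E_{max} = E \setminus \delta_H(\PP) = \bigsqcup_i E[V_i]$, and since $H[V_i]$ is connected, contracting $E[V_i]$ collapses $V_i$ to a single vertex, so $H/(E-E_{max})$ is precisely the shrunk hypergraph $H_{\PP}$; Remark \ref{rem:shrunk-pc} then gives that it is partition-connected.

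For parts (ii) and (iii) I would argue at the matroid level. Since $\eta^*$ is non-constant, $E_{max}$ is a proper nonempty subset of $E$, and by Theorem \ref{main1} (via \cite[Theorem 4.1]{basemodulus}) the set $E - E_{max}$ is closed in $M$. The serial rule \cite[Theorem 3.19]{basemodulus} then yields
\[
\Mod_2\big(\widehat{\Ga}(M)\big) = \Mod_2\big(\widehat{\Ga}(M|_{E-E_{max}})\big) + \Mod_2\big(\widehat{\Ga}(M/(E-E_{max}))\big),
\]
with the restriction of $\eta^*$ to $E - E_{max}$ optimal for the first term and its restriction to $E_{max}$ optimal for the second. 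It then remains to identify the two matroid factors with hypergraph moduli. First, $M|_{E-E_{max}} = \bigoplus_{i=1}^k M(L_i)$: hyperedges of distinct components are vertex-disjoint, so a subset of $E-E_{max}$ is a hyperforest iff its trace on each $C_i$ is, and $M|_{E[V_i]} = M(H[V_i])$ follows from the rank formula in Theorem \ref{rank} (for $F \subseteq E[V_i]$, refining the classes of a partition of $V$ that lie outside $V_i$ into singletons leaves $\delta_F(\cdot)$ unchanged, which collapses the minimum over partitions of $V$ to the one over partitions of $V_i$). Then the direct-sum rule for $2$-modulus of blocker families---$\Mod_2(\widehat{\Ga}(\bigoplus_i M_i)) = \sum_i \Mod_2(\widehat{\Ga}(M_i))$ with the optimal density the concatenation of the optimal densities, which follows because, by the discussion after \eqref{eq:modmeo}, $\Mod_2(\widehat{\Ga})$ is the squared Euclidean distance from the origin to $\co(\Ga)$ and $\co$ of a direct sum is a Cartesian product---gives $\Mod_2(\widehat{\Ga}(M|_{E-E_{max}})) = \sum_i \Mod_2(\widehat{\Ga}(L_i))$ together with optimality of $\eta^*|_{C_i}$ for $\Mod_2(\widehat{\Ga}(L_i))$. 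Second, $M/(E-E_{max}) = M(H/(E-E_{max}))$, so that $\Ga$ of this matroid equals $\Ga(H/(E-E_{max}))$; the hypothesis that each $H[V_i]$ is partition-connected is exactly what is needed here, since it forces $r_M(E[V_i]) = |V_i|-1$, so matroid contraction of $E[V_i]$ agrees with the hypergraph contraction collapsing $V_i$. Substituting these two identifications into the displayed identity gives (ii), and collecting the optimality statements gives (iii).

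The main obstacle, I expect, is not the modulus theory---the serial and direct-sum rules from \cite{basemodulus} and Theorem \ref{main1} do the heavy lifting---but the two purely matroid-theoretic identifications, especially $M(H)/(E-E_{max}) = M(H/(E-E_{max}))$, which has to be checked carefully against the definition of hypergraph contraction and the rank formula of Theorem \ref{rank}, and which genuinely depends on the partition-connectedness of the $H[V_i]$ established in part (i).
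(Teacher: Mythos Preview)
Your proposal is correct and follows essentially the same route as the paper: both lift to the hypergraphic matroid $M(H)$, invoke Theorem \ref{main1} together with Remark \ref{rem:shrunk-pc} for part (i), and then apply the serial rule \cite[Theorems 3.19 and 4.1]{basemodulus} and identify $M|(E-E_{max})$ and $M/(E-E_{max})$ with the appropriate hypergraphic matroids. You actually supply more detail than the paper does on two points it leaves implicit---the direct-sum split $M|(E-E_{max})=\bigoplus_i M(L_i)$ and the accompanying additivity of $\Mod_2(\widehat{\Ga})$, and the verification that partition-connectedness of the $H[V_i]$ makes matroid contraction agree with hypergraph contraction---so your write-up would stand as a fuller version of the paper's argument.
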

	
		\begin{proof}
			By Remark \ref{rem:shrunk-pc} and Theorem \ref{main1}, we obtain (i). Denote the base family of a matroid $M$ by $\cB(M)$. We recall the serial for the base modulus $\cB(M) = \Ga(H)$ of $M(H)$. Let $\eta^*$ be the optimal density for $\Mod_2(\widehat{\Ga}(H))$. Let $M | (E - E_{max})$ be the restriction of $M(H)$ to $E - E_{max}$. Let $M / (E - E_{max})$ be the contraction of $E -E_{max}$ in $M$.
			By \cite[Theorems 3.19 and 4.1]{basemodulus}, we have 
			\begin{equation}
				\Mod_2 (\widehat{\cB}(M)) = \Mod_2 (\widehat{\cB}(M | (E - E_{max})) + \Mod_2 (\widehat{\cB}(M /(E-E_{max}))).
			\end{equation}
			Moreover, the restriction of $\eta^*$ onto $E - E_{max}$ is optimal for $\Mod_2(\widehat{\cB}(M | (E - E_{max})))$, and the  restriction of $\eta^*$ onto $E_{max}$  is optimal for $\Mod_2(\widehat{\cB}(M / (E - E_{max})))$. 
			Note that the matroid $M / (E - E_{max}))$ is the hypergraphic matroid associated with $H/(E - E_{max})$ because $E_{max} = \delta_{H}(\PP)$ and (i), where $\PP$ is the partition of $V$ where each class of $\PP$ is the vertex set $V_i$ of $L_i$ for $i=1,\dots,k$. We also have that the restriction $M | (E - E_{max})$ is the hypergraphic matroid associated with $H[E- E_{max}]$. Therefore, we achieve (ii) and (iii).
		\end{proof}
	Theorem \ref{thm:emax-serial} uncovers a hierarchical structure within hypergraphs, this is described as follows.

		\textbf{Hypergraph Decomposition Process (HDP):} Let $H$ be a partition-connected hypergraph that is not homogeneous. First, using Theorem \ref{thm:emax-serial}, we decompose $H$ into $H/(E - E_{max})$ and $k$ partition-connected hypergraphs $L_i$ as defined in Theorem \ref{thm:emax-serial}. It follows that $H/(E - E_{max})$ is homogeneous. By Theorem \ref{mod-strength} and Remark \ref{mod-strength-mat}, we have \[\frac{1}{S(H)}=\eta^*_{max} = \frac{1}{S(H/(E - E_{max}))}.\] We proceed to further decompose each hypergraph $L_i$ using the maximum value of $\eta^*$, continuing this decomposition process until we only have a collection of homogeneous hypergraphs. Let $E_{min}$ be the set of elements where $\eta^*(e)$ attains the minimum value of $\eta^*$.
		At the end of this process, we obtain a collection of connected components of $H[E_{min}]$ that are vertex-induced, partition-connected, and homogeneous. We call each of these 
		a {\it homogeneous core}.  Furthermore, each homogeneous core $K$ of $H[E_{min}]$ satisfies
		\[\eta^*_{min} = \frac{1}{S(K)}.\]
		This process yields a collection of subhypergraphs and shrunk hypergraphs which correspond to a hierarchical structure of $H$, see Example \ref{ex:fig2} for an example. 
		
		Next, similar to Theorem \ref{mod-strength}, we have a corresponding theorem for fractional arboricity. Recall the fractional arboricity of the hypergraphic matroid $M=M(H)$ which is defined as:
		\[D(M) = \max \left\{ \frac{|X|}{r(X)} : X \subseteq E, r(X)>0 \right\} .\]
		\begin{theorem}\label{thm:mod:f}
			Let $H$ be a partition-connected hypergraph. Let $M = M(H)$ be the hypergraphic matroid associated with $H$.  Then,
			$D(M(H))= D(H).$
		\end{theorem}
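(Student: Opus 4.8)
\textbf{Proof proposal for Theorem \ref{thm:mod:f}.}

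The plan is to show the two quantities $D(M(H))$ and $D(H)$ are equal by exhibiting a common optimizing set and relating the two ratios through the rank formula of Theorem \ref{rank}. Recall that
\[
D(M(H)) = \max\left\{ \frac{|X|}{r(X)} : X \subseteq E,\ r(X) > 0 \right\}, \qquad D(H) = \max\left\{ \frac{|E[Y]|}{|Y|-1} : Y \subseteq V,\ |Y|\geq 2 \right\}.
\]
First I would prove $D(H) \leq D(M(H))$: given an optimal vertex set $Y$ with $|Y|\geq 2$ realizing $D(H)$, take $X := E[Y]$. Since $H[Y]$ has all its hyperedges inside $Y$, Theorem \ref{rank} (or directly the hyperforest bound) gives $r(E[Y]) \leq |Y| - 1$, and in fact $r(X) \le |V(H[X])| - 1 \le |Y|-1$. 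Hence $\frac{|X|}{r(X)} \geq \frac{|E[Y]|}{|Y|-1} = D(H)$, provided $r(X) > 0$, which holds because $X$ is nonempty and $H$ is loopless (so any single hyperedge has positive rank). This yields $D(M(H)) \geq D(H)$.

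For the reverse inequality $D(M(H)) \leq D(H)$, I would start with a set $X \subseteq E$ attaining the matroid fractional arboricity, so $r(X) > 0$. Apply Theorem \ref{rank} to $X$: there is a partition $\PP = \{W_1,\dots,W_m\}$ of $V$ with $r(X) = |V| - |\PP| + |\delta_X(\PP)|$. Decompose $X$ as the disjoint union $X = \delta_X(\PP) \cup \bigcup_{j} X[W_j]$, where $X[W_j]$ denotes the hyperedges of $X$ entirely inside $W_j$. Discarding the parts $W_j$ that contain no hyperedge of $X$, and noting $|V| - |\PP| = \sum_j (|W_j| - 1)$ over all parts, one gets
\[
\frac{|X|}{r(X)} = \frac{|\delta_X(\PP)| + \sum_j |X[W_j]|}{|\delta_X(\PP)| + \sum_j (|W_j|-1)}.
\]
Now I would use the elementary mediant inequality: a ratio $\frac{\sum a_i}{\sum b_i}$ with $b_i > 0$ is at most $\max_i \frac{a_i}{b_i}$. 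Applied here, $\frac{|X|}{r(X)}$ is bounded above by the maximum of $\frac{|\delta_X(\PP)|}{|\delta_X(\PP)|}=1$ (when the cut is nonempty) and the ratios $\frac{|X[W_j]|}{|W_j|-1}$ over parts $W_j$ with $|W_j|\geq 2$ and $X[W_j]\neq\emptyset$. Each such ratio is at most $\frac{|E[W_j]|}{|W_j|-1} \leq D(H)$, and $1 \le D(H)$ since $H$ is partition-connected (it has a hypertree, so $\theta(H) \ge 1$, whence $D(H)\ge 1$). Therefore $\frac{|X|}{r(X)} \leq D(H)$, completing the proof.

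The main obstacle I anticipate is the bookkeeping in the decomposition step: one must be careful that parts $W_j$ with $|W_j| = 1$ (isolated vertices after the partition) or with $X[W_j] = \emptyset$ contribute $0$ to both the numerator correction and, for singletons, $0$ to $\sum_j(|W_j|-1)$, so they can be harmlessly removed without changing either sum; and one must verify the degenerate case $\delta_X(\PP) = \emptyset$ separately (then $X \subseteq E[W_{j_0}]$ for a single part and the bound is immediate). A secondary subtlety is ensuring $r(X)>0$ propagates correctly and that the mediant inequality is applied only over strictly positive denominators. Once these edge cases are handled, both inequalities combine to give $D(M(H)) = D(H)$.
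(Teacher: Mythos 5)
Your proof is correct, but for the harder inequality $D(M(H))\leq D(H)$ you take a genuinely different and more elementary route than the paper. The paper derives this direction from the modulus machinery: by the base-modulus theory of \cite{basemodulus}, $D(M(H))=1/\eta^*_{min}$, and the Hypergraph Decomposition Process (built on Theorem \ref{main1}) produces a vertex-induced, partition-connected, homogeneous core $K$ of $H[E_{min}]$ with $D(M(H))=|E(K)|/(|V(K)|-1)\leq D(H)$; the reverse inequality is then the same elementary observation $r(X)\leq |V(H[X])|-1$ that you use. Your argument instead invokes only the rank formula of Theorem \ref{rank}: writing $r(X)=|\delta_X(\PP)|+\sum_j(|W_j|-1)$ and $|X|=|\delta_X(\PP)|+\sum_j|X[W_j]|$, the mediant inequality bounds $|X|/r(X)$ by $\max\bigl(1,\max_j |E[W_j]|/(|W_j|-1)\bigr)\leq D(H)$, where $1\leq D(H)$ is exactly where partition-connectedness enters (and it must, as the example of a single $3$-vertex hyperedge shows). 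Your edge-case handling is sound: singleton parts contribute $0/0$ and drop out because $H$ is loopless, parts with $X[W_j]=\emptyset$ have ratio $0$, and when $\delta_X(\PP)=\emptyset$ the positivity $r(X)>0$ forces some part with $|W_j|\geq 2$ and $X[W_j]\neq\emptyset$. What each approach buys: yours is self-contained and independent of the modulus results, so it could be stated before Section \ref{sec:hypertree}; the paper's proof is shorter given the machinery already in place and additionally delivers the identification $D(H)=1/\eta^*_{min}$ together with the structural fact that the minimum level set $E_{min}$ consists of homogeneous cores, which is what the decomposition process actually needs.
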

		\begin{proof}
			We have that the hypertree family $\Ga = \Ga(H)$ of $H$ is the family of bases of the matroid $M(H)$. Let $\eta^*$ be the optimal density of  $\Mod_2(\widehat{\Ga})$. Let $E_{min}$ be the set of elements where $\eta^*(e)$ attains the minimum value $\eta^*_{min}$ of $\eta^*$.
			By the theory of the base modulus for matroids in \cite{basemodulus}, we have \[ D(M(H)) = \frac{1}{\eta_{min}^*}.\]

			By the Hypergraph Decomposition Process of $H$, we have that every connected component of $H[E_{min}]$ is vertex-induced, partition-connected, and homogeneous. Let $K$  be a connected component among these components. Then,
			\begin{align*}
				D(M(H)) =  \frac{|E_{min}|}{r(E_{min})} = \frac{1}{\eta^*_{min}} =\frac{|E(K)|}{r(M(K))} = \frac{|E(K)|}{V(K)-1} \leq D(H).
			\end{align*}
			On the other hand, for all $X \subseteq E$ such that $r(X)>0$ , we have
			\begin{align*}
				D(M(H)) \geq   \frac{|X|}{r(X)} \geq \frac{|X|}{V(H[X])-1}.
			\end{align*}
			Thus, this yields $ D(M(H)) \geq D(H).$ In conclusion,  \[\frac{1}{\eta_{min}^*}=D(M(H))= D(H).\]
		\end{proof}
		Next, we prove Theorem \ref{thm:pc-hom}.
		\begin{proof}[Proof for Theorem \ref{thm:pc-hom}]
			Let  $\Ga = \Ga(H)$ be the hypertree family of $H$. Let $\eta^*$ be the optimal density of  $\Mod_2(\widehat{\Ga})$. Let $M=M(H)$ be the hypergraphic matroid associated with $H$. By the theory of modulus for bases of matroids, $\eta^*$ is a constant vector if and only the strength of $M$ coincides with the fractional arboricity of $M$. By Theorem \ref{mod-strength} and Theorem \ref{thm:mod:f}, the proof is completed.
		\end{proof}
		\begin{remark}\label{re:reverse-thm}
			We also have a reverse version of the serial rule for the modulus of hypertrees in Theorem \ref{thm:emax-serial}, where we can shrink any homegeneous core $K$ and splits the modulus problem as follows:
			\begin{equation}
				\Mod_2 (\widehat{\Ga}(H)) = \Mod_2 (\widehat{\Ga}(K)) + \Mod_2 (\widehat{\Ga}(H/K)).
			\end{equation}
		\end{remark}
		Finally, based on Remark \ref{re:reverse-thm}, we present a reverse version for the Hypergraph Decomposition Process.
		
		\textbf{Hypergraph Shrinking Process (HSP):} The hierarchical structure established by the Hypergraph Decomposition Process can also be uncovered by reversing the process as follows. Note that every homogeneous core $K$ of $H$ in $H[E_{min}]$ is vertex-induced, partition-connected, homogeneous, and satisfies
		\[\frac{1}{D(H)}=\eta^*_{min} = \frac{1}{D(K)}= \frac{1}{S(K)}.\]
		In the first step, we shrink every homogeneous core $K$ of $H$ to get a shrunk hypergraph $H_1$ which is partition-connected. We proceed shrinking with the same strategy for $H_1$ using the minimum value of $\eta^*$, and continue this process until we only have a shrunk hypergraph, denoted by $L$,  which is homogeneous. It follows that $L$ satisfies
		\[\frac{1}{S(H)}=\eta^*_{max} = \frac{1}{S(L)}=\frac{1}{D(L)}.\]
		This process reveals the same collection of subhypergraph and shrunk hypergraphs as in the Hypergraph Decomposition Process.

		\section{Modulus for the multi-tree family}\label{sec:omega}
		In this section, we answer the following questions: 
		What happens when $H$ is not partition-connected? 
		Does it still have any hierarchical structure? 
		Let $H = (V,E)$ be a connected hypergraph that is not 
		necessarily partition-connected. The idea is to build a 
		hypergraph $H^t = (V, E^t)$ from $H$, where $E^t$ is 
		obtained by replacing each hyperedge $e$ in $E$ with a 
		set of $t$ parallel hyperedges. Let $\Ga(H^t)$ be the hypertree family of the hypergraph $H^t$ where usage vectors are indicator vectors. By Remark \ref{re:1-1}, there is a one to one correspondence between the families $\Omega(H)$ and $\Ga(H^t)$.
		Then, we will use results from hypertree modulus in 
		the hypergraphs $H^t$ to study the multi-tree modulus 
		of $\Omega(H)$. The main tool for this strategy is 
		presented in the following section.

		\subsection{Symmetry property for modulus}\label{sec:sym}

		In this section, we provide a symmetry property for modulus.
		 Given a ground set $E$ with weights $ \si \in \R^E_{>0}$. A
		\textit{symmetry} is an involution 
		$Q : E \to E$ such that
		\begin{equation}\label{eq:symmetry}
				\sigma(Q(e)) = \sigma(e) \quad \forall e \in E.
		\end{equation} 
	
		Given a symmetry $Q$, we say that a family of objects $\Gamma$ is $Q$-invariant if for every object 
		$\gamma_1 \in \Gamma$, there is an object $\gamma_2 \in \Gamma$ such that
		\begin{equation}\label{eq:invariant}
			\cN(\gamma_2, Q(e)) = \cN(\gamma_1, e), \quad \forall e \in E.
		\end{equation}
		We set 
		\[
		\Adm_Q(\Gamma) := \{ \rho \in \Adm (\Gamma) : \rho(e) = \rho(Q(e)), \, \forall e \in E \}
		\]
		as the set of $Q$-invariant admissible densities.

		\begin{proposition}\label{pro:symmetry}
			Assume that $\Gamma$ is a family of objects with weights $\si \in \R^E_{\geq 0}$ that is also $Q$-invariant. Then, for $ 1\leq p < \infty,$ 
			\[
			\Mod_{p,\sigma}(\Gamma) = \inf_{\rho \in \Adm_Q(\Gamma)} \, \cE_{p,\sigma}(\rho).
			\]
		\end{proposition}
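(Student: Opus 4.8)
The plan is to prove the only nontrivial inequality,
\[
\inf_{\rho \in \Adm_Q(\Gamma)} \cE_{p,\sigma}(\rho) \;\le\; \Mod_{p,\sigma}(\Gamma),
\]
the reverse being immediate from $\Adm_Q(\Gamma) \subseteq \Adm(\Gamma)$. First I would fix an arbitrary $\rho \in \Adm(\Gamma)$ and introduce the \emph{reflected} density $\rho^Q \in \R^E_{\geq 0}$ given by $\rho^Q(e) := \rho(Q(e))$, together with the \emph{symmetrized} density $\bar{\rho} := \tfrac{1}{2}(\rho + \rho^Q)$. Since $Q$ is an involution, $\bar{\rho}(Q(e)) = \tfrac{1}{2}(\rho(Q(e)) + \rho(e)) = \bar{\rho}(e)$ for all $e$, so $\bar{\rho}$ is automatically $Q$-invariant; it will remain only to check that $\bar{\rho}$ is admissible for $\Gamma$ and that $\cE_{p,\sigma}(\bar{\rho}) \le \cE_{p,\sigma}(\rho)$.

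Next I would show $\rho^Q \in \Adm(\Gamma)$. Given any $\gamma \in \Gamma$, $Q$-invariance supplies a $\gamma' \in \Gamma$ with $\cN(\gamma', Q(e)) = \cN(\gamma, e)$ for all $e \in E$. Re-indexing the defining sum by the bijection $e \mapsto Q(e)$ and using $Q \circ Q = \mathrm{id}$,
\[
\ell_{\rho^Q}(\gamma) = \sum_{e \in E} \cN(\gamma, e)\,\rho(Q(e)) = \sum_{e \in E} \cN(\gamma', Q(e))\,\rho(Q(e)) = \sum_{e \in E} \cN(\gamma', e)\,\rho(e) = \ell_{\rho}(\gamma') \ge 1,
\]
so $\rho^Q \in \Adm(\Gamma)$. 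As $\Adm(\Gamma)$ is convex, $\bar{\rho} = \tfrac{1}{2}(\rho + \rho^Q) \in \Adm(\Gamma)$, and combined with the first paragraph $\bar{\rho} \in \Adm_Q(\Gamma)$.

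For the energy, the same re-indexing together with the weight condition $\sigma(Q(e)) = \sigma(e)$ gives $\cE_{p,\sigma}(\rho^Q) = \sum_{e \in E} \sigma(e)\rho(Q(e))^p = \sum_{e \in E} \sigma(Q(e))\rho(e)^p = \cE_{p,\sigma}(\rho)$. Since $t \mapsto t^p$ is convex on $[0,\infty)$ for $p \ge 1$, for each $e$ one has $\bar{\rho}(e)^p \le \tfrac{1}{2}\rho(e)^p + \tfrac{1}{2}\rho^Q(e)^p$; multiplying by $\sigma(e) \ge 0$ and summing yields $\cE_{p,\sigma}(\bar{\rho}) \le \tfrac{1}{2}\cE_{p,\sigma}(\rho) + \tfrac{1}{2}\cE_{p,\sigma}(\rho^Q) = \cE_{p,\sigma}(\rho)$. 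Combining, $\inf_{\rho' \in \Adm_Q(\Gamma)}\cE_{p,\sigma}(\rho') \le \cE_{p,\sigma}(\bar{\rho}) \le \cE_{p,\sigma}(\rho)$, and taking the infimum over all $\rho \in \Adm(\Gamma)$ finishes the argument. I expect the one delicate point to be the admissibility of $\rho^Q$: one must apply $Q$-invariance to an \emph{arbitrary} $\gamma \in \Gamma$, obtain the corresponding $\gamma'$, and then invoke the involution property, rather than letting $\gamma$ play the role of ``$\gamma_1$'' in the definition and assuming the produced objects ``$\gamma_2$'' exhaust $\Gamma$.
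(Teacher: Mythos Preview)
Your proof is correct and follows essentially the same approach as the paper: reflect an admissible density by $Q$, average to get a $Q$-invariant admissible density, and use convexity of the energy to bound $\cE_{p,\sigma}(\bar\rho)$ by $\cE_{p,\sigma}(\rho)$. The only (minor) difference is that the paper begins by choosing an extremal $\rho$, whereas you work with an arbitrary $\rho\in\Adm(\Gamma)$ and take the infimum at the end; your version is marginally cleaner in that it does not presuppose the infimum is attained.
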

		
		\begin{proof}
			Suppose $\rho$ is extremal for $\Mod_{p,\sigma}(\Gamma)$, 
			so that in particular $\rho \in \Adm(\Gamma)$. Define 
			$\rho_1 := \rho \circ Q$. Then, by \ref{eq:invariant}, for every 
			$\gamma_1 \in \Gamma$, there exists $\gamma_2 \in \Gamma$ 
			such that
			\[
			\ell_{\rho_1}(\gamma_1) = \sum_{e\in E} 
			N(\gamma_1, e)\rho_1(e) = \sum_{e\in E} 
			N(\gamma_2, Q(e))\rho(Q(e)) = \ell_{\rho}(\gamma_2) \geq 1.
			\]
			Therefore, $\rho_1 \in \Adm(\Gamma)$. Moreover, by \ref{eq:symmetry},
			\[
			\mathcal{E}_{p,\sigma}(\rho_1) = \sum_{e\in E} 
			\sigma(e)|\rho_1(e)|^p
			= \sum_{e\in E} \sigma(Q(e))|\rho(Q(e))|^p
			= \mathcal{E}_{p,\sigma}(\rho).
			\]
			
			Define $\rho_2 := \frac{\rho + \rho_1}{2}$. Then, 
			$\rho_2 \in \Adm(\Gamma)$. Also, since $Q$ is an involution, 
			we have \[\rho_2 \circ Q =  \frac{\rho \circ Q+ \rho_1\circ Q}{2} =  \frac{\rho_1+ \rho}{2} =  \rho_2.\] So, 
			$\rho_2 \in \Adm_Q(\Gamma)$. 
			
			By convexity,
			\[
			\mathcal{E}_{p,\sigma}(\rho_2) \leq 
			\frac{\mathcal{E}_{p,\sigma}(\rho) + 
				\mathcal{E}_{p,\sigma}(\rho_1)}{2}
			= \Mod_{p,\sigma}(\Gamma).
			\]
			Therefore,
			\[
			\inf_{\rho \in \Adm_Q(\Gamma)} 
			\mathcal{E}_{p,\sigma}(\rho) \leq 
			\Mod_{p,\sigma}(\Gamma).
			\]
			The other direction follows from the fact that 
			$\Adm_Q(\Gamma) \subset \Adm(\Gamma)$.
			
		\end{proof}

		Let $e$ and $f$ be two distinct elements in $E$. We say 
		that $e$ and $f$ are {\it symmetric} if $\Ga$ is 
		$Q$-invariant for some symmetry $Q$ and $Q(e) = f$. 
		Assume that $P =\{ E_1,E_2,\dots, E_k\}$ is a partition 
		of the ground set $E$ such that all elements in each 
		class of $P$ are symmetric to each other. Then, we 
		define a ground set $E_P$ as the set of classes of $P$, 
		and we enumerate $E_P$ as $\{q_1,\dots,q_k\}$. 
		Then, $|E_P| = |P|$. Let $\Ga_P$ be the family of all 
		vectors $\ga_P \in\R_{\geq 0}^{E_P}$ associated with 
		$\ga \in \Ga$ and defined as  
		\[
		\ga_P(q_i) = \sum\limits_{e \in E_i}\cN(\ga,e)
		\]
		for $i = 1,\dots,k$. 
		We also define weights $\si_P \in \R_{\geq 0}^{E_P}$ 
		with 
		\[
		\si_P(q_i) = \si(e)|E_i|
		\]
		for some $e \in E_i$.
		Using the symmetry property in Proposition \ref{pro:symmetry} for modulus, we obtain that 	
		\[
		\Mod_{p,\sigma}(\Gamma) = \text{Mod}_{p,\sigma_P}(\Gamma_P).
		\]
		Moreover, let $\rho^* \in \R^E_{\geq 0}$ be an optimal density for $\Mod_{p,\sigma}(\Gamma)$. Then, a density $\tilde{\rho}^*$, defined as $\tilde{\rho}^*(q_i) = \rho^*(e)$ for some $e \in E_i$, is optimal for $\Mod_{p,\sigma_P}(\Gamma_P)$.
	Recall that the optimal density  $\eta^*$ for the Fulkerson dual problem of $\Mod_{p,\sigma}(\Gamma)$  satisfies
		
		\[\eta^*(e) = \frac{\si(e)\rho^*(e)^{p-1}}{\text{Mod}_{p,\sigma}(\Gamma)}.\] It follows that the optimal density $\tilde{\eta}^*$ for the Fulkeron dual problem of $\Mod_{p,\sigma_P}(\Gamma_P)$ satisfies 
		\[\tilde{\eta}^*(q_i) = \frac{\si_P(q_i)\tilde{\rho}^*(q_i)^{p-1}}{\text{Mod}_{p,\sigma_P}(\Gamma_P)} = |E_i|\eta^*(e) \text{ for some } e \in E_i.\]
		
		In the following section, we will use these properties to study the modulus of the multi-tree family of hypergraphs.

		\subsection{Modulus for the multi-tree family}
		Let $H = (V,E)$ be a connected hypergraph that is not necessarily partition-connected. Let $\Omega(H)$ be the multi-tree family of $H$. Let $t = |V|$, we recall the hypergraph $ H^t = (V, E^t) $ where $E^t$ is obtained by replacing each hyperedge $e$ in $E$ by a set of $t$ parallel hyperedges.  By Remark \ref{re:1-1}, there is a one to one correspondence between two families $\Omega(H)$ and $\Ga(H^t)$.
		
		For the hypergraph $H^t$, consider a partition $P$ of $E$ where each class of $P$ is the set of $t$ parallel hyperedges in $E(H^t)$ corresponding to each hyperedge $e$ in $E(H)$. The hyperedges in each class are symmetric with respect to the family $\Ga(H^t)$. Hence, the set $E(H)$ can be identified with the set of all classes of $P$ and $\Omega(H)$ is the family of usage vectors in $\R^{E(H)}_{\geq 0} $ associated with the family $\Ga(H^t)$ generated by the partition $P$.
		
		First, let us analyze 1-modulus of the multi-tree family. By the symmetry property for modulus in Section \ref{sec:sym}, it follows that \[t\Mod_1(\Omega) = \Mod_1(\Ga(H^t)).\] 
		By the relationship between modulus of hypertrees and strength of hypergraphs in Theorem \ref{mod-strength}, we have \[\Mod_1(\Ga(H^t)) = S(H^t) =  tS(H).\] Therefore, we obtain that $\Mod_1(\Omega) = S(H)$, which also was shown in (\ref{eq:omega-s}).
		
		Next, we apply the symmetry property of modulus in Section \ref{sec:sym} to deduce results of the 2-modulus of $\Omega(H)$ from the 2-modulus of $\Ga(H^t)$. 
		
		\begin{theorem}\label{thm:main2}
			Let $H$ be a connected hypergraph. Let $\Omega=\Omega(H)$ be the multi-tree family of $H$. Let $\tilde{\eta}^*$ be the optimal density of $\Mod_{2}(\widehat{\Omega})$. Let $E_{max}$ be the set of hyperedges $e$ where $\tilde{\eta}^*(e)$ attains the maximum value $\tilde{\eta}^*_{max}$ of $\tilde{\eta}^*$, and let $E_{min}$ be the set of hyperedges $e$ where $\tilde{\eta}^*(e)$ attains the minimum value $\tilde{\eta}^*_{min}$ of $\tilde{\eta}^*$. Let $S(H)$ and $D(H)$ be the strength and fractional arboricity of $H$. 
			Then, \[ S(H) = \frac{1}{\tilde{\eta}^*_{max}}, \quad D(H) = \frac{1}{\tilde{\eta}^*_{min}}.\] Moreover, $E_{max}$ and $E_{min}$ are optimal for the strength problem and fractional arboricity problem, respectively.
		\end{theorem}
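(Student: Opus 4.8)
The plan is to transport the problem to the auxiliary hypergraph $H^t$ with $t=|V|$, where the machinery of Section~\ref{sec:hypertree} (Theorem~\ref{main1}, Theorem~\ref{mod-strength}, Theorem~\ref{thm:mod:f}, and the Hypergraph Decomposition Process) is available, and then to pull the conclusions back to $H$ through the symmetry collapse of Section~\ref{sec:sym}, in which $\Omega(H)$ is realized as $\Ga(H^t)_P$, the collapse of the hypertree family of $H^t$ along the partition $P$ of $E(H^t)$ into the parallel groups $g_e$. First I would check that $H^t$ is partition-connected, so that the results of Section~\ref{sec:hypertree} apply: for every partition $\PP$ of $V$ with $|\PP|\ge 2$ one has $|\delta_{H^t}(\PP)|=t\,|\delta_H(\PP)|\ge t$, since connectedness of $H$ forces $|\delta_H(\PP)|\ge 1$, while $|\PP|-1\le|V|-1=t-1<t$; hence \eqref{eq:k-p-c} holds with $k=1$, $\Ga(H^t)$ is nonempty, and $M(H^t)$ has full rank $|V|-1$. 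Let $\eta^*_t$ denote the optimal density of $\Mod_2(\widehat{\Ga}(H^t))$; by uniqueness of the $2$-modulus optimizer together with the symmetry among the parallel copies, $\eta^*_t$ is constant on each group $g_e$.

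Next I would use the relations recorded at the end of Section~\ref{sec:sym}. The collapsed family $\Ga(H^t)_P$ equals $\Omega(H)$ on the ground set $E$ with collapsed weights $\sigma_P\equiv t$; since rescaling all weights by the constant $t$ changes neither $\Adm(\Omega)$ nor the optimal $\rho$, and \eqref{eq:weighted-eta-rho} then leaves the dual optimizer unchanged, the optimal density of $\Mod_2(\widehat{\Omega})$ is exactly the vector $\tilde{\eta}^*$ produced by the collapse, and it satisfies $\tilde{\eta}^*(e)=t\,\eta^*_t(e')$ for every $e'\in g_e$. Consequently $\tilde{\eta}^*_{max}=t\,(\eta^*_t)_{max}$, $\tilde{\eta}^*_{min}=t\,(\eta^*_t)_{min}$, and under $P$ the sets $E_{max}$ and $E_{min}$ correspond to the set $E^t_{max}$ where $\eta^*_t$ is maximal and the set $E^t_{min}$ where it is minimal.

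I would then apply the hypertree results to $H^t$. By Remark~\ref{mod-strength-mat} and Theorem~\ref{mod-strength}, $(\eta^*_t)_{max}=1/s(M(H^t))=1/S(H^t)$, and by Theorem~\ref{thm:mod:f}, $(\eta^*_t)_{min}=1/D(M(H^t))=1/D(H^t)$. Since replacing each hyperedge of $H$ by $t$ parallel copies multiplies both $|\delta_H(\PP)|$ and $|E[X]|$ by $t$, definitions \eqref{eq:S}--\eqref{eq:D} give $S(H^t)=tS(H)$ and $D(H^t)=tD(H)$; combining these,
\[
\frac{1}{\tilde{\eta}^*_{max}}=\frac{1}{t\,(\eta^*_t)_{max}}=\frac{S(H^t)}{t}=S(H),\qquad \frac{1}{\tilde{\eta}^*_{min}}=\frac{1}{t\,(\eta^*_t)_{min}}=\frac{D(H^t)}{t}=D(H).
\]
For the optimality of the sets, Theorem~\ref{main1} gives $E^t_{max}=\delta_{H^t}(\PP)$ for a feasible partition $\PP=\{V_1,\dots,V_k\}$ of $V$ with each $H^t[V_i]$ partition-connected, whence $E_{max}=\delta_H(\PP)$; since each $H[V_i]$ is connected (feasibility is a connectivity condition, unaffected by parallel copies), contracting $E\setminus E_{max}=\bigcup_i E[V_i]$ collapses $V$ to the classes of $\PP$, so $\theta\big(H/(E\setminus E_{max})\big)=|E_{max}|/(|\PP|-1)=S(H)$ and $\PP$ attains the minimum in \eqref{eq:S}. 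Likewise, running the Hypergraph Decomposition Process on $H^t$ shows that each connected component of $H^t[E^t_{min}]$ is a vertex-induced subhypergraph $H^t[W_j]$ with $\theta(H^t[W_j])=D(H^t)$, so the corresponding $H[W_j]$ satisfies $\theta_H(E[W_j])=D(H)$; thus every component of $H[E_{min}]$ attains the maximum in \eqref{eq:D}, which is the sense in which $E_{min}$ is optimal for the fractional arboricity problem.

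The analytic content is carried entirely by Section~\ref{sec:hypertree}; I expect the main obstacle to be the bookkeeping in the symmetry step --- verifying that rescaling the weights by the constant $t$ does not move the optimizers of $\Mod_2(\Omega)$ or of $\Mod_2(\widehat{\Omega})$, and tracking how the maximal-optimal-set statements for the matroid strength and fractional arboricity of $M(H^t)$ descend through the parallel-group collapse to the partition description $\delta_H(\PP)$ and the vertex-induced-subhypergraph description for $H$.
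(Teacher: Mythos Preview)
Your proposal is correct and follows essentially the same route as the paper: pass to $H^t$ with $t=|V|$, use the symmetry collapse of Section~\ref{sec:sym} to identify $\Omega(H)$ with the parallel-group collapse of $\Ga(H^t)$ and obtain $\tilde\eta^*(e)=t\,\eta^*_t(e')$, then invoke Theorem~\ref{main1}, Theorem~\ref{mod-strength}/Remark~\ref{mod-strength-mat}, and Theorem~\ref{thm:mod:f} together with the scaling $S(H^t)=tS(H)$, $D(H^t)=tD(H)$. You supply a few details the paper leaves implicit (the partition-connectedness of $H^t$, the invariance of the optimizers under the constant weight rescaling, and a more explicit treatment of why $E_{min}$ is optimal via the HDP), but the architecture is identical.
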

		\begin{proof}
			Let $t = |V|$, we recall the hypergraph $ H^t = (V, E^t) $ where $E^t$ is obtained by replacing each hyperedge $e$ in $E$ by a set of $t$ parallel hyperedges. For the hypergraph $H^t$, consider a partition $P$ of $E(H^t)$ where each class of $P$ is the set of $t$ parallel hyperedges in $E(H^t)$ corresponding to each hyperedge $e$ in $E(H)$. Note that every class of  $P$ has size $t$, by the symmetry property of modulus, we obtain
			\[
			t\Mod_{2}(\Omega(H)) = \Mod_{2}(\Ga(H^t)).
			\]
			Moreover, let $\eta^*$ be the optimal density for $\Mod_{2}(\widehat{\Gamma}(H^t))$. It follows that the optimal density $\tilde{\eta}^*$ of $\Mod_{2}(\widehat{\Omega})$ satisfies \[\tilde{\eta}^*(e) =  t\eta^*(e_1),\] for every $e \in E$ and for some copy $e_1$ in $E(H^t)$ of $e$.
			
			Now, we apply the modulus for the hypertree family $\Ga(H^t)$ as follows. 
			Apply Theorem \ref{main1} to $\Ga(H^t)$ and noting that $E_{max}$ corresponds to hyperedges in $H^t$ that attain the maximum value of $\eta^*$, we obtain  \[E_{max} = \delta_{H}(\PP),\]  for some feasible partition $\PP = \{V_1,\dots,V_k\}$ of $V(H)$ in the hypergraph $H$. Moreover, \[tS(H) =S(H^t) = \frac{1}{\eta^*_{max}} = \frac{t}{\tilde{\eta}^*_{max}}.\] Hence, \[S(H) = 1/\tilde{\eta}^*_{max},\] and $E_{max}$ is optimal for the strength problem $S(H)$.
			Similarly, by Theorem \ref{thm:mod:f}, we also have \[D(H) = 1/\tilde{\eta}^*_{min},\] and $E_{min}$ is optimal for the fractional arboricity problem $D(H)$.
		\end{proof}

		\begin{proof}[Proof for Theorem \ref{thm:c-hom}] As a corollary of Theorem \ref{thm:main2}, the  optimal density $\tilde{\eta}^*$ of $\Mod_{2}(\widehat{\Omega})$ is a constant vector if and only if the strength of $H$ coincides with the fractional arboricity of $H$.
		\end{proof}
		
		We can also deduce a serial rule for 2-modulus of $\Omega(H)$ from the serial rule for 2-modulus of $\Ga(H^t)$ in Theorem \ref{thm:emax-serial}, this is deduced in the same manner as the proof of Theorem \ref{thm:main2}. 

		\begin{theorem}\label{thm:omega-emax-serial}
			Let $H$ be a connected hypergraph. Let $\Omega =\Omega(H)$ be the multi-tree family of $H$. Assume that the optimal $\tilde{\eta}^*$ of $\Mod_2(\widehat{\Omega})$ is not a constant vector, and let $E_{max}$ be the set of hyperedges $e$ where $\tilde{\eta}^*(e)$ attains the maximum value $\tilde{\eta}^*_{max}$ of $\tilde{\eta}^*$.  Let $H/E_{max}$ be the hypergraph obtained from $H$ by contracting $E_{max}$. Let the hypergraph $H[E- E_{max}]$ have $k$ connected components $L_1,\dots,L_k$ with $L_i=(V_i,C_i)$, for $i=1,\dots,k$. Then, 
			\bi 
			\item[ (i)] The modulus problem $\Mod_2 (\widehat{\Omega}(H))$ splits as follows:\begin{equation}
				\Mod_2 (\widehat{\Omega}(H)) = \Mod_2 (\widehat{\Omega}(L_1)) + \Mod_2 (\widehat{\Omega}(L_2)) +\dots + \Mod_2 (\widehat{\Omega}(L_k))+ \Mod_2 (\widehat{\Omega}(H/E_{max}));
			\end{equation}
			\item[ (ii)] The restriction of $\tilde{\eta}^*$ onto $C_i$ is optimal for $\Mod_2(\widehat{\Omega}(L_i)) $ for $i = 1,2,\dots,k$, and the  restriction of $\tilde{\eta}^*$ onto $E_{max}$  is optimal for $\Mod_2(\widehat{\Omega}(H/E_{max}))$. 
			\ei
		\end{theorem}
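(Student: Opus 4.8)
The plan is to reduce Theorem \ref{thm:omega-emax-serial} to the already-established serial rule for the hypertree family, Theorem \ref{thm:emax-serial}, by passing to the auxiliary hypergraph $H^t$ with $t=|V|$ and invoking the symmetry machinery of Section \ref{sec:sym}. Recall from Remark \ref{re:1-1} that $\Omega(H)$ is in one-to-one correspondence with $\Ga(H^t)$, and that under the partition $P$ of $E(H^t)$ into parallel groups $g_e$ (each of size $t$), the family $\Ga(H^t)$ is $Q$-invariant for every symmetry $Q$ permuting copies within a group, with $\Omega(H) = \Ga(H^t)_P$ in the notation of Section \ref{sec:sym}. Thus $t\Mod_2(\widehat{\Omega}(H)) = \Mod_2(\widehat{\Ga}(H^t))$ wait --- more precisely, by the symmetry property, $t\Mod_2(\Omega(H)) = \Mod_2(\Ga(H^t))$ and, via Fulkerson duality \eqref{eq:mod2}, the optimal density $\eta^*$ of $\Mod_2(\widehat{\Ga}(H^t))$ and the optimal density $\tilde\eta^*$ of $\Mod_2(\widehat{\Omega}(H))$ are related by $\tilde\eta^*(e) = t\,\eta^*(e_1)$ for any copy $e_1 \in g_e$. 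This was already recorded in the proof of Theorem \ref{thm:main2}.

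With this dictionary in hand, the argument proceeds as follows. First, I would observe that $\eta^*$ is constant on each parallel group $g_e$ (a consequence of the symmetry / Proposition \ref{pro:symmetry}), so $\tilde\eta^*$ is not constant if and only if $\eta^*$ is not constant; moreover the set $E_{max}^{H^t} \subseteq E(H^t)$ on which $\eta^*$ attains its maximum is exactly the union $\bigcup_{e\in E_{max}} g_e$ of the parallel groups over hyperedges $e\in E_{max}$ of $H$. Next, I would apply Theorem \ref{thm:emax-serial} to the partition-connected hypergraph $H^t$ --- note $H^t$ is partition-connected since $H$ is connected, by the construction with $t=|V|$ parallel copies, so Theorem \ref{thm:emax-serial} applies. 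It gives $E_{max}^{H^t} = \delta_{H^t}(\PP)$ for a feasible partition $\PP$, the connected components $L_1^t,\dots,L_k^t$ of $H^t[E(H^t)\setminus E_{max}^{H^t}]$ are partition-connected, and
\[
\Mod_2(\widehat{\Ga}(H^t)) = \sum_{i=1}^k \Mod_2(\widehat{\Ga}(L_i^t)) + \Mod_2(\widehat{\Ga}(H^t/(E(H^t)\setminus E_{max}^{H^t}))),
\]
with the corresponding restrictions of $\eta^*$ optimal for each summand.

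Then I would translate each term back through the $t$-fold cover. Since $E(H^t)\setminus E_{max}^{H^t} = \bigcup_{e\notin E_{max}} g_e$, the components $L_i^t$ are precisely the $t$-fold parallel blow-ups $L_i^t = (L_i)^{t}$ of the components $L_i = (V_i,C_i)$ of $H[E\setminus E_{max}]$, where one checks the vertex sets match because removing whole parallel groups cannot change which vertices are joined. Likewise $H^t/(E(H^t)\setminus E_{max}^{H^t})$ is the $t$-fold blow-up of $H/E_{max}$, using that contracting $\bigcup_{e\notin E_{max}} g_e$ in $H^t$ has the same effect on the vertex set as contracting $E\setminus E_{max}$ wait, as contracting $E_{max}$'s complement --- i.e.\ it equals $(H/(E\setminus E_{max}))^t$, and since $E_{max}=\delta_H(\PP)$ this in turn equals $(H/E_{max}\ \text{restricted appropriately})$; more carefully, $H^t$ contracted along the complement of $E_{max}^{H^t}$ has hyperedge set $E_{max}^{H^t} = \bigcup_{e\in E_{max}}g_e$, which is the $t$-blow-up of the hyperedge set $E_{max}$ of $H/(E\setminus E_{max})$, and the latter hypergraph is exactly $H/E_{max}$ after the isolated-vertex bookkeeping. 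Applying $t\Mod_2(\Omega(G)) = \Mod_2(\Ga(G^t))$ to each of $G = H, L_1,\dots,L_k, H/E_{max}$ and dividing the displayed identity by $t$ yields part (i); applying the relation $\tilde\eta^*(e)=t\eta^*(e_1)$ together with the optimality statements of Theorem \ref{thm:emax-serial}, restricted to each piece, yields part (ii).

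The main obstacle I anticipate is the bookkeeping that identifies $L_i^t$ with $(L_i)^t$ and $H^t/(E(H^t)\setminus E_{max}^{H^t})$ with $(H/E_{max})^t$ --- specifically making sure that the contraction and parallel-blow-up operations commute, that deleting or contracting whole parallel groups never merges or splits vertices in a way that differs from the analogous operation on $H$, and that the discarded isolated vertices and loops are handled consistently so that strength, fractional arboricity, and the modulus values transfer cleanly. Once that commutation is nailed down, everything else is a direct consequence of Theorem \ref{thm:emax-serial} and the symmetry dictionary, exactly as in the proof of Theorem \ref{thm:main2}. Indeed, since the proof of Theorem \ref{thm:main2} already carried out the $E_{max}=\delta_H(\PP)$ identification, the cleanest exposition is simply to say that Theorem \ref{thm:omega-emax-serial} follows from Theorem \ref{thm:emax-serial} by the same reduction to $H^t$ used in the proof of Theorem \ref{thm:main2}, spelling out only the blow-up compatibility of the three hypergraph operations involved.
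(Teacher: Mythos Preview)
Your approach is exactly the one the paper takes: the paper's entire proof is the single sentence that the serial rule for $\Omega(H)$ ``is deduced in the same manner as the proof of Theorem \ref{thm:main2}'' from Theorem \ref{thm:emax-serial} applied to $H^t$, so your reduction via the symmetry dictionary and the $t$-fold blow-up is precisely what is intended. Your explicit bookkeeping on the commutation of blow-up with deletion and contraction (and your observation that the statement's ``$H/E_{max}$'' must really mean contracting the \emph{complement} $E\setminus E_{max}$, as in Theorem \ref{thm:emax-serial}) goes further than the paper itself, which leaves all of that implicit.
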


		\textbf{Hypergraph decomposition process:} 
		Let $H$ be a connected hypergraph that is not homogeneous. 
		The hypergraph decomposition process for the 
		partition-connected hypergraph $H^t$ corresponds to a 
		hypergraph decomposition process for the connected 
		hypergraph $H$ by scaling. 
		This yields a hierarchical structure of $H$.

		\begin{proof}[Proof for Theorem \ref{thm:mod-mod}]
			Theorem \ref{thm:main2} and the hypergraph decomposition process of $H$ imply that if $H$ is partition-connected, $\Mod_{2}(\widehat{\Omega})$ and $\Mod_{2}(\widehat{\Ga})$ have the same optimal density and they reveal the same hierarchical structure of $H$.
		\end{proof}
		
		\begin{remark}
			If $H$ is not partition-connected, the hierarchical structure of $H$ is not necessarily related to the hierarchical structure of $M(H)$. Here is an example: $H=(V=\{v_1, v_2, v_3,v_4\}, E=\{e_1,e_2\})$ where
			$e_1=\{v_1, v_2, v_3\}$, $e_2=\{ v_3,v_4\}$. This remark is similar to Remark \ref{re:s-neq-s}.
		\end{remark} 
		

		\bibliographystyle{acm}
		\bibliography{paper.bib}
		\def\cprime{$'$}
		\nocite{*}

	\end{document}